\setlist[enumerate]{leftmargin=.5in}
\setlist[itemize]{leftmargin=.5in}
\crefname{hypothesis}{Hypothesis}{Hypotheses}
\title{The Practicality of Stochastic Optimization in Imaging Inverse Problems\thanks{October 2019. Some preliminary results of this work was published in \cite{tang2019limitation}.}}
\author{Junqi Tang\thanks{School of Engineering, University of Edinburgh, UK
  (\email{J.Tang@ed.ac.uk}).}
  \and Karen Egiazarian\thanks{Noiseless Imaging Ltd, Finland
  (\email{karen.egiazarian@noiselessimaging.com}).}
    \and Mohammad Golbabaee\thanks{Department of Computer Science, University of Bath
  (\email{mg2105@bath.ac.uk}).}
\and Mike Davies\thanks{School of Engineering, University of Edinburgh, UK
  (\email{Mike.Davies@ed.ac.uk}).}  
  }
\newcommand{\X}{\mathcal{X}}
\newcommand{\A}{\mathcal{A}}
\newcommand{\E}{\mathbb{E}}
\let \oldsection \section
\renewcommand{\section}{\vspace{3ex plus 1ex}\oldsection}
\newcommand{\BEAS}{\begin{eqnarray*}}
	\newcommand{\EEAS}{\end{eqnarray*}}
\newcommand{\BEA}{\begin{eqnarray}}
\newcommand{\EEA}{\end{eqnarray}}
\newcommand{\BEQ}{\begin{equation}}
\newcommand{\EEQ}{\end{equation}}
\newcommand{\BIT}{\begin{itemize}}
	\newcommand{\EIT}{\end{itemize}}
\newcommand{\BNUM}{\begin{enumerate}}
	\newcommand{\ENUM}{\end{enumerate}}
\newcommand{\BA}{\begin{array}}
	\newcommand{\EA}{\end{array}}
 \numberwithin{dummy}{section}
\newcommand{\mr}{\mathrm}
\newcommand{\mb}{\mathbb}
\newcommand{\mc}{\mathcal}
\begin{document}

\maketitle

 \begin{abstract}
 In this work we investigate the practicality of stochastic gradient descent and recently introduced variants with variance-reduction techniques in imaging inverse problems. Such algorithms have been shown in the machine learning literature to have optimal complexities in theory, and provide great improvement empirically over the deterministic gradient methods. Surprisingly, in some tasks such as image deblurring, many of such methods fail to converge faster than the accelerated deterministic gradient methods, even in terms of epoch counts. We investigate this phenomenon and propose a theory-inspired mechanism for the practitioners to efficiently characterize whether it is beneficial for an inverse problem to be solved by stochastic optimization techniques or not.  Using standard tools in numerical linear algebra, we derive conditions on the spectral structure of the inverse problem for being a suitable application of stochastic gradient methods. Particularly, we show that, for an imaging inverse problem, if and only if its Hessain matrix has a fast-decaying eigenspectrum, then the stochastic gradient methods can be more advantageous than deterministic methods for solving such a problem. Our results also provide guidance on choosing appropriately the partition minibatch schemes, showing that a good minibatch scheme typically has relatively low correlation within each of the minibatches. Finally, we propose an accelerated primal-dual SGD algorithm in order to tackle another key bottleneck of stochastic optimization which is the heavy computation of proximal operators. The proposed method has fast convergence rate in practice, and is able to efficiently handle non-smooth regularization terms which are coupled with linear operators. 
\end{abstract}

\begin{keywords}
   Imaging Inverse Problems, Stochastic Optimization, Large-scale Optimization
\end{keywords}

\section{Introduction}

Stochastic gradient-based optimization algorithms have been ubiquitous in real world applications which involve solving large-scale and high-dimensional optimization tasks, particularly in the field of machine learning \cite{bottou2010large}, due to their scalability to the size of the optimization problems. In this work we study the practicality of stochastic gradient-based optimization algorithms in imaging inverse problems, which are also large-scale and high-dimension by nature. The class of problems we consider, with typical examples including image deblurring, denoising, inpainting, superresolution, demosaicing, tomographic image reconstruction, etc, can be generally formulated as the following:
\begin{equation}\label{obj}
   x^*  \in  \arg \min_{x \in \mathcal{X}} \left\{F(x) :=  \frac{1}{n}\sum_{i = 1}^n f_i(x) + \lambda g(x)\right\},
\end{equation}
where $\X \subseteq \mathbb{R}^d$ is a convex set and we denote by $f(x) = \frac{1}{n}\sum_{i = 1}^n f_i(x) := \frac{1}{n}\sum_{i = 1}^n \Bar{f}(a_i, b_i , x)$ the data fidelity term. We assume each $f_i(x) := \Bar{f}(a_i, b_i , x)$ to be proper, convex and smooth. In the classical setting of supervised machine learning, the variable $x$ contains the parameters of a classifier, while $A = [a_1; a_2; ...; a_n] \in \mb{R}^{n \times d}$ represents the features of training data samples, and $b = [b_1; b_2; ... ;b_n] \in \mb{R}^d$ denotes the corresponding labels. In the imaging inverse problems we are interested in this work, the variable $x$ represents the vectorized image, while $A = [a_1; a_2; ...; a_n]$ represent the forward model, and $b = [b_1; b_2; ... ;b_n]$ denotes the observations. To be more specific, we denote here a noisy linear measurement model with a ground-truth vectorized image $x^\dagger$ which is to be estimated, an $n$ by $d$ matrix $A$ which denotes the measurement operator, additive noise denoted by vector $w$, and the noisy measurement data denoted by vector $b \in \mb{R}^{n \times 1}$:
\begin{equation}\label{eq:1}
b = A x^\dagger + w, \ \ \ A \in \mb{R}^{n \times d}
\end{equation}
One of the most typical examples of the data fidelity term in imaging inverse problems is the least-squares loss:
\begin{equation}
    f(x) = \frac{1}{n} \sum_{i = 1}^n \frac{1}{2} (a_i^T x - b_i)^2= \frac{1}{2n}\|Ax - b\|_2^2,
\end{equation}
while we typically obtain a robust estimator of $x^\dagger$ via jointly minimizing the least-squares data-fidelity term with a structure-inducing regularization $g(\cdot)$ which encodes prior information we have regarding $x^\dagger$:
\begin{equation}\label{reg_ls}
    x^\star \in \arg \min_{x \in \X} \left\{ f(x) + \lambda g(x)\right\},\ \ \ f(x) := \frac{1}{2n}\|Ax - b\|_2^2
\end{equation}
The regularization term $g(x)$ is assumed to be a proper convex function and is possibly non-smooth. In imaging inverse problems, the most commonly used types of regularization are essentially sparsity-inducing norm penalty on either synthesis domain or analysis domain, with representative examples being the $\ell_1$ regularization on wavelet coefficients, and the total-variation (TV) regularization \cite{chambolle2016introduction}.

Traditionally, the imaging inverse problems are solved most often by minimizing the regularized least-squares via the deterministic first-order solvers, such as the proximal gradient descent \cite{lions1979splitting}, and its accelerated \cite{beck2009fast,liang2018improving} and primal-dual variants \cite{chambolle2011first,chambolle2016ergodic}. The iterates of the proximal gradient descent for solving (\ref{reg_ls}) can be written as:
 \begin{eqnarray*}
 && \mathrm{\textbf{Proximal gradient descent}} - \mathrm{Initialize}\ x^0 \in \X\\
 &&\mathrm{For} \ \ \ i = 0, 1, 2,...,  K\\
&&\left\lfloor
\begin{array}{l}
x^{i+1}=\mathrm{prox}_{\lambda g}^\eta [x^{i}-\eta \cdot \triangledown f(x^i)]
\end{array}
\right.
 \end{eqnarray*}
For least-squares data-fidelity term $\triangledown f(x^i) = \frac{1}{n}A^T(Ax^{i}-b)$. We denote the proximal operator as:
\begin{equation}
    \mathrm{prox}_{\lambda g}^\eta (\cdot) = \arg \min_{x \in \X} \frac{1}{2\eta} \|x - \cdot \|_2^2 + \lambda g(x).
\end{equation}

Stochastic gradient descent methods \cite{robbins1951stochastic,nemirovski1978cesari, nemirovsky1983problem,bottou2010large}, which is based on randomly selecting one or a few function $f_i(x)$ in each iteration and compute an efficient unbiased estimate of the full gradient $\triangledown f(x)$ and perform the descent step, by nature are the ideal solvers for the generic composite optimization task (\ref{obj}), including the regularized least-squares (\ref{reg_ls}). These type of methods are able to achieve scalablity to large-scale problems compared to the deterministic gradient methods in many modern machine learning applications.

In recent years, researchers have developed several advanced variants of stochastic gradient methods, namely, the variance-reduced stochastic gradient methods \cite{schmidt2013minimizing, johnson2013accelerating, defazio2014saga, xiao2014proximal}. In each iteration of these new stochastic algorithms, more delicated stochastic gradient estimator is computed, which can reduce the variance of the stochastic gradient estimator progressively, with small computational or storage overheads, and hence significantly improve the convergence rate of stochastic gradient methods. Most recently, with further combining the variance-reduction technique with the Nesterov's momentum acceleration technique which was originally designed to accelerate the deterministic gradient methods \cite{nesterov1983method, nesterov2007gradient}, researchers \cite{nitanda2014stochastic, lan2015optimal, allen2016katyusha, murata2017doubly} have developed several \say{optimal} stochastic gradient algorithms which can provably achieve the worse-case optimal convergence rate for (\ref{obj}).

While having been a proven success both in theory and in machine learning applications, there are few convincing results so far in the literature which report the performance of the stochastic gradient methods in imaging applications (with the possible exception of tomographic reconstruction \cite{karimi2016hybrid,karimi2017sparse,chambolle2018stochastic}). Can stochastic gradient methods significantly facilitate inverse problems as they did for machine learning? If not, why might stochastic optimization be inefficient for some inverse problems?  How can we understand such failures? How could we help practitioners to characterize whether a given inverse problem is suitable for stochastic gradient methods or not? This work is aimed at answering these questions in a systematic way.

\subsection{Highlights of this work}

We make the following contributions:

 \subsubsection{A metric for predicting stochastic acceleration}

 We first report surprisingly negative results of stochastic gradient methods in solving a space-varying image deblurring problem, which go against the conventional wisdom and common believe of the large-scale optimization and machine learning community. The first step of this work is to find out the key factor which determines the success or failure of stochastic gradient methods to be more advantageous than their deterministic counterparts for an imaging inverse problem. We start by a motivational analysis from known upper and lower complexity bounds for solving (\ref{obj}), demonstrating that in the worst case the acceleration given by stochastic gradient methods in terms of objective-gap convergence is dominated by this ratio. In the context of imaging inverse problem, it is more desirable to further study whether the acceleration provided by stochastic gradient methods in terms of estimation-error convergence is also dominated by this ratio. To show this, we provide a novel analysis for the estimation-error convergence rate of minibatch proximal SGD in solving linear inverse problems with regularization constraints, under expected smoothness \cite{gower2018stochastic} and restricted strong-convexity \cite{agarwal2012fast,2015_Oymak_Sharp} condition. By comparing our result for minibatch proximal SGD with the deterministic proximal gradient descent in the same setting, we can confirm that this ratio of Lipschitz constants is indeed the key factor which can be used to characterize whether a given inverse problem is suitable or not for applying stochatic gradient methods. Hence we find strong theoretical evidences, that the computational speedup which stochastic gradient methods can bring over their deterministic counterparts, is dominantly related to the ratio of the Lipschitz constants of the full gradient and the minibatch stochastic gradients.

 Based on our theoretical analysis, we propose to evaluate the limit of possible acceleration of a stochastic gradient method over its full gradient counterpart by measuring the \textit{Stochastic Acceleration} (SA) factors which are based on the ratio of the Lipschitz constants of the minibatched stochastic gradient and the full gradient. We also discover that the SA factors are able to characterize the benefits of using randomized optimization techniques, and that not all imaging problems have large SA factors. 
 
 \subsubsection{Understanding the relationship between the structure of inverse problems and stochastic acceleration} An immediate and crucial question to be answered is, \say{what types of inverse problems favor stochastic gradient algorithms?}. We provide tight and insightful lower and upper bounds for the stochastic acceleration factors for partition minibatch schemes, using standard tools in numerical linear algebra \cite{horn2012matrix}. Our lower bound results suggest that the SA factor we propose is directly related the ratio $\frac{\max_{i \in [n]} \|a_i\|_2^2}{\|A\|^2}$, which can be efficiently evaluated by practitioners. This ratio $\frac{\max_{i \in [n]} \|a_i\|_2^2}{\|A\|^2}$ is also directly related to the eigenspectrum of the Hessian matrix, when the measurements are relatively balanced, i.e. $\frac{\max \|a_i\|_2^2}{\frac{1}{n}\sum_{j \in [n]}\|a_j\|_2^2} = O(1)$, which is generally true for most of imaging inverse problems:

 \vspace{\baselineskip} 
 
 \noindent\fbox{%
    \parbox{.95\textwidth}{%
         {\it If such an inverse problem's Hessian matrix has a fast decaying eigenspectrum, then it can be guaranteed to have large SA factors, and hence can be characterized as a suitable application for stochastic gradient methods.}
    }%
}
 \vspace{\baselineskip}

And vice-versa: if such an inverse problems's Hessian matrix has a slowly-decaying eigenspectrum, then it is guaranteed to have small SA factors and can be deemed as unsuitable for stochastic gradient methods, not matter how delicately we partition the data.
 
While the spectral properties of the forward operator fundamentally controls the suitability of stochastic proximal gradient methods for an inverse problem, we know that for some inverse problems, different choices of partition can lead to different convergence rates for stochastic gradient algorithms in practice. One of our lower bounds for SA factors demonstrates that:

 \vspace{\baselineskip} 
 
 \noindent\fbox{%
    \parbox{.95\textwidth}{%
{\it If a partition scheme generates minibatches which have low local coherence structure, i.e. the measurements within minibatches are less correlated to each other, then it is superior to other partition schemes which have high local coherence structure}.}
}

 \vspace{\baselineskip} 
 
The SA factors and the lower bounds we propose provide for the practitioners efficient ways to check whether they should use stochastic proximal gradient techniques or classical deterministic proximal gradient methods to solve a given inverse problem, and also compare between different partition minibatch schemes and choose the best one among them in practice.

\subsubsection{Breaking the computational bottleneck of expensive/multiple proximal operators for momentum SGD} Another factor in imaging applications which significantly affects the SGD-type methods' actual performance is the frequent calculation of the costly proximal operators for the regularization terms, such as the TV semi-norm -- SGD methods need to calculate these much more frequently than full gradient methods. Moreover most of the fast SGD methods can not cope with more than one non-smooth regularization term \cite{zhao2018stochastic}. To overcome these issues we propose an accelerated primal-dual SGD (Acc-PD-SGD) algorithm based on the primal-dual hybrid gradient framework \cite{chambolle2011first,chambolle2016ergodic,zhao2018stochastic}, as a side-contribution. The proposed Acc-PD-SGD algorithm is able to efficiently handle (1) regularization with a linear operator, (2) multiple regularization terms,  while (3) maintaining Nesterov-type accelerated convergence speed in practice.

\subsection{Outline} Now we set out the rest of the paper. We start by presenting in section 2 a surprising negative result of state-of-the-art stochastic gradient methods in a space-varying image deblurring task. In section 3 we describe our notations and definitions which will be frequently used throughout the paper. Then in section 4, we provide theoretical analysis regarding the limitation of stochastic optimization algorithms, and particularly our novel analysis of minibatch SGD and the theory-inspired SA factors. In section 4, we also present bounds for the SA factors with respect to the spectral property of the forward operator, and hence derive a condition for an inverse problem to be a suitable application of stochastic gradient methods. In section 5 and 6, we present the accelerated primal-dual SGD algorithm and the numerical experiments. Final remarks appear in section 7, while we include the proofs of our theoretical results in the appendix.

\section{A motivating example}

	Image deblurring is an important type of imaging inverse problems and has been studied intensely during the recent decades.	For uniform deblurring, due to the cyclic structure of the deconvolution, FFT-based  ADMM\footnote{The computationally demanding sub-problems of {\it alternating direction method of multipliers} (ADMM) in this case can be solved with an efficient matrix inversion by FFT due to the cyclic structure of the uniform deconvolution.} variants have shown to be remarkably efficient \cite{afonso2010fast,almeida2013deconvolving, almeida2013blind} when compared to classic gradient-based solvers such as FISTA \cite{beck2009fast}. Such techniques, although being computationally efficient, are specifically tailored to a restricted range of problems where the observation models are diagonalizable by a DFT. For image deblurring, it is often not realistic to assume that imaging devices induce a uniform blur \cite{whyte2012non}. If the blurring is different across the image, then the efficient implementation of ADMM is not applicable in general. Then standard ADMM and deterministic gradient methods such as FISTA can be computationally expensive. It is therefore natural to ask: can stochastic gradient methods offer us a more efficient solution?

\begin{figure}[t] 
	\centering	
    \includegraphics[width=.85\textwidth]{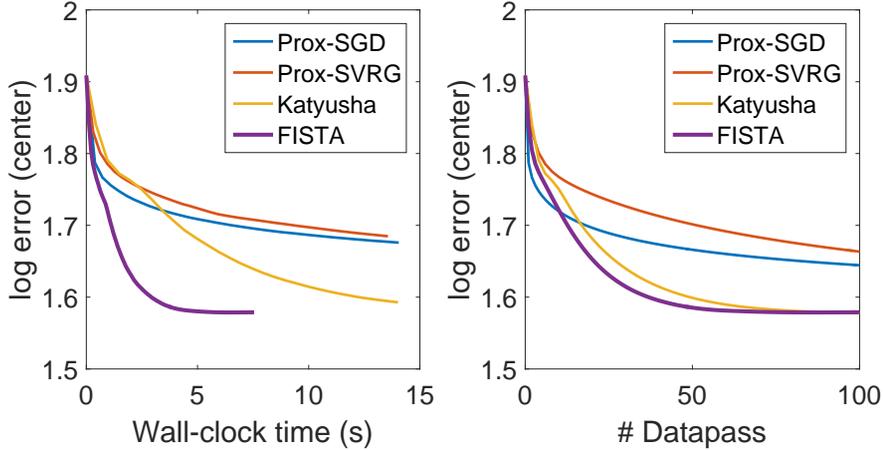}
	\caption{The estimation error plot for the deblurring experiment. The plots correspond to the estimation error of the central part (226 by 226) of the image.}
	\label{fig:1}
\end{figure}

\begin{figure}[t] 
	\centering	
    \includegraphics[width=.87\textwidth]{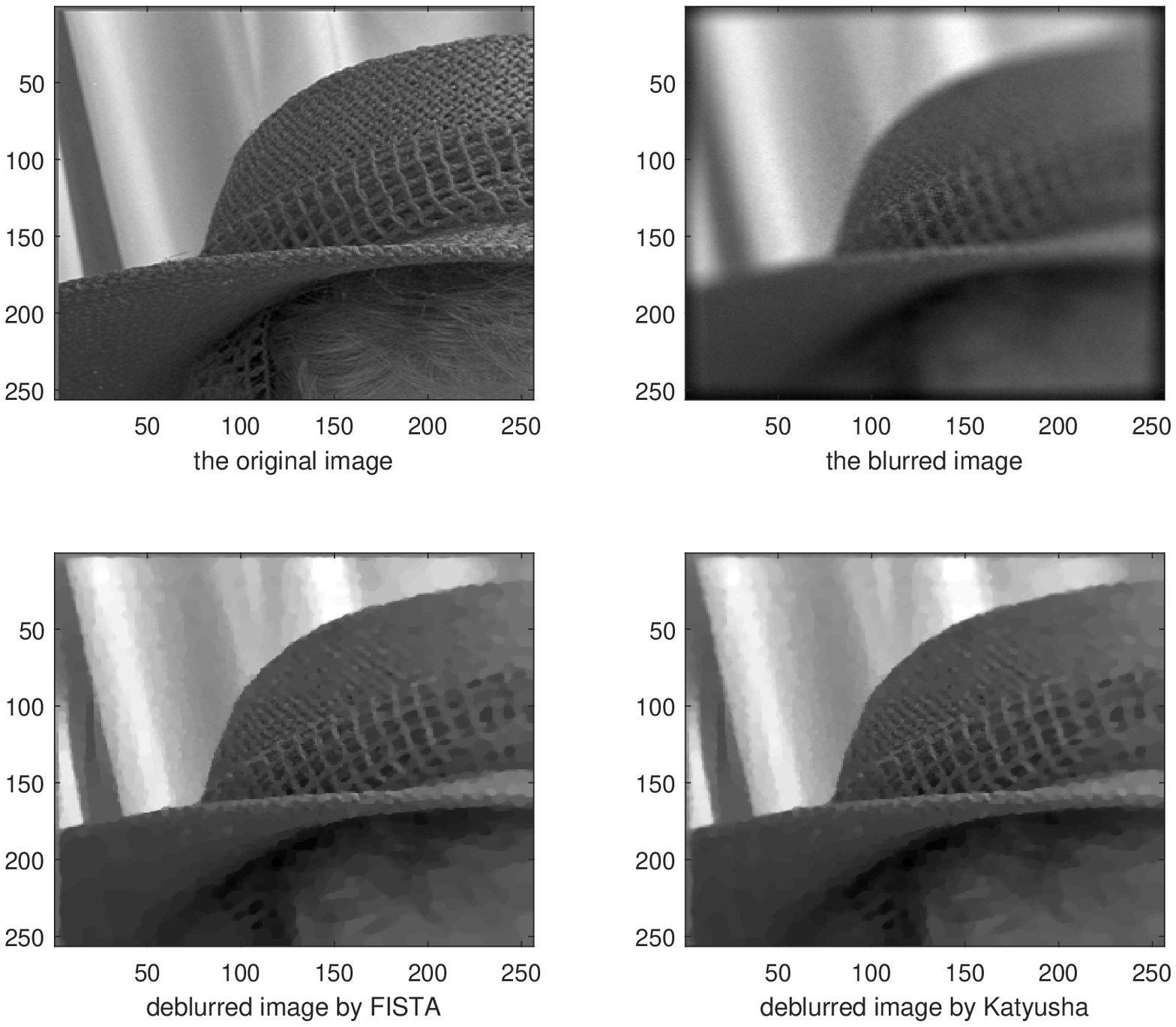}
	\caption{Up-left: the orignal image; up-right: the blurred image which is also corrupted with Gaussian noise; Down-left: deblurred image by FISTA; Down-right: deblurred image by Katyusha algorithm.}
	\label{fig:1_ima}
\end{figure}

We start by a simple space-varying deblurring \cite{whyte2012non} example where a part (sized 256 by 256) of the \say{Kodim04} image from \textit{Kodak Lossless True Color Image Suite} \cite{kodim} is blurred with a space-varying blur kernel which imposes less blurring at the center but increasingly severe blurring towards the edge. For the shape of the blur kernel, we choose the out-of-focus kernel provided in \cite{almeida2013deconvolving}. We also add a small amount of noise to the blurred image.

We test the effectiveness of several algorithms by solving the same TV-regularized least-squares problem, to get an estimation of the ground truth image. The algorithms we test in the experiments include the accelerated full gradient method FISTA \cite{beck2009fast}, proximal SGD \cite{rosasco2014convergence}, the proximal SVRG \cite{xiao2014proximal} and its accelerated variant, Katyusha algorithm \cite{allen2017katyusha} which has achieved optimal convergence rate in theory for (\ref{obj}). Perhaps surprisingly, on this experiment we report a negative result in Fig.\ref{fig:1} for all these randomized algorithms. The most efficient solver in this task is the full gradient method FISTA in terms of wall clock time and number of epochs (datapasses). The state-of-the-art stochastic gradient methods with Nesterov's acceleration even cannot beat FISTA in terms of epoch counts. For all the randomized algorithms we choose a minibatch size which is 10 percent of the total data size. For stochastic gradient methods, a smaller minibatch size in this case did not provide better performance in datapasses and significantly slowed down running time due to the multiple calls on the proximal operator.

\section{Notations and definitions}\label{notation}

We now make clear some notations which will occur frequently throughout this paper. We denote an image $X \in \mb{R}^{d_1 \times d_2}$ in its vectorized (raster) form $x \in \mb{R}^d$ where $d = d_1 \times d_2$, throughout this paper. Denote $X$'s columns as $x_1, x_2, ..., x_{d_2} \in \mb{R}^{d_1}$, and $X = [x_1, x_2, ..., x_{d_2}]$, then $x = [x_1; x_2; ...; x_{d_2}]$. Without specification, the scalar $n$ denotes the number of measurements, while $d$ denotes the dimension of measurements, and $m$ denotes the size of the minibatches, while $K$ is the number of minibatches. For a positive integer $q$, the notation $[q]$ represents the collection of all positive integers up to $q$ : $[1,...,q]$. When we write $m= \frac{n}{K}$, we implicitly assume that $n \mod K = 0$ -- this is just for simplification of presentation, without the loss of generality.

For a given vector $v$ and a scalar $p \geq 1$, we write its $\ell_p$ norm as $\|v\|_p$. We write the $j$-th row of $A$ as $a_j$, and $A = [a_1; a_2; ...; a_n]$. We denote the transpose of $A$ as $A^T$. We describe $\Bar{S} = [S_1, S_2, ..., S_K]$ as the partition of indices for a subsampling scheme, where $S_1 \cup S_2 \cup ... \cup S_K = [n]$ and $S_i \cap S_j = \emptyset, \forall i \neq j \in [n]$. Meanwhile,  we use superscript indexing $S^1, S^2,... , S^K$ to denote the corresponding row subsampling operators supported on the index set $S_1, S_2, ..., S_K$. For a given forward operator $A \in \mb{R}^{n \times d}$, we denote its spectral norm as $\|A\|$, and its Frobenius norm as $\|A\|_F$. We denote the $k$-th large eigenvalue of a symmetric matrix $H \in \mb{R}^{d \times d}$ as $\sigma(H, k)$. For $k > d$, we denote $\sigma(H, k) = 0$. We denote the $\ell_{1\rightarrow 2}$ inducing norm of $A$ as:
\begin{equation}
    \|A^T\|_{1 \rightarrow 2}^2 := \max_{i \in [n]}\|a_i\|_2^2.
\end{equation}
For an Euclidean vector space $\mc{X}$, we denote $\mc{F}_L^{p, q}(\mc{X})$ for the class of convex functions which are $p$-times differentiable while the $q$-th derivatives of them are $L$-Lipschitz continuous on $\mc{X}$. Without specification, we set $\mc{X} = \mb{R}^d$ throughout this paper. For a given minibatch index partition $[S_1, S_2, ... ,S_K]$ the minibatches and the gradients are defined as the following:
\begin{equation}
  f_{S_k} (x) = \frac{K}{n}\sum_{i \in S_k} f_i(x),\ \ \ \triangledown f_{S_k} (x) := \frac{K}{n}\sum_{i \in S_k} \triangledown f_i(x),\ \  k \in [K].
\end{equation}
The smoothness conditions of the full batch $f(x)$ and minibatches $f_{S_k} (x)$ are formally described as the following:
\begin{definition}
(Smoothness of the Full-Batch and the Mini-Batches.) $f(\cdot)$ is $L_f$-smooth and each $f_{S_k}(\cdot)$ is $L_{b}$-smooth, that is:
\begin{equation}
    f(x) - f(y) -\triangledown f(y)^T (x - y) \leq \frac{L_f}{2} \|x - y\|_2^2,\ \ \ \forall x, y \in \X,
\end{equation}
and
\begin{equation}\label{L_b_def}
    f_{S_k}(x) - f_{S_k}(y) -\triangledown f_{S_k}(y)^T (x - y) \leq \frac{L_b}{2} \|x - y\|_2^2,\forall x, y \in \X.
\end{equation}
\end{definition}
It is well known that, (\ref{L_b_def}) implies:
\begin{equation}
    \|\triangledown f_{S_k}(x) - \triangledown f_{S_k}(y) \|_2 \leq L_b \|x - y\|_2, \ \ \ \forall x, y \in \X,
\end{equation}
and,
\begin{equation}\label{L_b_def_2}
    \|\triangledown f_{S_k}(x) - \triangledown f_{S_k}(y) \|_2^2 \leq 2L_b [f_{S_k}(x) - f_{S_k}(y) - \langle \triangledown f_{S_k}(y), x - y \rangle]\ \ \ \forall x, y \in \X.
\end{equation}
We refer to \cite{bubeck2015convex} and \cite[Theorem 2.1.7]{nesterov2013introductory} for details.  In this paper we mainly consider two types of minibatch schemes, the partition minibatch sampling and random with-replacement sampling:
\begin{definition}[Partition minibatch sampling]\label{def_par_s}
 A subsampling matrix $S \in \mathbb{R}^{m \times n}$ is a partition minibatch sampling matrix if we pick it uniformly at random from a set of random subsampling matrix $[S^1; S^2; ...; S^K]$ where $S_1 \cup S_2 \cup ... \cup S_K = [n]$ and $S_i \cap S_j = \emptyset, \forall i \neq j \in [n]$.
\end{definition}
\begin{definition}[Random with-replacement sampling]\label{def_rand_s}
 A subsampling matrix $S \in \mathbb{R}^{m \times n}$ is an uniform random sampling matrix if we pick it uniformly at random from all possible $m$ row subset of $I_{n\times n}$.
\end{definition}

\section{Limitations of stochastic optimization}

The previous deblurring example appears to be contrary to the popular belief among the stochastic optimization community and the experience of machine learning practitioners, that stochastic gradient methods are much faster in terms of iteration complexity than deterministic gradient methods in solving large scale problems. To be specific -- to achieve an objective gap suboptimality of $F(x) - F(x^\star) \leq \varepsilon$, optimal stochastic gradient methods needs only $\Theta\left(n + \sqrt{n{L}/{\varepsilon}}\right)$ evaluations of $\triangledown f_i$ where $L$ denotes the gradient Lipschitz constant of $f_i$, see e.g. \cite{lan2015optimal,lin2015universal,allen2016katyusha}, while $\Theta\left(n\sqrt{{L}/{\varepsilon}}\right)$ are needed for optimal full gradient methods \cite{nesterov2007gradient}. Where is the loophole?

It is often easily ignored that the complexity results above are derived under different smoothness assumptions. For the convergence bound for full gradient, the full smooth part of the cost function $f(.)$ is assumed to be $L$-smooth, while for the case of stochastic gradient, every individual function $f_i(.)$ is assumed to be $L$-smooth. Now we can clearly see the subtlety: to compare these complexity results and make meaningful conclusions, one has to assume that these two Lipschitz constants are roughly the same. While this can be true, and is true for many problems, there are exceptions -- image deblurring is one of them.

For the case where the minibatch size is $1$, we can denote the smoothness constants of $f(\cdot)$ and $f_i(\cdot)$ as $L_f$ and $L_b$ respectively, we illustrate here some extreme examples for the two smoothness constants to demonstrate this possible dramatic difference:

Let $f(x) = \frac{1}{2n}\|Ax - b\|_2^2 = \frac{1}{n} \sum_{i = 1}^n \frac{1}{2} (a_i^T x - b_i)^2 := \frac{1}{n} \sum_{i = 1}^n f_i(x)$.

(1) If $a_1 = a_2 = a_3 ... , = a_{n-1} = a_n$, then $L_f = L_b$. 

(2) If $A = I$, then $L_f = \frac{1}{n} L_b$.

Now we turn to our analysis. Given a minibatch partition $[S_1, S_2, ... ,S_K]$ such that $S_1 \cup S_2 \cup...\cup S_K = [n]$ and:
\begin{equation}
  f_{S_k} (x) = \frac{K}{n}\sum_{i \in S_k} f_i(x),\ \ \ \triangledown f_{S_k} (x) := \frac{K}{n}\sum_{i \in S_k} \triangledown f_i(x), 
\end{equation}
while $k \in [K]$. In order to identify the potential of a certain optimization problem to be more efficiently solved using stochastic gradient methods, we start by deriving a motivating theorem comparing the convergence of the optimal full gradient methods as well as the optimal stochastic gradient methods.

\subsection{A motivational analysis}

 We consider comparing two classes of algorithm: the optimal deterministic gradient methods which meet the deterministic gradient-complexity lower bound \cite[Theorem 2.16]{nesterov2013introductory} presented in Theorem \ref{lower_bound_full_gradient} and the optimal stochastic gradient methods which are able to match the stochastic gradient-complexity lower bound \cite[Theorem 7]{woodworth2016tight} presented in Theorem \ref{lower_bound_SGD}. The FISTA algorithm and the Katyusha algorithm are typical instances from these two classes of algorithms. 
\begin{definition}\label{class_of_optimal_full}
(The class of optimal deterministic gradient algorithms.) A deterministic gradient method $\A_{\mr{full}}$ is called optimal if for any $s \geq 1$,  the update of $s$-th iteration $x^s_{\A_{\mr{full}}}$ satisfies:
\begin{equation}
    F(x^s_{\A_\mr{full}}) - F^\star \leq \frac{C_1 L_f \|x^0 - x^\star\|_2^2}{s^2},
\end{equation}
for some positive constant $C_1$.
\end{definition}
It is known that the FISTA algorithm satisfies this definition with $C_1 = 4$ \cite{beck2009fast}. We also define the class for optimal stochastic gradient methods:
\begin{definition}\label{class_of_stochastic}
(The class of optimal stochastic gradient algorithms.) A stochastic gradient method $\A_{\mr{stoc}}$ is called optimal if for any $s \geq 1$ and $K \geq 1$, after a number of $s\cdot K$ stochastic gradient evaluations, the output of the algorithm $x^s_{\A_{\mr{stoc}}}$ satisfies:
\begin{equation}
    \E F(x^s_{\A_{\mr{stoc}}}) - F^\star \leq \frac{C_2 [F(x^0) - F^\star]}{s^2} + \frac{C_3 L_b \|x^0 - x^\star\|_2^2}{Ks^2},
\end{equation}
for some positive constants $C_2$ and $C_3$.
\end{definition}
Note that the accelerated stochastic variance-reduced gradient methods such as Katyusha \cite{allen2016katyusha}, MiG\cite{zhou2018simple} and Point-SAGA \cite{defazio2016simple} satisfy this definition with different constants of $C_2$ and $C_3$.

Now we are ready to present the motivational theorem, which follows from simply combining the existing convergence results of the lower bounds for the stochastic and deterministic first-order optimization \cite{nesterov2013introductory,woodworth2016tight}.

\begin{theorem}\label{thm_6.3.1}
Let $g(.) = 0$, $m = K$. Denote an optimal deterministic algorithm ${\A_{\mr{full}}}$ which satisfies Def. \ref{class_of_optimal_full}, and an optimal stochastic gradient algorithm ${\A_{\mr{stoc}}}$ which satisifes Def. \ref{class_of_stochastic}. For a sufficiently large dimension $d$ and $\X = \left\{ x \in \mathbb{R}^d : \|x\|_2^2 \leq 1 \right\}$, there exists a set of convex and smooth functions $f_i \in \mathcal{F}_{L_b}^{1, 1}(\X)$, while $\frac{1}{K}\sum_{i=1}^K f_i = f \in \mathcal{F}_{L_f}^{1, 1}(\X)$, such that:
\begin{equation}
   \frac{\E f(x^s_{\A_{\mr{stoc}}}) - f^\star}{f(x^s_{\A_\mr{full}}) - f^\star} \geq   \frac{c_0 L_b}{KL_f} 
\end{equation}
for some positive constant $c_0$ which does not depend on $L_b$, $L_f$ and $K$. 
\end{theorem}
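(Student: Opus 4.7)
The plan is to prove the theorem by combining two ingredients already present in the excerpt: the deterministic upper bound built into Definition \ref{class_of_optimal_full}, and the stochastic first-order lower bound of Woodworth and Srebro (their Theorem 7, as cited in Theorem \ref{lower_bound_SGD}). The desired inequality is nothing more than the ratio of an upper bound on $f(x^s_{\A_{\mr{full}}}) - f^\star$ and a lower bound on $\E f(x^s_{\A_{\mr{stoc}}}) - f^\star$, evaluated on a common worst-case instance.

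For the denominator, since $\X$ is the unit ball we have $\|x^0 - x^\star\|_2 \leq 1$ for any initialization, so Definition \ref{class_of_optimal_full} applied to any problem with full-gradient Lipschitz constant $L_f$ and any optimal deterministic method $\A_{\mr{full}}$ gives
$$f(x^s_{\A_{\mr{full}}}) - f^\star \leq \frac{C_1 L_f}{s^2}.$$
Because this is a worst-case upper bound, it applies in particular to the hard instance we will use for $\A_{\mr{stoc}}$ below.

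For the numerator, I would invoke the construction behind the Woodworth--Srebro lower bound. For sufficiently large $d$ they build a finite sum $\frac{1}{K}\sum_{i=1}^K \tilde{f}_i$ of convex quadratics of Nemirovski-chain type, randomized across the $K$ components, on which any randomized first-order method querying at most $sK$ component gradients must incur expected suboptimality of order $L/(Ks^2)$, where $L$ is the per-component smoothness constant. The task is to calibrate this construction so that each $f_i$ is exactly $L_b$-smooth while the aggregate $f = \frac{1}{K}\sum_i f_i$ is exactly $L_f$-smooth for any prescribed pair $L_f \leq L_b$; this is done by tracking the Hessians of the chain quadratics and, if needed, adding a shared rescaling or a mild common quadratic so that the per-component and full-batch Lipschitz constants can be fixed independently without disturbing the lower-bound argument. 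Granting this step, there exist $\{f_i\} \subset \mc{F}_{L_b}^{1,1}(\X)$ with $f \in \mc{F}_{L_f}^{1,1}(\X)$ and a universal constant $c>0$ such that every $\A_{\mr{stoc}}$ satisfying Definition \ref{class_of_stochastic} obeys
$$\E f(x^s_{\A_{\mr{stoc}}}) - f^\star \geq \frac{c L_b}{Ks^2}.$$

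Dividing the two inequalities on this common instance yields
$$\frac{\E f(x^s_{\A_{\mr{stoc}}}) - f^\star}{f(x^s_{\A_{\mr{full}}}) - f^\star} \geq \frac{c L_b / (Ks^2)}{C_1 L_f / s^2} = \frac{c_0 L_b}{K L_f}$$
with $c_0 := c/C_1$, which depends only on the universal constants in the upper and lower bounds and not on $L_b$, $L_f$, or $K$, as required. The main obstacle is the bookkeeping needed to decouple $L_b$ and $L_f$ in the Woodworth--Srebro hard instance, since their statement focuses on a single smoothness parameter; once one verifies (by direct Hessian calculation on the chain construction) that the two constants can be controlled independently within the convex class $\mc{F}^{1,1}$, the rest of the proof is a one-line combination of the two cited bounds.
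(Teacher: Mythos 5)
Your proposal is correct and follows essentially the same route as the paper's proof in Appendix~\ref{Ad2}: both take the worst-case upper bound from Definition~\ref{class_of_optimal_full} for the denominator and the Woodworth--Srebro lower bound (Theorem~\ref{lower_bound_SGD}) for the numerator on a common hard instance, then divide; your explicit remark that the hard instance must be calibrated so that $L_b$ and $L_f$ can be prescribed independently is a point the paper's proof passes over silently, so flagging it is a genuine (if minor) improvement. The only slip is that two points of the unit ball can be distance $2$ apart, so one should use $\|x^0-x^\star\|_2^2\leq 4$ rather than $\leq 1$; this affects only the value of $c_0$ (the paper takes $c_0 = C_{\mathrm{stoc}}^2/(4C_1)$).
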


We provide the proof in Appendix \ref{Ad2}. From this theorem we can see that with the same epoch count, the ratio of the objective-gap sub-optimality achieved by $\A_{\mr{full}}$ and $\A_{\mr{stoc}}$ can be lower bounded by $\Omega(\frac{L_b}{KL_f})$ in the worst case. In other words, there exists a smooth finite-sum objective function, such that no optimal stochastic gradient method can achieve an acceleration on objective-gap more that $c_0 \cdot \frac{L_b}{KL_f}$ over any optimal deterministic gradient algorithm on minimizing this objective. Although the constant seems pessimistic, it is within our expectation since the lower bound on the convergence speed of stochastic gradient algorithms are derived on the worst possible function which satisfies the smoothness assumption. Motivated by the theory, we now further investigate and propose to evaluate the potential of stochastic acceleration simply by the ratio $\frac{KL_f}{L_b}$ which dominates our lower bound in Theorem \ref{thm_6.3.1}.

\subsection{An in-depth analysis of minibatch SGD for linear inverse problems}

In the previous subsection, we have provided a preliminary motivational analysis, which demonstrates that the speedup of stochastic gradient methods (with data-partition minibatches) over their deterministic counterparts in terms of objective gap convergence are at the worst case controlled by the ratio of Lipschitz constants of the stochastic gradient and full gradient, for the case of unregularized smooth optimization. Such analysis, although motivational, is restrictive in some aspects: in imaging inverse problems we usually consider non-smooth regularization, and we are more concerned with the convergence rates of optimization algorithms regarding estimation error. In this subsection, for the case where the linear measurements are noiseless (i.e. $\|w\|_2 = 0$), we provide a novel convergence rate analysis of minibatch SGD on solving constrained least-squares, which is a subclass of the regularized least-squares (\ref{reg_ls}). By comparing our rate of minibatch SGD with the best known result on deterministic proximal (projected) gradient descent (PGD) in the same setting, we confirm that the ratio of the Lipschitz constants of stochastic gradient and full gradient is indeed the key to characterize the practicality of stochastic optimization for a given inverse problem.

The constrained least-squares objective is written as the following:
\begin{equation}\label{eq:2}
    x^\star \in \arg \min_{x \in \mb{R}^d} \left\{ f(x) + \hat{g}(x)\right\},\ f(x) := \frac{1}{2n}\|Ax - b\|_2^2, \ \hat{g}(x) = \iota_\mathcal{K}(x),
\end{equation}
where the constraint set $\mc{K}$ is enforced as regularization, and the indicator function is used as the regularization to utilize the prior knowledge for better estimation:
\begin{equation}
 \iota_{\mathcal{K}}(x) =  \left\{
  \begin{tabular}{cc}
  $0$ &   \ if \ $x \in \mc{K}$. \\
  $+\infty$ & \ if \ $x \notin \mc{K}$.
  \end{tabular}
  \right.
\end{equation}
One typical example would be the total-variation (TV) semi-norm constraint in imaging applications such as inpainting and deblurring \cite{combettes2004image}, using an efficient TV-projection operator such as the one developed by \cite{fadili2010total}.

We restrict ourselves to make the convergence rate comparison of minibatch SGD and deterministic PGD on constrained least-squares mainly due to the fact that the restricted strong-convexity \cite{agarwal2010fast, agarwal2012fast, negahban2012unified, 2015_Oymak_Sharp}, which is essential for showing estimation-error convergence of the iterates, when applicable, is valid globally in this case since all descent directions are restricted within a tangent cone of the constraint set, as we will see. While for generic regularizers, such necessary restricted strong-convexity condition can only be valid locally \cite{agarwal2012fast}. Such an issue will make the desired accurate convergence rate comparison on the estimation error hopeless under the currently known framework for analyzing first-order methods, unless strong extra assumptions are made.

In this subsection, we also study the stochastic acceleration in the case where random with-replacement sampling is used instead of partitioning. Both random with-replacement minibatch scheme and the data-partitioning minibatch scheme are standard choices for stochastic gradient methods. Our analysis for minibatch SGD cover both data-partition minibatch schemes (Def. \ref{def_par_s}) and random with-replacement minibatch schemes (Def. \ref{def_rand_s}).

Unlike the analysis of data-partitioning sampling, a major difficulty for the analysis of random with-replacement sampling is that, the step-size choices suggested by the existing convergence results of minibatch proximal stochastic gradient methods can be highly suboptimal, which lead to conservative convergence rate guarantees. Fortunately, there has been recent progress \cite{pmlr-v97-qian19b,gazagnadou2019optimal} identifying near optimal step-size choices for minibatch stochastic gradient descent and SAGA algorithms for minimizing strongly convex and smooth objective functions. However, these existing results cannot be directly applied in inverse problems, mainly due to the following reasons: 

\begin{itemize}
    \item Firstly, these results are only for smooth optimization, while we often use non-smooth regularization in inverse problems, such as sparsity-inducing norms. It is unclear whether such large step-size choices are still allowed in the proximal setting.
    \item Secondly, these results require the objective function to be strongly-convex, which is not satisfied in general for inverse problems.
\end{itemize}

Due to these obstacles, the first step we should take is to extend the analysis of \cite{pmlr-v97-qian19b} to linear inverse problems with non-smooth regularization.

\subsubsection{Minibatch SGD for linear inverse problems with constraints}
\label{sec:main}

We denote $\mathcal{P}_\mathcal{K}$ as the orthogonal projection on to the set $\mathcal{K}$ and $\eta$ denotes the step size. We write down the minibatch SGD algorithm using with-replacement random sampling as the solver for (\ref{eq:2}):
 \begin{eqnarray*}
 && \mathrm{\textbf{Minibatch SGD}} - \mathrm{Initialize}\ x^0 \in \mathbb{R}^d\\
 &&\mathrm{For} \ \ \ i = 0, 1, 2,...,  K\\
&&\left\lfloor
\begin{array}{l}
x^{i+1}=\mathcal{P}_\mathcal{K}[x^{i}-\eta \triangledown f_{S_i}(x^i)]
\end{array}
\right.
 \end{eqnarray*}
 where $\triangledown f_{S_i}(x^i) = \frac{1}{m} (S^iA)^T(S^iAx^{i}-S^ib)$, and $S^i \in \mathbb{R}^{m \times n}$ are random subsampling matrices. It is well-known that, in order to ensure convergence, the random matrices $S^i$ need to satisfy:
\begin{equation}\label{ES}
    \E ({S^i}^T S^i) = \frac{m}{n} I, \forall i \in [K].
\end{equation}
In contrast to (\ref{L_b_def_2}), we introduce the notion of expected smoothness proposed by \cite{pmlr-v97-qian19b,gower2018stochastic}, which will be invloved in our analysis
\begin{definition}[Expected Smoothness]
 Let $\mc{D}$ be the distribution where the random subsampling operator $S$ is drawn from, and we denote $S_o$ as the corresponding index set subselected by $S$, the expected smoothness of the minibatches is defined as:
 \begin{equation}\label{es_def}
     \mb{E}_\mc{D} \|\triangledown f_{S_o}(x) - \triangledown f_{S_o}(y) \|_2^2 \leq 2 L_e [f(x) - f(y) - \langle \triangledown f(y), x - y \rangle],\ \  \forall x, y \in \mc{K}.
 \end{equation}
\end{definition}
If we use a data-partition minibatch, we have $L_e \leq L_b$ in (\ref{L_b_def_2}), as shown in \cite{pmlr-v97-qian19b}.

\subsubsection{Preliminaries for the analysis of minibatch SGD} We next provide a general theoretical framework for the analysis of minibatch SGD with the restricted strong convexity \cite{2015_Oymak_Sharp}.

\begin{definition} \label{D1}
 Cone $\mathcal{C}$ is the smallest cone at point $x^\dagger$ which contains the shifted set $\mathcal{K}-x^\dagger$:
 \begin{equation}
    \mathcal{C} := \left\{v \in \mathbb{R}^d |\  v = c(x - x^\dagger) , \forall c \geq 0, x \in \mathcal{K} \right\}.
\end{equation}
\end{definition}

\begin{definition}[Restricted Strong-Convexity]\label{D5}
 The restricted strong-convexity constant $\mathcal{\mu}_c$ is the largest non-negative value which satisfies: 
 \begin{equation}
    \frac{1}{n} \|Az\|_2^2 \geq \mu_c \|z\|_2^2 ,\ \ \forall z \in \mathcal{C}
 \end{equation}
\end{definition}
If the measurement system is noiseless, i.e. $\|w\|_2 = 0$, we expect the estimator (\ref{eq:2}) to be exact: $x^\star=x^\dagger$, if not, we expect the estimator to be robust to noise: $\|x^\star-x^\dagger\|_2\leq\frac{2\|w\|_2}{\epsilon}$. The success of exact/robust estimation is completely dependent on the null-space property of $A$ and the tangent cone $\mathcal{C} \in \mathcal{K}-x^\dagger$ on $x^\dagger$. In short, for the first scenario the necessary condition for exact recovery is $\|Az'\|_2 > 0$ for any normalized vector $z' \in \mathcal{C}$ \cite[Proposition 2.1]{2012_Chandrasekaran_Convex}; for the second scenario the necessary condition for robust recovery is $\|Az\|_2 \geq \epsilon \|z\|_2$  for any $z \in \mathcal{C}$ \cite[Proposition 2.2]{2012_Chandrasekaran_Convex}. This relationship between the null space property of $A$ and the constraint on $x^\dagger$ is fully captured by the restricted strong convexity property. If the restricted strong convexity condition is valid for (\ref{eq:2}), we know that $x^\star$ provides reliable and robust estimation for $x^\dagger$.

\subsubsection{Convergence result of minibatch SGD}
Using the expected smoothness result and restricted strong-convexity condition, we are able to derive the following convergence rate for minibatch proximal SGD under uniform random with-replacement minibatch scheme. If we set the step size of the minibatch SGD to be $\frac{1}{L_e}$, we can have the following linear convergence result up to a statistical accuracy:
\begin{theorem}[Convergence result for minibatch SGD]\label{T1}
 Suppose that $\|w\|_2 = 0$, let the step size of the minibatch SGD algorithm $\eta = \frac{1}{L_e}$. The expected error of the update by the $i$-th iteration obeys:
 \begin{equation}
     \E(\|x^i-x^\dagger\|_2) \leq \left(1 - \frac{\mu_c}{ L_e}\right)^{\frac{i}{2}}\|x^0-x^\dagger\|_2.
 \end{equation}
\end{theorem}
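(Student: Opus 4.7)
The plan is to establish a one-step contraction for the squared distance $\E\|x^{i+1} - x^\dagger\|_2^2$, then iterate and apply Jensen's inequality to recover the bound on $\E\|x^i - x^\dagger\|_2$. First I would use non-expansiveness of the Euclidean projection together with feasibility $x^\dagger \in \mathcal{K}$ to write $\|x^{i+1} - x^\dagger\|_2^2 \le \|x^i - \eta\triangledown f_{S_i}(x^i) - x^\dagger\|_2^2$, expand, and take conditional expectation over the random minibatch index. Using the unbiasedness condition $\E({S^i}^T S^i) = (m/n)I$, the cross term collapses into the deterministic inner product and one obtains
$$
\E_i\|x^{i+1} - x^\dagger\|_2^2 \le \|x^i - x^\dagger\|_2^2 - 2\eta\langle \triangledown f(x^i), x^i - x^\dagger\rangle + \eta^2\E_i\|\triangledown f_{S_i}(x^i)\|_2^2.
$$

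Next I would exploit the noiseless assumption $b = Ax^\dagger$ twice. Because $\triangledown f(x^\dagger) = 0$ and $f(x^\dagger) = 0$, the inner product simplifies to $\langle \triangledown f(x^i), x^i - x^\dagger\rangle = \frac{1}{n}\|A(x^i - x^\dagger)\|_2^2 = 2f(x^i)$. Moreover, every realization of the minibatch stochastic gradient vanishes at $x^\dagger$, i.e.\ $\triangledown f_{S_i}(x^\dagger) = 0$, so applying expected smoothness (\ref{es_def}) with $y = x^\dagger$ yields $\E_i\|\triangledown f_{S_i}(x^i)\|_2^2 \le 2L_e f(x^i)$. Substituting these two identities and plugging in $\eta = 1/L_e$, the $f(x^i)$ contributions combine into
$$
\E_i\|x^{i+1} - x^\dagger\|_2^2 \le \|x^i - x^\dagger\|_2^2 - \frac{2}{L_e}f(x^i).
$$

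Then I would invoke restricted strong convexity. Since $x^i, x^\dagger \in \mathcal{K}$, the error vector $x^i - x^\dagger$ lies in $\mathcal{K} - x^\dagger$ and hence in the tangent cone $\mathcal{C}$ of Definition \ref{D1}, so Definition \ref{D5} gives $2f(x^i) = \frac{1}{n}\|A(x^i - x^\dagger)\|_2^2 \ge \mu_c\|x^i - x^\dagger\|_2^2$. Inserting this produces the one-step contraction $\E_i\|x^{i+1} - x^\dagger\|_2^2 \le (1 - \mu_c/L_e)\|x^i - x^\dagger\|_2^2$. Iterating via the tower property and applying Jensen's inequality $\E\|x^i - x^\dagger\|_2 \le \sqrt{\E\|x^i - x^\dagger\|_2^2}$ delivers the stated geometric rate.

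The step that needs the most care is the control of $\E_i\|\triangledown f_{S_i}(x^i)\|_2^2$: expected smoothness as stated in (\ref{es_def}) only bounds a \emph{difference} of stochastic gradients at two points, and it is exactly the noiseless hypothesis $\|w\|_2 = 0$ that makes $\triangledown f_{S_i}(x^\dagger) = 0$ and therefore reduces this difference to the quantity we need. In a noisy regime there would be a residual variance term $\E_i\|\triangledown f_{S_i}(x^\dagger)\|_2^2 \neq 0$ that obstructs the clean geometric contraction and forces a convergence result only to a noise-level neighborhood; the rest of the argument is otherwise the standard projected-SGD algebra adapted to the restricted strong convexity setting.
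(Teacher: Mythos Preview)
Your argument is correct and essentially parallels the paper's, with two small structural differences worth noting. First, to pass through the projection step the paper invokes the cone-projection machinery of \cite{2015_Oymak_Sharp} (translate, enlarge $\mathcal{K}-x^\dagger$ to the tangent cone $\mathcal{C}$, use the variational form of the cone projection) before reaching the bound $\|x^{i+1}-x^\dagger\|_2\le \|(I-\tfrac{\eta}{m}A^T{S^i}^TS^iA)(x^i-x^\dagger)\|_2$; you instead use the one-line non-expansiveness $\|\mathcal{P}_{\mathcal{K}}(u)-x^\dagger\|_2\le\|u-x^\dagger\|_2$ for $x^\dagger\in\mathcal{K}$, which is cleaner here since $\mathcal{K}$ is assumed convex, though the paper's cone route would survive some non-convex constraint sets. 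Second, the paper applies Jensen \emph{per step} and then iterates in the non-squared norm, whereas you iterate the squared-norm contraction and apply Jensen once at the end; both orderings yield the same rate. The remaining ingredients---unbiasedness for the cross term, expected smoothness at $y=x^\dagger$ (exactly the paper's Lemma \ref{expect_sm}), and restricted strong convexity to close the loop---are used identically. Your remark that the noiseless hypothesis is precisely what kills the residual variance $\E_i\|\triangledown f_{S_i}(x^\dagger)\|_2^2$ is the right reading of why the clean geometric rate holds.
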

We include the proof of this convergence theorem in Appendix \ref{Ad3}. The convergence result of the deterministic projected gradient descent under restricted strong-convexity is well-studied in the literature, and we present it here for comparison, while the proof is simple, following a similar procedure to that in, e.g. \cite{2015_Oymak_Sharp, tang2017exploiting}:
\begin{theorem}[Convergence result for deterministic projected gradient descent]\label{T2}
 Suppose that $\|w\|_2 = 0$, let the step-size of projected gradient descent algorithm be $\eta = \frac{1}{L_f}$, the estimation error of the update by the $i$-th iteration obeys:
\begin{equation}
    \|x^i - x^\dagger\|_2 \leq \left( 1 - \frac{\mu_c}{L_f} \right)^i \|x^0 - x^\dagger\|_2.
\end{equation}
\end{theorem}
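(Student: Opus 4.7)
The plan is to track the estimation error through the projected gradient iteration by exploiting three structural facts: the gradient $\nabla f(x) = \frac{1}{n}A^T(Ax - b) = H(x - x^\dagger)$ is linear in the residual under $b = Ax^\dagger$ (where $H := \frac{1}{n}A^T A$ is PSD with $\|H\| = L_f$); the residuals $x^{i} - x^\dagger$ lie in the descent cone $\mathcal{C}$; and the projection $\mathcal{P}_\mathcal{K}$ admits a first-order characterization that is tighter than mere nonexpansiveness.

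First I would use the obtuse-angle property of the projection. Writing $w^i := x^i - \eta \nabla f(x^i)$ and invoking $\langle w^i - x^{i+1},\, x^\dagger - x^{i+1}\rangle \leq 0$ (valid since $x^\dagger \in \mathcal{K}$), a short expansion of $\|x^{i+1} - x^\dagger\|_2^2$ yields
\begin{equation*}
\|x^{i+1} - x^\dagger\|_2^2 \leq \langle x^{i+1} - x^\dagger,\; (I - \eta H)(x^i - x^\dagger)\rangle.
\end{equation*}
Both factors of the inner product on the right-hand side sit in $\mathcal{C}$ (assuming $x^0 \in \mathcal{K}$; otherwise an initial projection step resolves this).

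Second, with $\eta = 1/L_f$ the operator $M := I - H/L_f$ satisfies $M \succeq 0$ because $H \preceq L_f I$. That PSD-ness lets me apply Cauchy--Schwarz in the semi-inner product induced by $M$:
\begin{equation*}
\langle u,\, M v\rangle = \langle M^{1/2} u,\, M^{1/2} v\rangle \leq \sqrt{u^T M u}\,\sqrt{v^T M v}.
\end{equation*}
Applied to $u = x^{i+1} - x^\dagger$ and $v = x^i - x^\dagger$, both in $\mathcal{C}$, and combined with the previous step, this upper-bounds $\|x^{i+1} - x^\dagger\|_2^2$ by the geometric mean of $(x^{i+1}-x^\dagger)^T M (x^{i+1}-x^\dagger)$ and $(x^i-x^\dagger)^T M (x^i-x^\dagger)$.

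Third, I would invoke Definition \ref{D5}: for any unit $z \in \mathcal{C}$, $z^T H z \geq \mu_c$, hence $z^T M z \leq 1 - \mu_c/L_f$. Inserting this for both $z \propto x^{i+1}-x^\dagger$ and $z \propto x^i - x^\dagger$, then dividing through by $\|x^{i+1} - x^\dagger\|_2$, gives the one-step contraction $\|x^{i+1} - x^\dagger\|_2 \leq (1 - \mu_c/L_f)\|x^i - x^\dagger\|_2$; iterating delivers the stated bound. The main subtlety, rather than a serious obstacle, is ensuring that \emph{both} arguments of the $M$-inner product lie in $\mathcal{C}$ so that RSC applies to each factor separately -- this is precisely what upgrades the naive $\sqrt{1 - \mu_c/L_f}$ rate (obtained by directly expanding $\|(I-\eta H)(x^i - x^\dagger)\|_2^2$ and using $\|Hz\|_2^2 \leq L_f\, z^T H z$) to the sharper $(1 - \mu_c/L_f)$ contraction per iteration.
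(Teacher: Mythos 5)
Your proof is correct. The paper itself does not write out a proof of Theorem~\ref{T2} (it defers to \cite{2015_Oymak_Sharp,tang2017exploiting}), so the natural comparison is with that standard argument and with the paper's own proof of the stochastic analogue, Theorem~\ref{T1}, in Appendix~\ref{Ad3}. Your argument is essentially the sharp one from \cite{2015_Oymak_Sharp}: reduce the per-iteration contraction to the cone-restricted bilinear form $u^T(I-\eta H)v$ with \emph{both} arguments in $\mathcal{C}$, and bound it by $1-\mu_c/L_f$ using positive semidefiniteness of $M = I - H/L_f$, Cauchy--Schwarz in the $M$-semi-inner product, and Definition~\ref{D5} applied to each factor. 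You arrive at that bilinear form via the obtuse-angle (variational) characterization of $\mathcal{P}_{\mathcal{K}}$ rather than via the translation/cone-projection/supremum chain the paper uses for Theorem~\ref{T1}; both are valid, but your closing remark pinpoints the genuine difference: the Theorem~\ref{T1} chain discards the cone membership of the test vector at its step (d) (non-expansiveness), which is why that argument can only deliver a $(1-\mu_c/L)^{i/2}$-type rate, whereas retaining both residuals in $\mathcal{C}$ is exactly what yields the exponent $i$ asserted in Theorem~\ref{T2}. The remaining housekeeping --- requiring $x^0\in\mathcal{K}$ (or a preliminary projection) so that $x^0-x^\dagger\in\mathcal{C}$, and dividing by $\|x^{i+1}-x^\dagger\|_2$ only when it is nonzero --- is either flagged in your writeup or trivial.
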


\begin{remark}\label{remark_1}

We can compare our convergence result of minibatch SGD in Theorem \ref{T1} with the result for deterministic projected gradient descent in Theorem \ref{T2}. To guarantee an estimation accuracy $\|x^N - x^\dagger \|_2 \leq \varepsilon$, the deterministic proximal gradient descent needs:
 \begin{equation}
     N_{\mr{full}} = O\left( \frac{L_f}{\mu_c} \log\frac{1}{\varepsilon} \right)
 \end{equation}
iterations, while the minibatch SGD needs:
 \begin{equation}
     N_{\mr{stoc}} = O\left( \frac{L_e}{\mu_c} \log\frac{1}{\varepsilon} \right).
 \end{equation}
Hence the iteration complexity of minibatch SGD is $O(\Upsilon_e)$-times smaller than the projected gradient descent, where:
\begin{equation}
    \Upsilon_e = \frac{\frac{n}{m} N_{\mr{full}}}{N_{\mr{stoc}}} = \frac{\frac{n}{m}L_f}{L_e} 
\end{equation}
For the data-partition minibatch scheme where we partition the least-squares loss function in to $K = \frac{n}{m} $ minibatches, we know that $L_e \leq L_b$, as shown in \cite[proposition 3.7]{pmlr-v97-qian19b}. Hence we have:
\begin{equation}
    \Upsilon_e = \frac{\frac{n}{m}L_f}{L_e} \geq \frac{K L_f}{L_b},
\end{equation}
which demonstrates that for data-partition minibatch scheme, the acceleration of minibatch SGD can offer over its deterministic counterpart, is dominated by the ratio of the Lipschitz constants of the full gradient and minibatch stochastic gradient.
\end{remark}

\subsection{Evaluating the limitation of SGD-type algorithms}

We introduce a metric called the {\it Stochastic Acceleration} (SA) factor based on our theoretical analysis of minibatch SGD in the previous section. The curve for SA factor as a function of the minibatch number $K$ (for a given minibatch pattern) is able to provide a way of characterizing inherently whether for a given inverse problem and a certain partition minibatch sampling scheme,  randomized gradient methods should be preferred over the deterministic full gradient methods or not.

\begin{definition}\label{SA_1_def}

 For a given disjoint partition minibatch index $[n] = S_1 \cup S_2 \cup ... \cup S_{K} := \Bar{S}$ where $S_i \cap S_j = \emptyset, \forall i \neq j \in [K]$, with corresponding subsampling operators $[S^1, ... S^K]$, the Stochastic Acceleration (SA) factor is defined as:
\begin{equation}
   \Upsilon(A , \Bar{S}, K) = \frac{KL_f}{L_b}.
\end{equation}
\end{definition}


We next evaluate the SA factors for the least squares loss function $f(x) = \frac{1}{2n} \|Ax - b\|_2^2$ with different types of forward operator. We use the without-replacement sampling (data-partitioning) which are most applied in practice. In this case we have
\begin{equation}\label{ls1}
      f(x) = \frac{1}{2n}\|Ax - b\|_2^2 = \frac{1}{K} \sum_{k = 1}^K f_{S_k} (x) ,
\end{equation}
where,
\begin{equation}\label{lss1}
     f_{S_k} (x) := \frac{K}{2n}\|S^k A x - S^k b\|_2^2,
\end{equation}
The examples of forward operator $A$ we consider here include the space-varying deblurring ($A_\mr{blur} \in \mb{R}^{262144 \times 262144}$), a random compressed sensing matrix with i.i.d Guassian random entries (with a size $A_\mr{rand} \in \mb{R}^{500\times 2000}$), a fan beam X-ray CT operator ($A_\mr{CT} \in \mb{R}^{91240 \times 65536}$). Meanwhile, in order to contrast with the application of stochastic gradient algorithms in machine learning, we also consider linear regression problems on two machine learning datasets: RCV1 dataset ($A_\mr{rcv1} \in \mb{R}^{20242 \times 47236}$), and Magic04 ($A_\mr{magic04} \in \mb{R}^{19000 \times 50}$). 

For the X-ray CT image reconstruction example and deblurring example we use TV regularization for $g(x)$ in (\ref{obj}), while for the rest of the examples we use $\ell_1$ regularization. We vectorize the image precisely as described in section \ref{notation}. The data-partition we choose is the interleaved sampling for all the examples, where the $k$-th minibatch is formed as the following:
\begin{eqnarray}\label{interleave}
    f_{S_k} (x) &:=& \frac{K}{n} \sum_{i = 1}^{\lfloor{n/K}\rfloor} f_{k + iK}(x) \\&=& \frac{K}{2n} \sum_{i = 1}^{\lfloor{n/K}\rfloor} (a_{k + iK}^T x - b_{k + iK})
\end{eqnarray}
In Figure \ref{EA_fig}, we demonstrate the SA factors for these 5 problem instances as a function of the number of minibatches along with the empirical acceleration observed when solving these problems. From the result demonstrated in the Figure \ref{EA_fig} we find that indeed the stochastic methods have a limitation on some optimization problems like deblurring and inverse problems with random matrices, where we see that the curve for the SA factor of such problems stays low and flat even when we increase the number of minibatches. For the machine learning datasets and X-ray CT imaging, the SA factor increases rapidly and almost linearly as we increase the number of minibatches, which is in line with observations in machine learning on the superiority of SGD and also the observation in CT image reconstruction of the benefits of using the ordered-subset methods \cite{erdogan1999ordered,kim2015combining} which are similar to stochastic gradient methods.

\begin{figure}[t] 
	\centering	
   { \includegraphics[width=.495\textwidth]{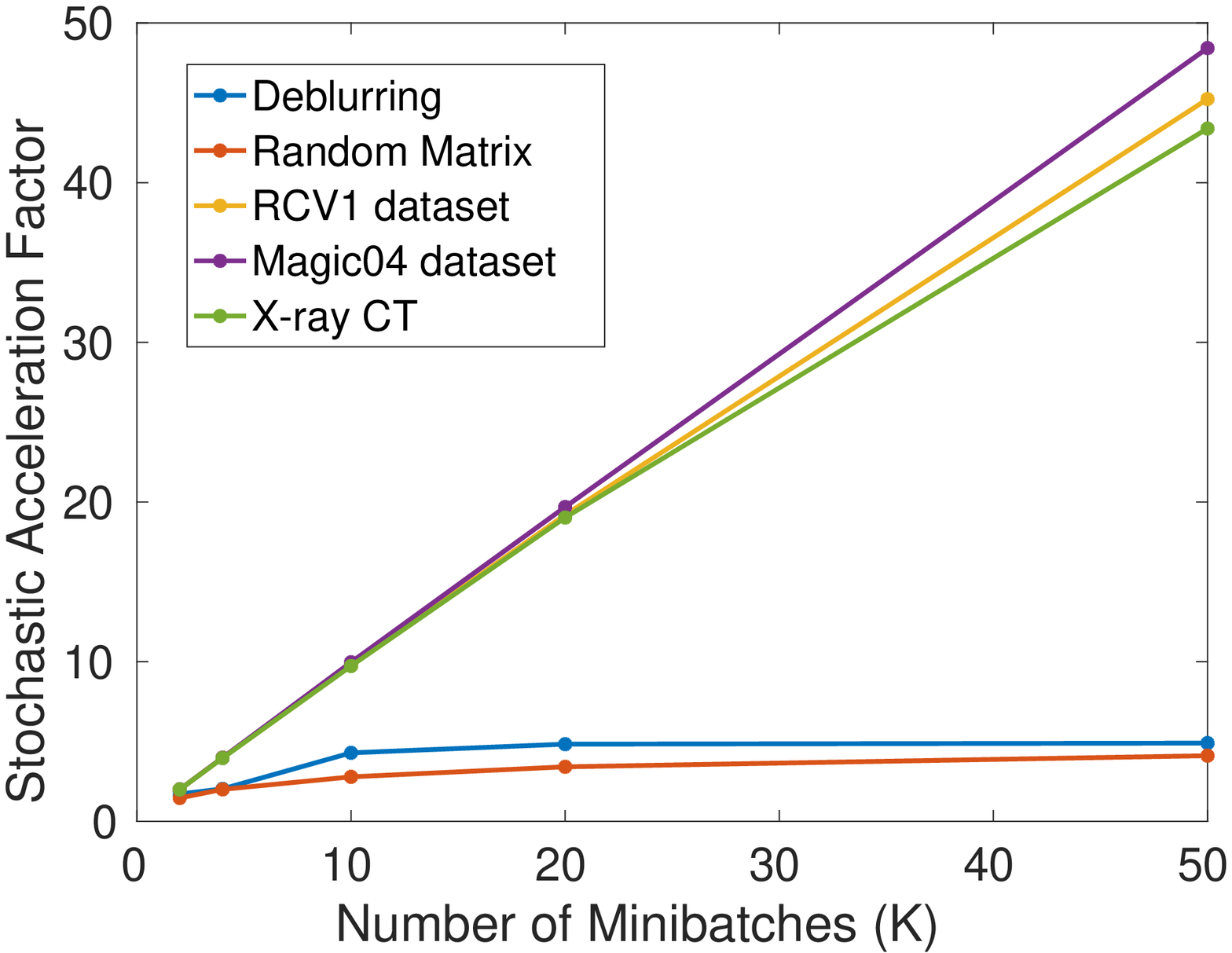}}
    {\includegraphics[width=.495\textwidth]{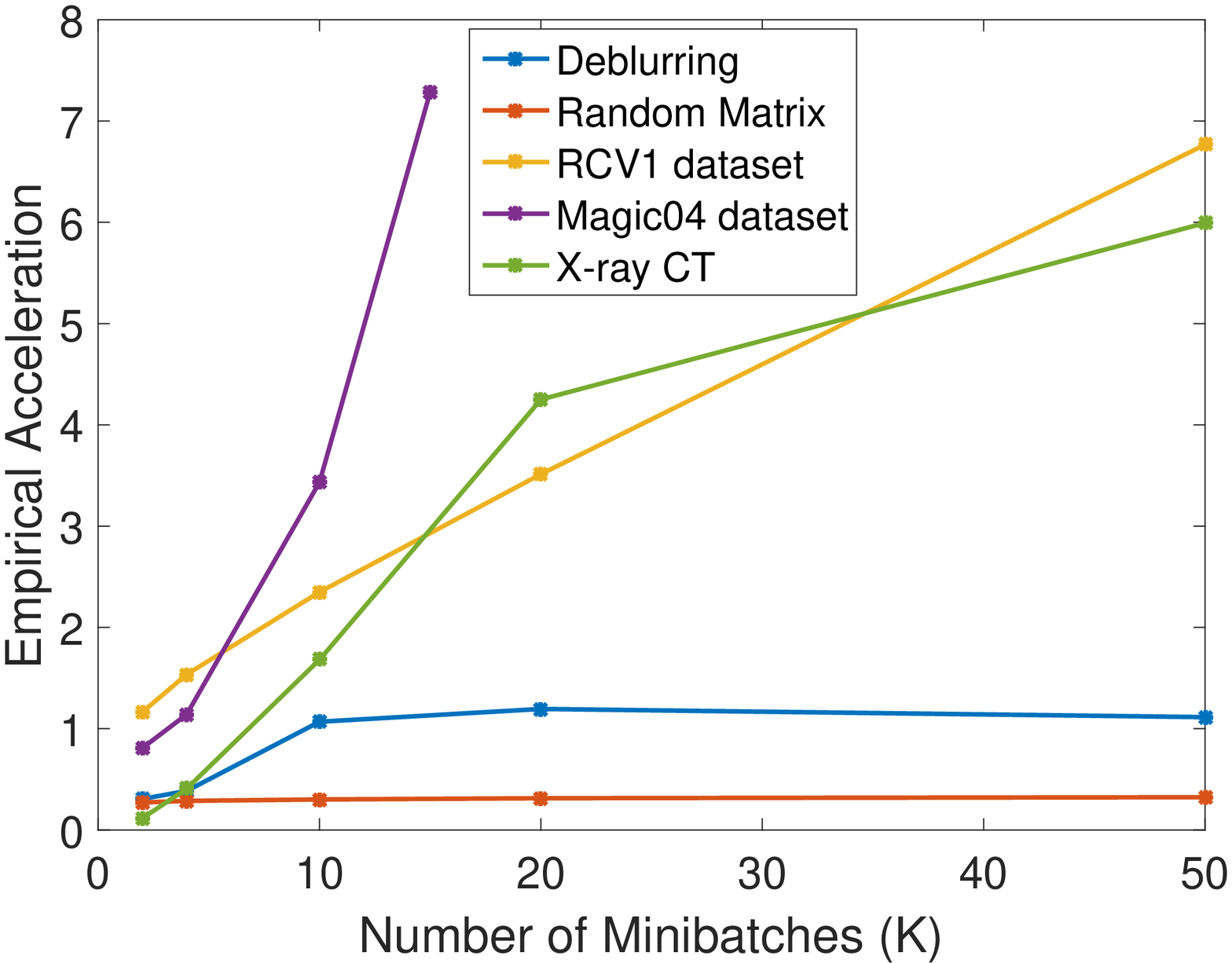}}
	\caption{  (a) Stochastic Acceleration (SA) function of inverse problems with different forward operators. (b) Empirical observation comparing the objective gap convergence of Katyusha and FISTA algorithm in 15 epochs.}
	\label{EA_fig}
\end{figure}

The curves for the SA factor on the first figure qualitatively predict the empirical comparison result\footnote{We compare the objective-gap convergence of FISTA and Katyusha for a fixed number of datapasses (epochs).} of the Katyusha and FISTA algorithms shown on the second, where we observe that Katyusha offers no acceleration over the FISTA on either the deblurring or the Gaussian random inverse problem, but significantly outperforms FISTA on the other cases. Indeed, positive results for applying SGD-type algorithms on these problems are well-known already \cite{bottou2010large, xiao2014proximal,erdogan1999ordered}. Hence we have shown that the SA factor we propose is useful in characterizing whether an inverse problem is inherently a suitable candidate for stochastic gradient methods for a given partition.

\subsection{Local coherence structure, eigenspectrum, and stochastic acceleration}

From the results we have obtained so far, we now go deeper to investigate the relationship of the SA factor and the structure of the forward operator of the inverse problem.

Subsequently we will assume that each partition has an equal size $m$ for the simplicity of presentation. We will find the following definition of the {\it local-accumulated-coherence} to be useful.
\begin{definition}[Local-Accumulated-Coherence]
 Give a partition $[n] = S_1 \cup S_2 \cup ... \cup S_{K} := \Bar{S}$ for $A = [a_1;a_2;...;a_n]$ which 
\begin{equation}\label{local_coherence}
    \mu_\ell(A, \Bar{S}, K) = \max_{q \in [K]} \max_{j \in S_q} \sum_{k \in S_q} |\langle a_j, a_k \rangle|.
\end{equation}
\end{definition}
Our definition of the local cumulative coherence is similar to and more general than the one presented in \cite{tropp2004greed} and related works, but we do not require the rows to be normalized and the summation includes the term $|\langle a_j, a_j \rangle|$. The local-accumulated-coherence captures the the correlation characteristic between the linear measurements within each partitioned minibatches. As we will see, if a partition have a smaller local accumulated coherence than another partition, then it typically can have a better SA factor. Our analysis in this subsection is based on the Gersgorin disk theorem:
\begin{theorem}[Gersgorin disk theorem \cite{horn2012matrix}]
Every eigenvalue of a Hermitian matrix $H \in \mb{R}^{n \times n}$ lives in a union of disks:
\begin{equation}
    G(H) = \bigcup_{i = 1}^n \left\{ x \in \mb{R} : |x - H_{ii}| \leq \sum_{j \neq i} |H_{ij}| \right\}
\end{equation}
\end{theorem}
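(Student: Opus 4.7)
The plan is to prove this classical theorem by a short direct argument using eigenvectors; although the statement in the excerpt is for Hermitian $H$, the same argument applies to any square matrix, so no symmetry or positivity structure is actually used. Fix any eigenvalue $\lambda$ of $H$ with a corresponding nonzero eigenvector $v \in \mathbb{R}^n$ satisfying $Hv = \lambda v$. The goal is to exhibit an index $i \in [n]$ such that $|\lambda - H_{ii}| \leq \sum_{j \neq i}|H_{ij}|$, which places $\lambda$ in the $i$-th disk of $G(H)$.

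First I would choose the coordinate where the eigenvector is largest in modulus: pick $i \in [n]$ with $|v_i| = \max_{j \in [n]} |v_j|$. Since $v \neq 0$, this maximum is strictly positive, which is the only technical ingredient the proof needs (it permits division by $|v_i|$ at the last step). Then I would read off the $i$-th row of the equation $Hv = \lambda v$, obtaining $\sum_{j} H_{ij} v_j = \lambda v_i$, and isolate the diagonal contribution to get
\begin{equation*}
(\lambda - H_{ii})\, v_i \;=\; \sum_{j \neq i} H_{ij}\, v_j.
\end{equation*}

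The key step is to bound the right-hand side using the triangle inequality and then replace each $|v_j|$ by the maximum $|v_i|$:
\begin{equation*}
|\lambda - H_{ii}|\, |v_i| \;\leq\; \sum_{j \neq i} |H_{ij}|\, |v_j| \;\leq\; |v_i| \sum_{j \neq i} |H_{ij}|.
\end{equation*}
Dividing both sides by $|v_i| > 0$ yields the desired inclusion $|\lambda - H_{ii}| \leq \sum_{j \neq i} |H_{ij}|$, so $\lambda$ belongs to the $i$-th Gersgorin disk and hence to $G(H)$.

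There is no genuine obstacle in the argument; the only care required is ensuring $|v_i| > 0$ before dividing, which is automatic once one picks the coordinate of maximum modulus. The index $i$ is allowed to depend on the eigenvalue $\lambda$, which is why the conclusion is stated as a union over all $i$ rather than a single disk. Since the argument uses only the triangle inequality and the structure of the eigenvalue equation, the same proof applies verbatim to any complex square matrix, which is why the Hermitian assumption in the statement can be relaxed in the references the paper cites.
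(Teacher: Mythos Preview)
Your proof is correct and is the standard textbook argument for the Gersgorin disk theorem. The paper, however, does not actually prove this statement at all: it is quoted as a classical result with a citation to Horn and Johnson and is used only as a tool in the proof of the lower bounds for the SA factor. So there is no paper proof to compare against; your argument matches the standard proof in the cited reference.
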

The Gersgorin disk theorem relates a square symmetric matrix's eigenvalues with its entries, and links to the gradient-Lipschitz constant $L_b$ -- which in the least-squares context can be written as:
\begin{equation}
    L_b = \frac{K}{n} \max_{k \in [K]}\|S^kA (S^k A)^T\|
\end{equation}
in linear inverse problems. With this we can lower-bound the SA factors with the ratio of $\frac{\|A\|^2}{\mu_\ell}$.

\begin{theorem}[Lower bounds for $\Upsilon(A, \Bar{S}, K)$]\label{Thm1_lower_bound}
The SA factor for any linear inverse problem with $f(x) = \frac{1}{2n}\|Ax - b\|_2^2$ is lower bounded as:
\begin{eqnarray}\label{lower_bound_SA}
    \Upsilon(A, \Bar{S}, K) &\geq& \alpha_\ell(A, \Bar{S}, K) := \frac{\|A\|^2}{\mu_\ell(A, \Bar{S}, K)} \\&\geq& \alpha_u(A, K) := \frac{K\|A\|^2}{n \|A^T\|_{1 \rightarrow 2}^2}\\ &\geq& \alpha_s(A, K) := \frac{K\cdot\sigma(A^TA, 1)}{\rho \cdot \sum_{i = 1}^d \sigma(A^TA, i)},
\end{eqnarray}
where $\rho \in [1, n]$ satisfies:
\begin{equation}\label{ass_b}
    \frac{\max_{i \in [n]} \|a_i\|_2^2}{\frac{1}{n}\sum_{j = 1}^n\|a_j\|_2^2} \leq \rho.
\end{equation}
\end{theorem}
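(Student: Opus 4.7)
The plan is to rewrite the SA factor in a form that exposes its dependence on the spectral norms of the minibatched Gram sub-matrices, and then bound each of these norms in turn via Gersgorin, Cauchy--Schwarz, and a trace identity to obtain the three inequalities in order.

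First I would make explicit the Lipschitz constants: for the least-squares loss $L_f = \|A\|^2/n$ and, using the definition of $f_{S_k}$ in (\ref{lss1}), $L_b = (K/n)\max_{k \in [K]}\|S^k A (S^k A)^T\|$. Substituting into Def.~\ref{SA_1_def} cancels the factors of $K/n$, giving
\begin{equation*}
\Upsilon(A,\Bar{S},K) = \frac{\|A\|^2}{\max_{k \in [K]}\|S^k A (S^k A)^T\|}.
\end{equation*}
The remaining task is purely one of upper-bounding each minibatch Gram norm by progressively coarser quantities.

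For $\alpha_\ell$ I would apply Gersgorin to the positive semidefinite matrix $H^{(k)} := S^k A (S^k A)^T$, whose $(i,j)$-entry for $i,j \in S_k$ is $\langle a_i, a_j\rangle$. Since every eigenvalue lies in the union of disks centered at $H^{(k)}_{ii} = \|a_i\|_2^2 \geq 0$ with radius $\sum_{j \in S_k, j \neq i}|\langle a_i, a_j\rangle|$, and the author's definition of $\mu_\ell$ absorbs the diagonal term $|\langle a_j, a_j\rangle|$ into the sum, taking the maximum over $k$ yields $\max_k \|H^{(k)}\| \leq \mu_\ell(A,\Bar{S},K)$ immediately. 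For $\alpha_u$, I would use that each partition block $S_k$ contains exactly $m = n/K$ indices, so Cauchy--Schwarz gives
\begin{equation*}
\sum_{j \in S_k}|\langle a_i, a_j\rangle| \;\leq\; \sum_{j \in S_k}\|a_i\|_2\|a_j\|_2 \;\leq\; \frac{n}{K}\,\max_{i \in [n]}\|a_i\|_2^2 \;=\; \frac{n}{K}\,\|A^T\|_{1\to 2}^2,
\end{equation*}
uniformly in $i \in S_k$ and $k \in [K]$, so $\mu_\ell \leq (n/K)\|A^T\|_{1\to 2}^2$ and rearranging gives $\alpha_\ell \geq \alpha_u$.

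Finally, for $\alpha_s$, I would invoke the two spectral identities $\|A\|^2 = \sigma(A^TA,1)$ and $\sum_{i=1}^d \sigma(A^TA,i) = \operatorname{tr}(A^TA) = \sum_{j=1}^n \|a_j\|_2^2$. Condition (\ref{ass_b}) rearranges to $n\|A^T\|_{1\to 2}^2 \leq \rho \sum_j\|a_j\|_2^2$, so substituting into the denominator of $\alpha_u$ yields $\alpha_u \geq \alpha_s$, completing the chain. There is no genuine technical obstacle here; the only point that requires care is matching the author's convention that $\mu_\ell$ includes the diagonal self-inner-products, so that the Gersgorin bound appears in the clean form $\max_i \sum_j |H_{ij}|$ rather than the usual split between the diagonal and the off-diagonal sum.
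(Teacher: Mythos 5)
Your proof is correct and takes essentially the same route as the paper's: Gersgorin applied to the minibatch Gram matrices $S^kA(S^kA)^T$ (with the diagonal term $\|a_i\|_2^2\geq 0$ absorbed so the bound reads $\max_i\sum_j|H_{ij}|$) for the first inequality, Cauchy--Schwarz over the $n/K$ indices of each block for the second, and the trace identity $\sum_{i=1}^d\sigma(A^TA,i)=\|A\|_F^2=\sum_j\|a_j\|_2^2$ combined with (\ref{ass_b}) for the third. There are no gaps.
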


We provide the proof in Appendix \ref{Ad5}. Note that most inverse problems we encounter usually satisfy (\ref{ass_b}) with $\rho = O(1)$, since unlike machine learning applications where we may often have outliers in datasets, most inverse problems are well-designed with measurements having similar amount of energy. The second and the third inequalities are partition-independent and reveal a strong relationship between the SA factor and the spectral properties of the forward operator. The third inequality in (\ref{lower_bound_SA}) clearly states that, {\it if a linear inverse problem which satisfies (\ref{ass_b}) with $\rho = O(1)$ has a Hessian with a fast-decaying eigenspectrum, it can be guaranteed to have good SA factors}. They are also tight bounds if we do not impose additional structural assumptions on the forward operator -- if $A$ has identical rows we have $\Upsilon(A, \Bar{S}, K) = \frac{K\|A\|^2}{n \|A^T\|_{1 \rightarrow 2}^2} = K$,  noting that $\|A^T\|_{1 \rightarrow 2}^2 = \max_{i \in [n]} \|a_i\|_2^2$. If $\rho = 1$ which means that rows of $A$ have the same $\ell_2$ norm, we have $\alpha_u(A, K) = \alpha_s(A, K)$.

 We have derived partition independent lower bounds for $\Upsilon(A, \Bar{S}, K)$. However, these lower bounds, by definition have to cover the worst case of partition. Hence for some inverse problems which admit inferior partitions, these may be crude lower bounds. It is therefore insightful to derive a lower bound for the case where we randomly partition the data. We provide the following lower bound using the Matrix Chernoff inequality and the union bound, following a similar argument by \cite[Proposition 3.3]{needell2014paved}. We present the proof in Appendix \ref{Ad7}.
 
 \begin{theorem}[Lower bounds for $\Upsilon(A, \Bar{S}, K)$ for a random partition]\label{Thm_rand par}
  If $\Bar{S}$ is a uniform random partition, then for $K \in \left[ \frac{\|A\|^2}{\|A^T\|_{1 \rightarrow 2}^2}, \min(n,d)\right]$, the following lower bounds hold:
  \begin{eqnarray}\label{lower_bound_rand_par}
      \Upsilon(A, \Bar{S}, K) &\geq& \alpha_r(A, K, \delta) := \frac{1}{\frac{1}{K} + \delta \cdot \frac{\|A^T\|_{1 \rightarrow 2}^2}{\|A\|^2}}\\ &\geq& \alpha_\sigma(A, K, \delta) := \frac{1}{\frac{1}{K} + \delta \cdot \frac{\rho}{n}\cdot \frac{\sum_{i = 1}^d \sigma(A^TA, i)}{\sigma(A^TA, 1)}} 
  \end{eqnarray}
  with probability at least: $1 - d^2 \left( \frac{e}{\delta} \right)^{\delta }$.
 \end{theorem}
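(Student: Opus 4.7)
The plan is to translate the claim into a tail bound on $\max_k \|S^k A\|^2$ and then apply a matrix Chernoff inequality to each minibatch followed by a union bound. Writing $f(x) = \tfrac{1}{2n}\|Ax-b\|_2^2$ and $f_{S_k}(x) = \tfrac{K}{2n}\|S^k Ax - S^k b\|_2^2$, the respective gradient-Lipschitz constants are $L_f = \|A\|^2/n$ and $L_b = \tfrac{K}{n}\max_k \|S^k A\|^2$, whence
\[
\Upsilon(A,\bar{S},K) \;=\; \frac{K L_f}{L_b} \;=\; \frac{\|A\|^2}{\max_k \|S^k A\|^2} \;=\; \frac{\|A\|^2}{\max_k \lambda_{\max}\!\left(\sum_{i\in S_k} a_i a_i^T\right)}.
\]
It therefore suffices to show that with probability at least $1 - d^2(e/\delta)^{\delta}$,
\[
\max_k \lambda_{\max}\!\left(\sum_{i\in S_k} a_i a_i^T\right) \;\le\; \frac{\|A\|^2}{K} + \delta\,\|A^T\|_{1\rightarrow 2}^2,
\]
since inverting and multiplying by $\|A\|^2$ then gives $\Upsilon \ge \alpha_r(A,K,\delta)$.

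For a uniform random partition, the marginal law of any individual $S_k$ is uniform over the $m$-subsets of $[n]$ with $m = n/K$. Thus, for fixed $k$, $Y_k := \sum_{i\in S_k} a_i a_i^T$ is a sum of rank-one PSD matrices sampled \emph{without replacement} from $\{a_i a_i^T\}_{i=1}^n$. Its mean satisfies $\lambda_{\max}(\E Y_k) = (m/n)\|A\|^2 = \|A\|^2/K =: \mu$, and the uniform per-summand operator-norm bound is $R := \max_i \|a_i\|_2^2 = \|A^T\|_{1\rightarrow 2}^2$. Tropp's matrix Chernoff inequality for sampling without replacement then yields, for any $t \ge e\mu$,
\[
\Pr\!\left[\lambda_{\max}(Y_k) \ge t\right] \;\le\; d\,\left(\frac{e\mu}{t}\right)^{t/R}.
\]

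Now set $t = \mu + \delta R$. The lower cutoff $K \ge \|A\|^2/\|A^T\|_{1\rightarrow 2}^2$ is exactly the statement $\mu \le R$, from which $e\mu/t \le e/\delta$ and $t/R \ge \delta$. For $\delta \ge e$ the base $e/\delta$ is $\le 1$, so raising to a larger exponent only shrinks the bound and $(e\mu/t)^{t/R} \le (e/\delta)^{\delta}$; hence $\Pr[\lambda_{\max}(Y_k) \ge t] \le d(e/\delta)^{\delta}$. A union bound over the $K \le \min(n,d) \le d$ minibatches produces the advertised failure probability $Kd(e/\delta)^{\delta} \le d^2(e/\delta)^{\delta}$, establishing the first inequality $\Upsilon \ge \alpha_r$.

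The second inequality $\alpha_r \ge \alpha_\sigma$ is deterministic and follows from assumption (\ref{ass_b}): $\|A^T\|_{1\rightarrow 2}^2 = \max_i \|a_i\|_2^2 \le (\rho/n)\sum_j \|a_j\|_2^2 = (\rho/n)\|A\|_F^2 = (\rho/n)\sum_{i=1}^d \sigma(A^T A, i)$. Dividing by $\|A\|^2 = \sigma(A^T A, 1)$ and substituting into the definition of $\alpha_r$ only enlarges the denominator, so $\alpha_r \ge \alpha_\sigma$. The principal subtlety is invoking matrix Chernoff in the without-replacement regime (Tropp's bounds are known to be no worse there than in the i.i.d.\ case) and verifying that the lower cutoff on $K$ is precisely what makes the clean tail $d(e/\delta)^{\delta}$ available; the remaining steps are routine algebra.
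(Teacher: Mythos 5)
Your proposal is correct and follows essentially the same route as the paper's proof in Appendix G: apply the matrix Chernoff inequality to each minibatch Gram matrix with the threshold $t = \|A\|^2/K + \delta\,\|A^T\|_{1\rightarrow 2}^2$, use the lower cutoff $K \geq \|A\|^2/\|A^T\|_{1\rightarrow 2}^2$ to reduce the tail to $d(e/\delta)^{\delta}$, take a union bound over the at most $d$ blocks, and finish the second inequality deterministically via assumption (\ref{ass_b}). The only cosmetic differences are that you use the simplified $(e\mu/t)^{t/R}$ form of the Chernoff tail (and make explicit the mild requirement $\delta \geq e$ and the without-replacement caveat), whereas the paper works with the $(1+\delta_0)$ parametrization and the same substitution $\delta_0 = \delta K\|A^T\|_{1\rightarrow 2}^2/\|A\|^2$.
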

 
 \begin{remark}
 This lower bound for random partition scheme again demonstrates the strong relationship between SA factor and the ratio $\frac{\|A^T\|_{1 \rightarrow 2}^2}{\|A\|^2}$ which is controlled by the eigenspectrum $\frac{\sum_{i = 1}^d \sigma(A^TA, i)}{\sigma(A^TA, 1)}$. Note that due to the Matrix Chernoff inequality \cite{tropp2012user}, this theorem holds with a probability $1 - d^2 \left( \frac{e}{\delta} \right)^{\delta }$, which is dimension-dependent, and meanwhile we also need to note that the theorem covers only the regime where $K$ is sufficiently large. For a fixed dimension $d$, the smaller the ratio $\frac{\|A^T\|_{1 \rightarrow 2}^2}{\|A\|^2}$, or rather, the faster the eigenspectrum decays, the larger SA factors will be for the number of minibatch $K$ within the range $\left[ \frac{\|A\|^2}{\|A^T\|_{1 \rightarrow 2}^2}, \min(n,d)\right]$.

 To be more specific, if we demand here $1 - d^2 \left( \frac{e}{\delta} \right)^{\delta } \geq 0.9$, we can compute for a given $\delta$ the maximum allowed $d$ for the Theorem \ref{Thm_rand par} to hold with this probability via the following bound:
 \begin{equation}
     d \leq \sqrt{\frac{1}{10 \left( \frac{e}{\delta} \right)^{\delta }}}.
 \end{equation}
 We list the values of $\sqrt{\frac{1}{10 \left( \frac{e}{\delta} \right)^{\delta }}}$ for a range of $\delta$ in table \ref{table_d}.
 \begin{table}[h]
\caption{Maximum allowed $d$ for Theorem \ref{Thm_rand par} to hold with probability at least 0.9}
\label{table_d}
\vskip 0.15in
\begin{center}
\begin{small}
\begin{sc}
\begin{tabular}{lccccccr}
\hline
$\delta$ & 15 & 17 & 19 & 21 & 23 & 25\\
\hline
Max. $d$   & $1.16 \times 10^5$  &  $1.85 \times 10^6$ & $3.32 \times 10^7$ & $6.65 \times 10^8$ & $1.46 \times 10^{10}$ & $3.51 \times 10^{11}$\\
\hline
\end{tabular}
\end{sc}
\end{small}
\end{center}
\vskip -0.1in
\end{table}

 Moreover, as we will show, qualitatively this works but using a smaller $\delta$ seems to better describe what we see in the SA factor for random partition. On the other hand, this result requires the number of minibatches $K$ to be sufficiently large ($K \in \left[ \frac{\|A\|^2}{\|A^T\|_{1 \rightarrow 2}^2}, \min(n,d)\right]$), however in our numerical experiments we can observe that for small values of $K$ the lower bound $\alpha_r(A, K, \delta)$ still provides a reasonably good estimate. Similar restrictions occur in \cite[Proposition 3.3]{needell2014paved} which is also based on the Matrix Chernoff inequality. Whether such a restriction can be technically removed is an interesting open question.
 \end{remark}

Meanwhile, we can also have an upper bound for the SA factor, independent of the partition $\Bar{S}$, in terms of the eigenspectrum of the Hessian matrix $A^TA$. This upper bound can be derived from a standard result \cite[Theorem 4.3.15]{horn2012matrix} using the fact that the matrix $(S^k A)^T S^k A$ and $S^k A(S^k A)^T$ share the same non-zero eigenvalues. We denote the $i$-th large eigenvalue of a Hermitian matrix $H$ as $\sigma(H, i)$, and the upper bound is written as the following:
\begin{theorem}[Upper bound for $\Upsilon(A, \Bar{S}, K)$]\label{Thm_upper_bound}
The SA factor for any linear inverse problem with $f(x) = \frac{1}{2n}\|Ax - b\|_2^2$ is upper bounded as:
 \begin{equation}\label{upp_b}
    \Upsilon(A, \Bar{S}, K) \leq \beta(A, K) := \frac{\sigma\left(A^TA, 1\right)}{\sigma\left(A^TA, {\lfloor{n - \frac{n}{K} + 1}\rfloor}\right)},
\end{equation}
for any possible partition $\Bar{S}$.
\end{theorem}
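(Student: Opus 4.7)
The plan is to unpack the definition of the SA factor and reduce the claim to a purely spectral statement about $A^TA$. Writing $m := n/K$ (an integer by the convention of Section~\ref{notation}), the Lipschitz constants are $L_f = \sigma(A^TA,1)/n$ and $L_b = (K/n)\max_{k \in [K]} \|S^kA(S^kA)^T\|$, so
\begin{equation*}
\Upsilon(A,\bar S,K) \;=\; \frac{K L_f}{L_b} \;=\; \frac{\sigma(A^TA,1)}{\max_{k \in [K]} \|S^kA(S^kA)^T\|}.
\end{equation*}
Thus (\ref{upp_b}) is equivalent to the assertion that at least one block satisfies $\|S^kA(S^kA)^T\| \geq \sigma(A^TA,\, n - m + 1)$, and I would aim to establish this lower bound on $\|S^kA(S^kA)^T\|$ uniformly in $k$.

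First I would invoke the standard fact that $S^kA(S^kA)^T$ and $H_k := (S^kA)^T(S^kA)$ share their nonzero spectra, so their operator norms coincide; this lets me work with the $d\times d$ positive semidefinite matrix $H_k$ rather than the $m\times m$ one. Since $S^kA$ has only $m$ rows, $\mathrm{rank}(H_k)\leq m$. Because $\bar S$ is a disjoint partition of $[n]$, we have the clean additive decomposition
\begin{equation*}
A^TA \;=\; \sum_{k=1}^{K} H_k,
\end{equation*}
and for each fixed $k$ the residual $A^TA - H_k = \sum_{j\neq k} H_j$ is positive semidefinite with rank at most $(K-1)m = n-m$.

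The main step is Weyl's eigenvalue inequality (the content of Horn--Johnson Theorem~4.3.15): for Hermitian $X,Y$, $\lambda_{i+j-1}(X+Y) \leq \lambda_i(X) + \lambda_j(Y)$. Applying this with $X = H_k$, $Y = A^TA - H_k$, $i=1$, and $j = n-m+1$, and using that $\lambda_{n-m+1}(Y) = 0$ since $\mathrm{rank}(Y) \leq n-m$, I obtain
\begin{equation*}
\sigma(A^TA,\, n-m+1) \;\leq\; \sigma(H_k, 1) \;=\; \|S^kA(S^kA)^T\|.
\end{equation*}
Since this holds for every $k$, it holds in particular for the $k$ attaining the maximum, and taking reciprocals (both sides are positive whenever $\sigma(A^TA, n-m+1)>0$; otherwise the bound is vacuous since $\beta(A,K)=+\infty$) and multiplying by $\sigma(A^TA,1)$ yields exactly (\ref{upp_b}).

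There is really no hard obstacle here: once one recognizes the $A^TA = \sum_k H_k$ decomposition with each $H_k$ of rank at most $m$, the result is an immediate application of Weyl. The only subtlety is the order-of-eigenvalues bookkeeping: one must ensure that the rank budget $(K-1)m = n-m$ is used correctly so that the $(n-m+1)$-st eigenvalue of the residual vanishes, which is where the index $\lfloor n - n/K + 1 \rfloor$ appears on the right-hand side. No interlacing or probabilistic tool is needed, and no assumption on $A$ beyond those already in force.
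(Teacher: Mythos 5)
Your proof is correct. It arrives at the same pivotal inequality as the paper --- that every block satisfies $\|S^kA(S^kA)^T\| \geq \sigma(A^TA,\, n-m+1)$, after which the identity $\Upsilon = \sigma(A^TA,1)/\max_k\|S^kA(S^kA)^T\|$ finishes the job --- but via a different key lemma. The paper views $S^kA(S^kA)^T = S^k(AA^T){S^k}^T$ as an $m\times m$ principal submatrix of the $n\times n$ Gram matrix $AA^T$, invokes the inclusion principle of Horn--Johnson (Theorem 4.3.15), $\sigma(H_m,1)\geq \sigma(H,n-m+1)$, and then transfers the bound to $A^TA$ using equality of the nonzero spectra. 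You instead stay in the $d\times d$ domain, writing $A^TA = H_k + \sum_{j\neq k}H_j$ with $H_k=(S^kA)^TS^kA$, and apply Weyl's inequality together with the rank bound $\mathrm{rank}\bigl(\sum_{j\neq k}H_j\bigr)\leq n-m$ and positive semidefiniteness of the residual to kill its $(n-m+1)$-st eigenvalue. These are essentially dual formulations of the same fact (the inclusion principle is Weyl applied to a low-rank PSD perturbation), and both arguments actually use only that the complement of $S_k$ contains $n-m$ rows, so neither is more general than the other. Two small merits of your write-up: you make explicit the degenerate case $\sigma(A^TA,n-m+1)=0$, where the bound is vacuous, which the paper leaves implicit, and you spell out why the residual's eigenvalue vanishes (PSD plus rank), a point the inclusion principle hides inside the cited theorem.
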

We include the proof in Appendix \ref{Ad6} for completeness. The upper bound (\ref{upp_b}) suggests that, if the Hessian matrix $A^TA$ has slowly-decaying eigenvalues at the tail, it indeed typically cannot have a large SA factor, no-matter how delicately we partition the forward operator $A$. The upper bound and the lower-bounds jointly suggest that, having a fast-decaying eigenspectrum of the Hessian is a sufficient and necessary condition for an inverse problem to have good SA factors.

\subsubsection{Numerical examples}\label{discussion_bounds}

\begin{figure}[t] 
	\centering	
  {  \includegraphics[width=.47\textwidth]{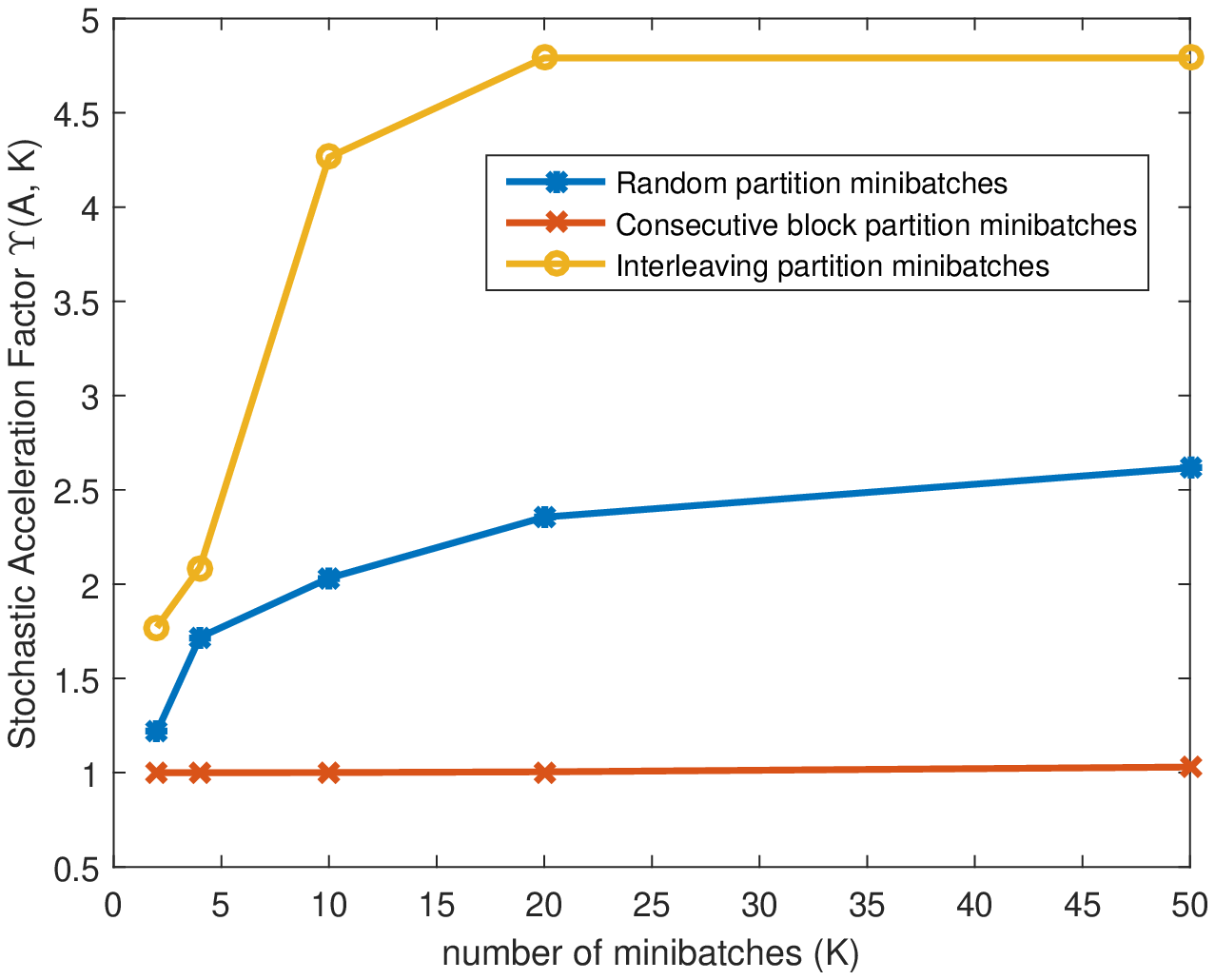}}
   { \includegraphics[width=.47\textwidth]{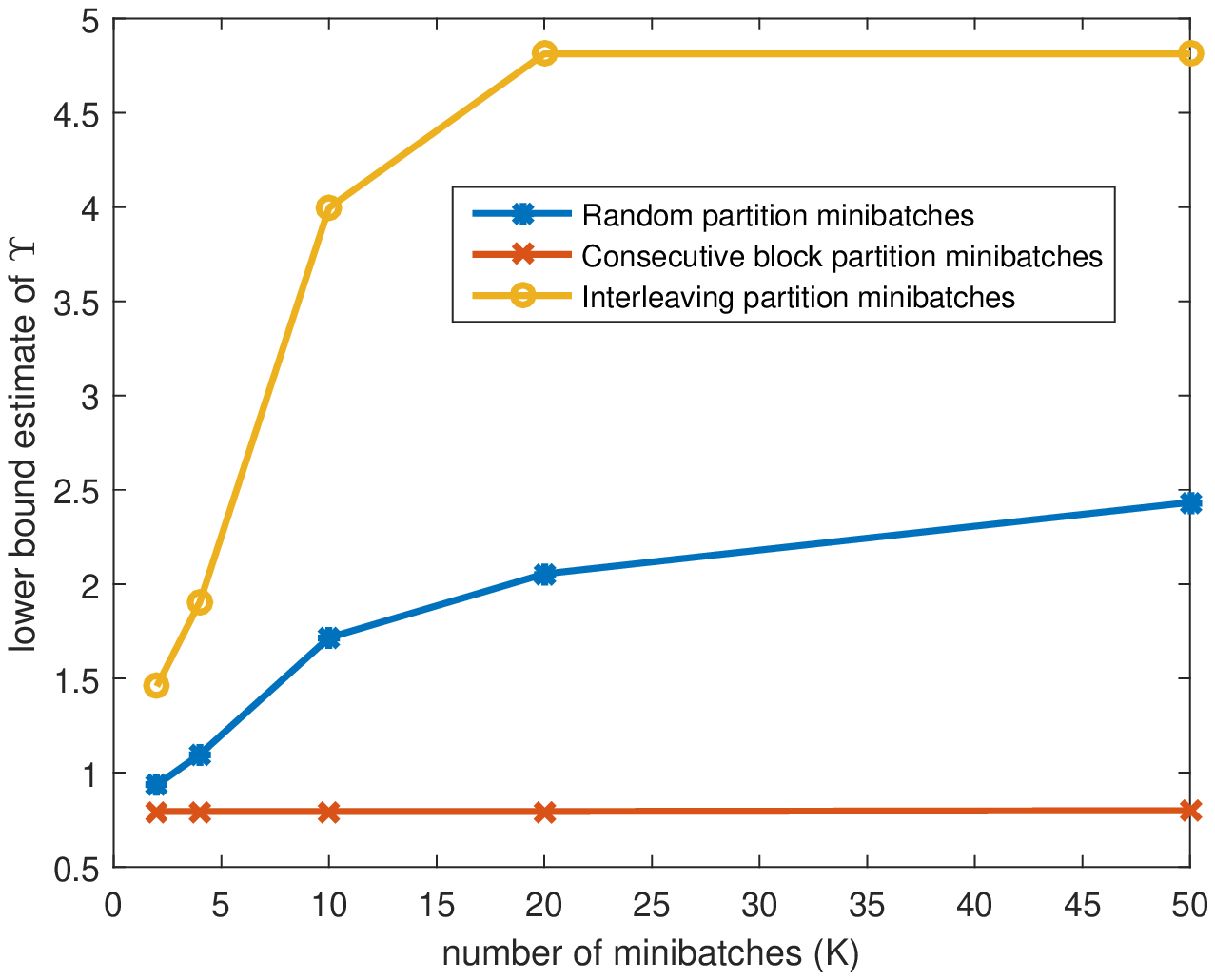}}  \\     {\includegraphics[width=.47\textwidth]{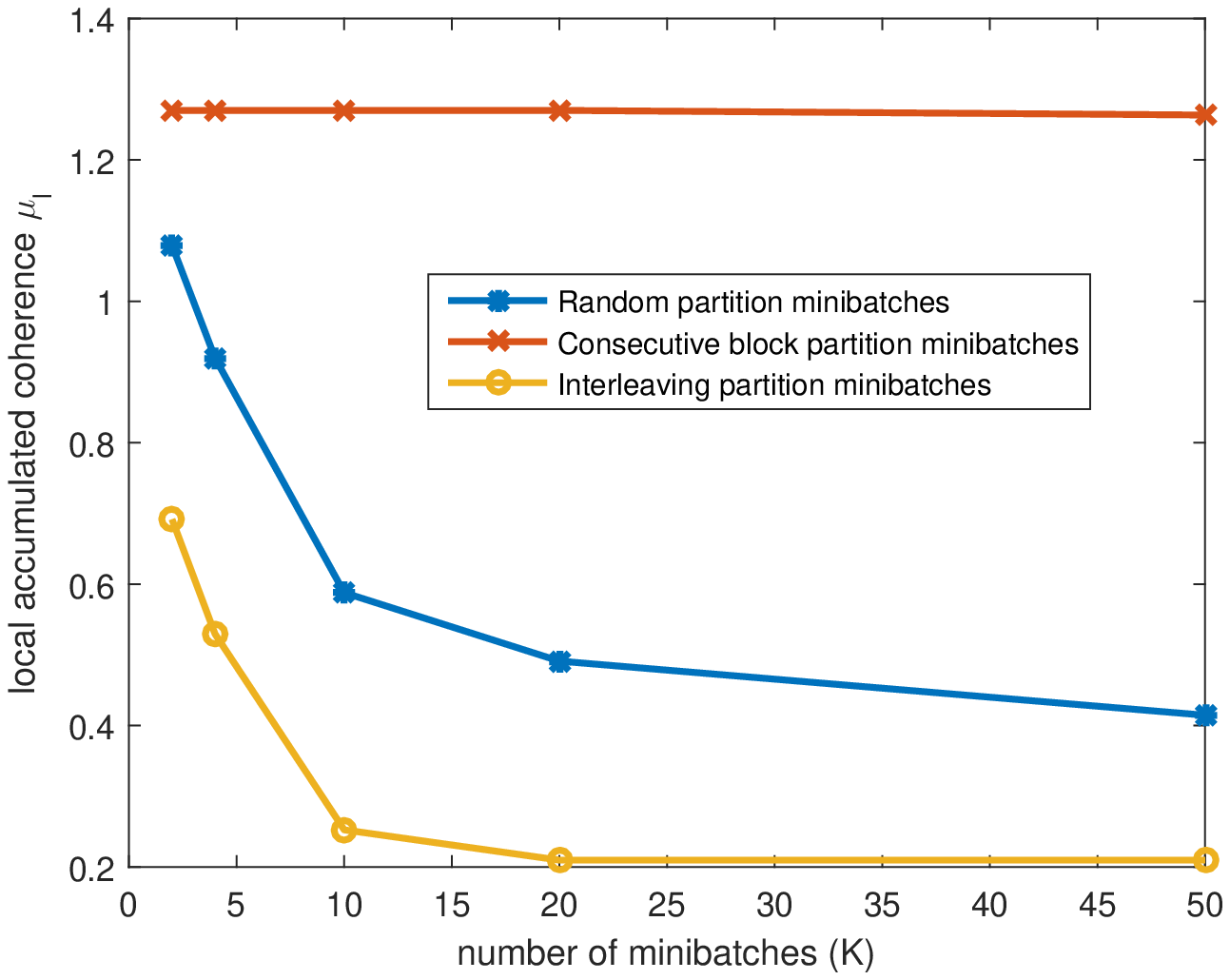}}
	\caption{  (a) Stochastic Acceleration (SA) function under different partition choices for space-varying deblurring task.  (b) lower bound estimate of SA factors via local accumulated coherence. (c) local accumulated coherence for each partition scheme.}
	\label{EA_2f}
\end{figure}

A key result in this analysis is the lower bound which relates the stochastic acceleration factor with the local coherence/correlation structure of the given partition:
\begin{equation}
    \Upsilon(A, \Bar{S}, K) := \frac{KL_f}{L_b} \geq \frac{\|A\|^2}{\mu_\ell(A, \Bar{S}, K)}.
\end{equation}
An immediate conclusion we can have is that, for a given linear inverse problem with forward operator $A$, and a partition $\Bar{S}$, {\it the smaller the local accumulated coherence is, the larger the SA factor will be}. For some inverse problems, judiciously choosing the partition for minibatches is important -- good choices of partitioning can have small local coherence and hence lead to larger SA factors in practice. 

\begin{figure}[t] 
	\centering	
  {\includegraphics[width=.38\textwidth]{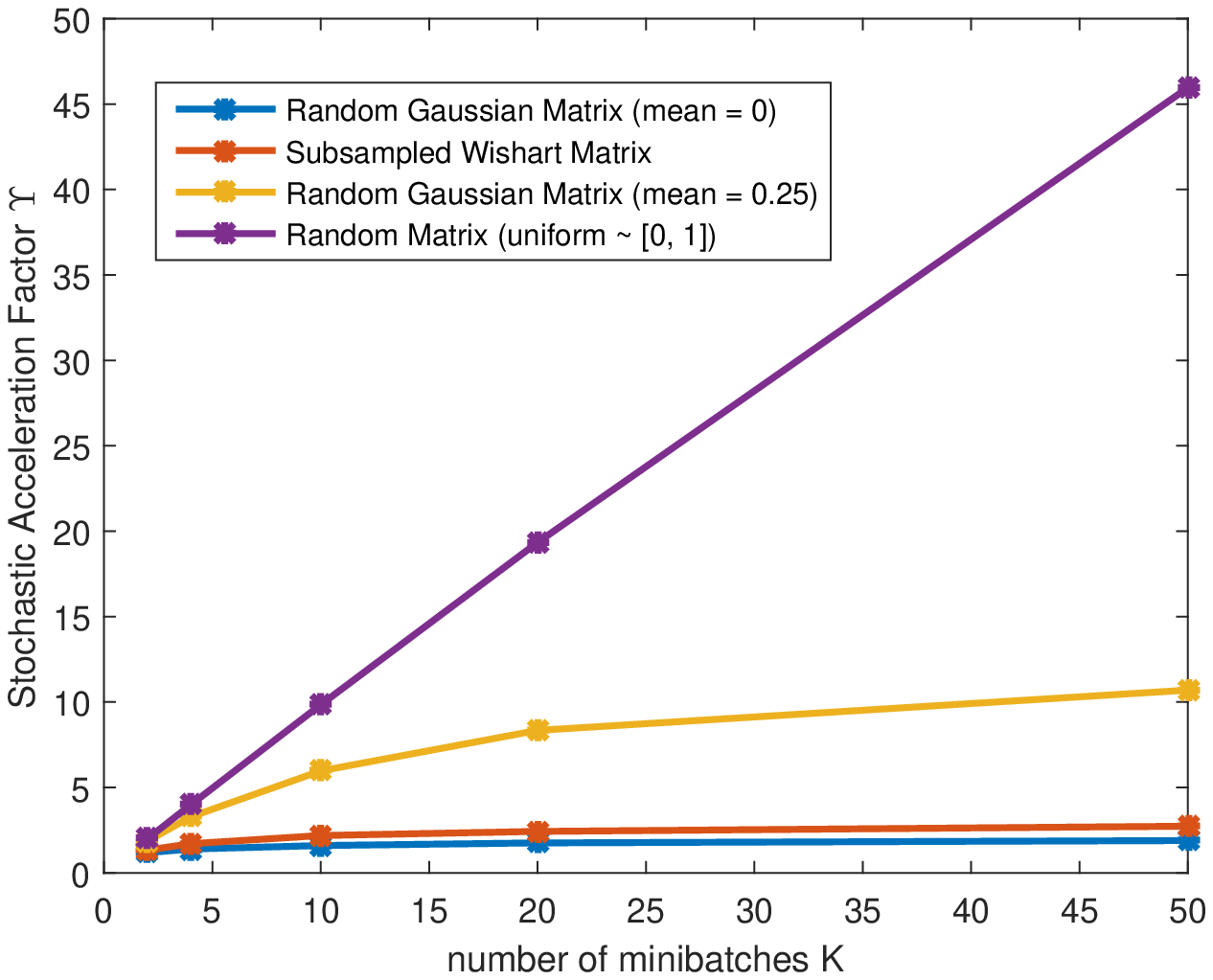}} 
  {\includegraphics[width=.38\textwidth]{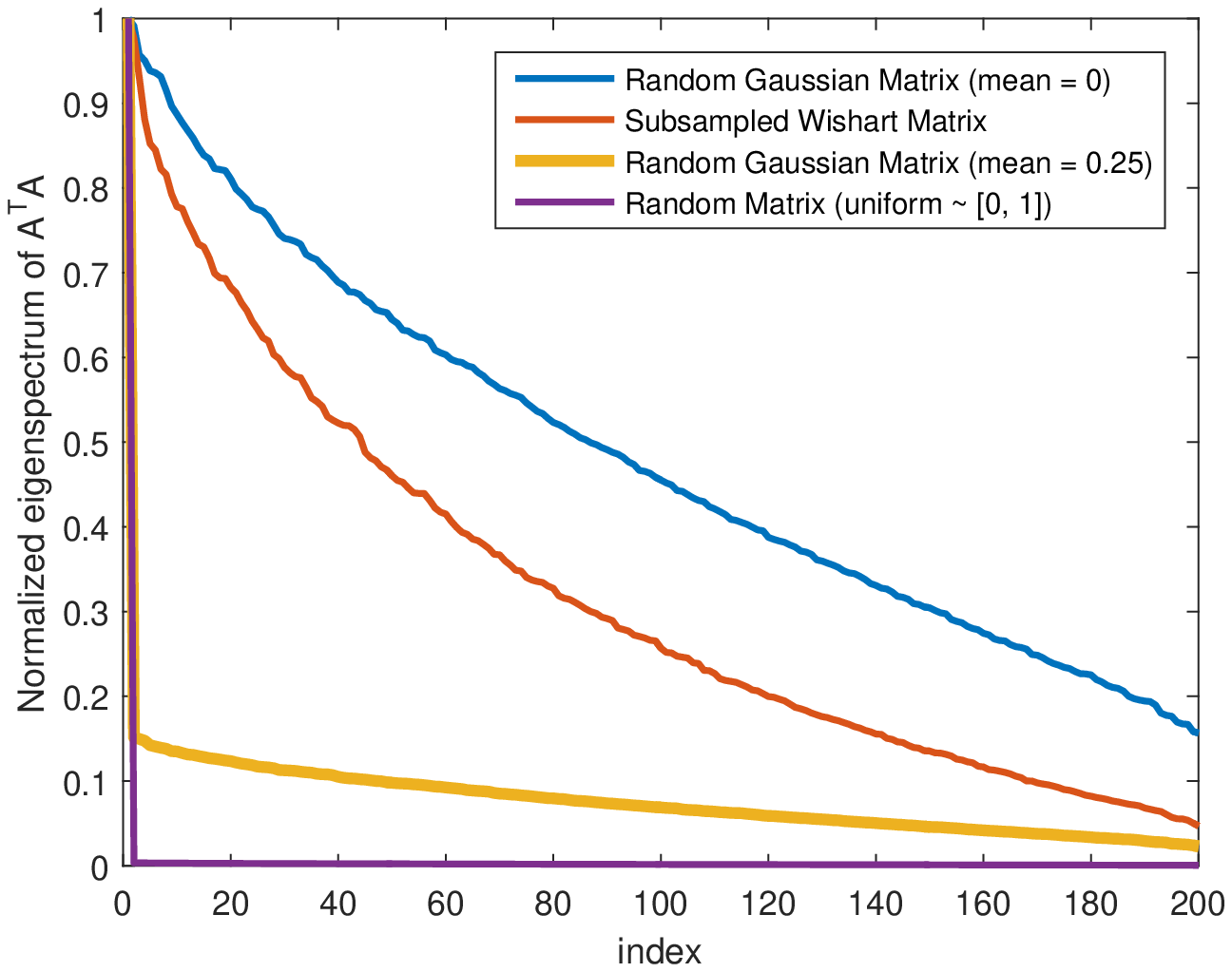}} \\   
  {\includegraphics[width=.38\textwidth]{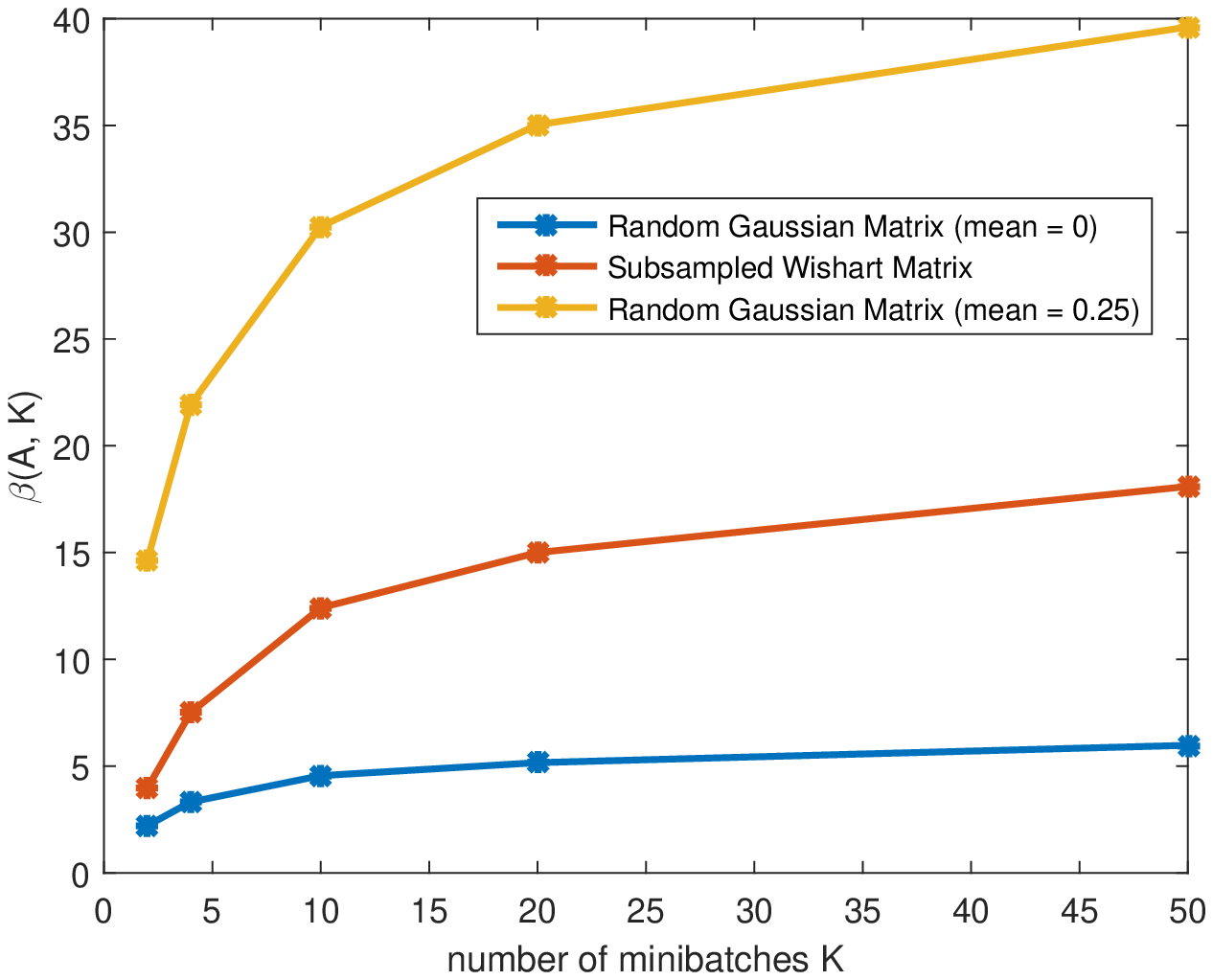}}
     {\includegraphics[width=.38\textwidth]{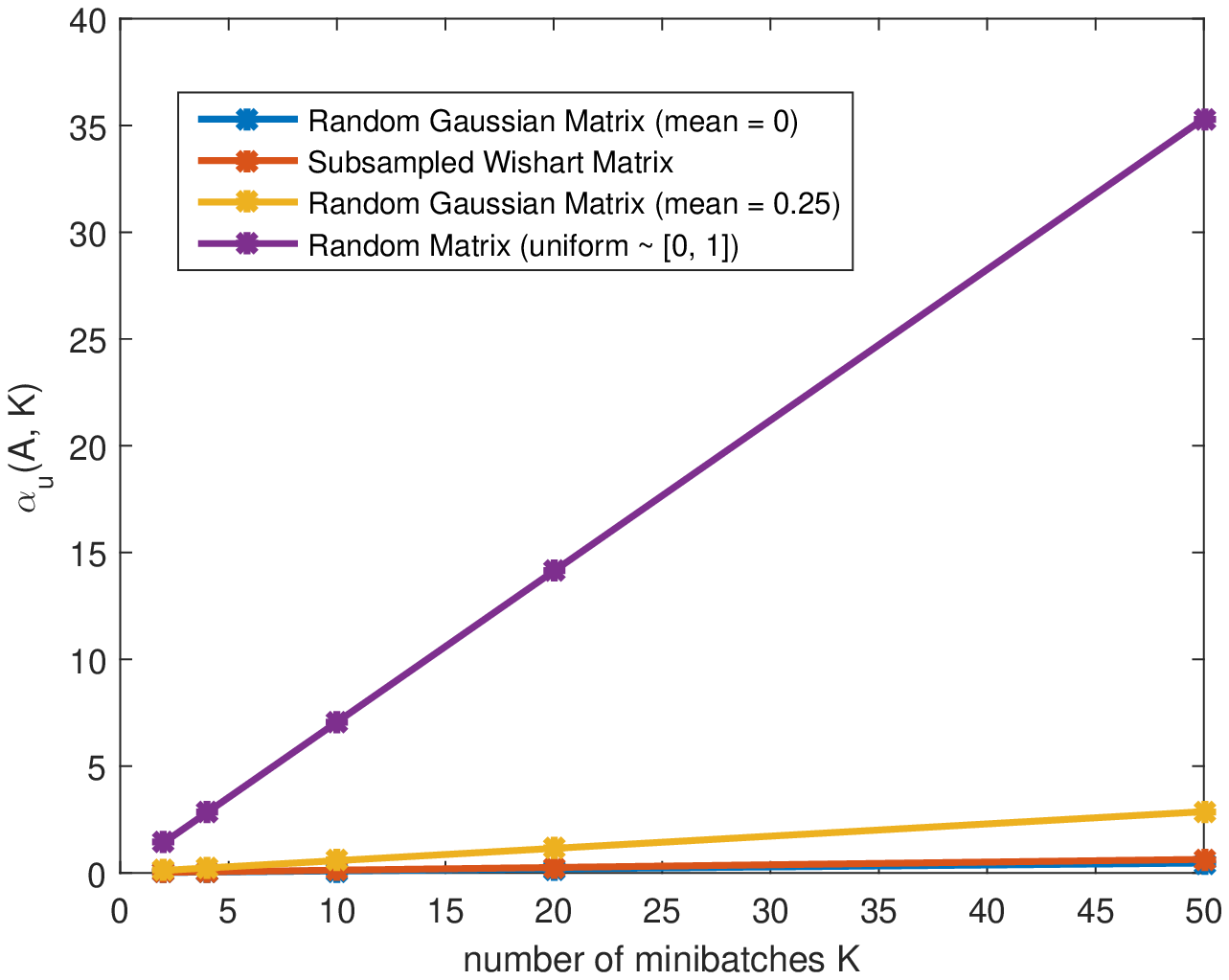}}\\
    {\includegraphics[width=.38\textwidth]{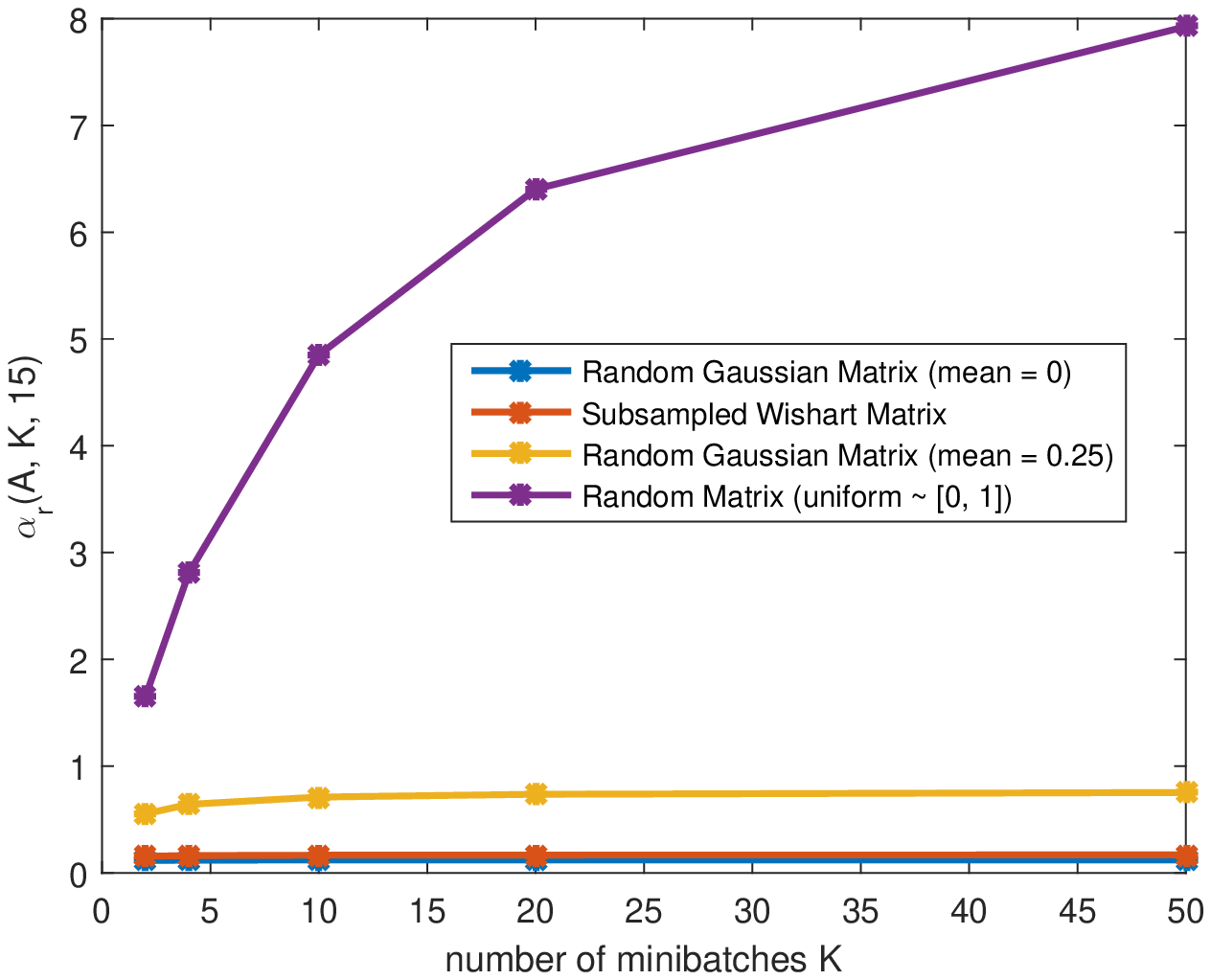}}
    {\includegraphics[width=.38\textwidth]{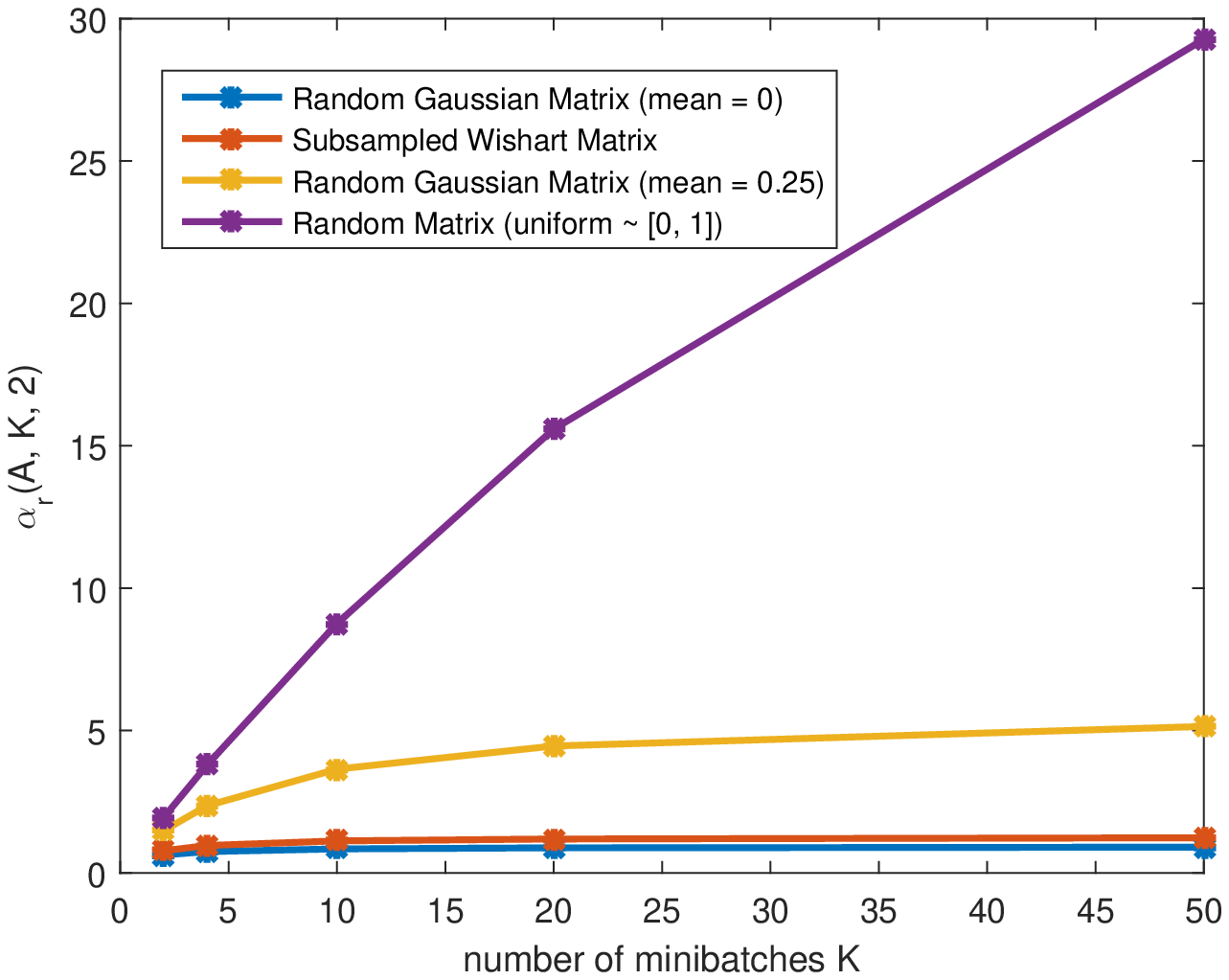}}
	\caption{  Stochastic Acceleration (SA) factors, eigenspectrum, and lower/upper bound estimates for forward operators (random matrices sized 200 by 1000) with different distributions. We use random partitioning here.}
	\label{EA_3f}
\end{figure}

In Figure \ref{EA_2f}, we test three different partition schemes for our running example of space-varying deblurring, the interleaving partitioning which we have described before, the random partitioning where we generate the partition index randomly without replacement, and the consecutive block partitioning\footnote{which is basically $S_k := [m(k-1)+1, m(k-1), m(k-1)+1, ..., mk - 1, mk], \forall k \in [K]$.}, where we directly partition the forward operator $A$ into $K$ consecutive blocks and form the minibatches. We can clearly observe that, the interleaving partitioning in this case provide the smallest local coherence, and hence its SA factors are the largest. While consecutive block partitioning leads to the largest local coherence, and it offers no stochastic acceleration at all. The lower bound estimate $\alpha_\ell(A, \Bar{S}, K)$ of SA factors are actually very accurate in this case, as shown in the Figure \ref{EA_2f}(b).

 \begin{figure}[t] 
	\centering
    {\includegraphics[width=.49\textwidth]{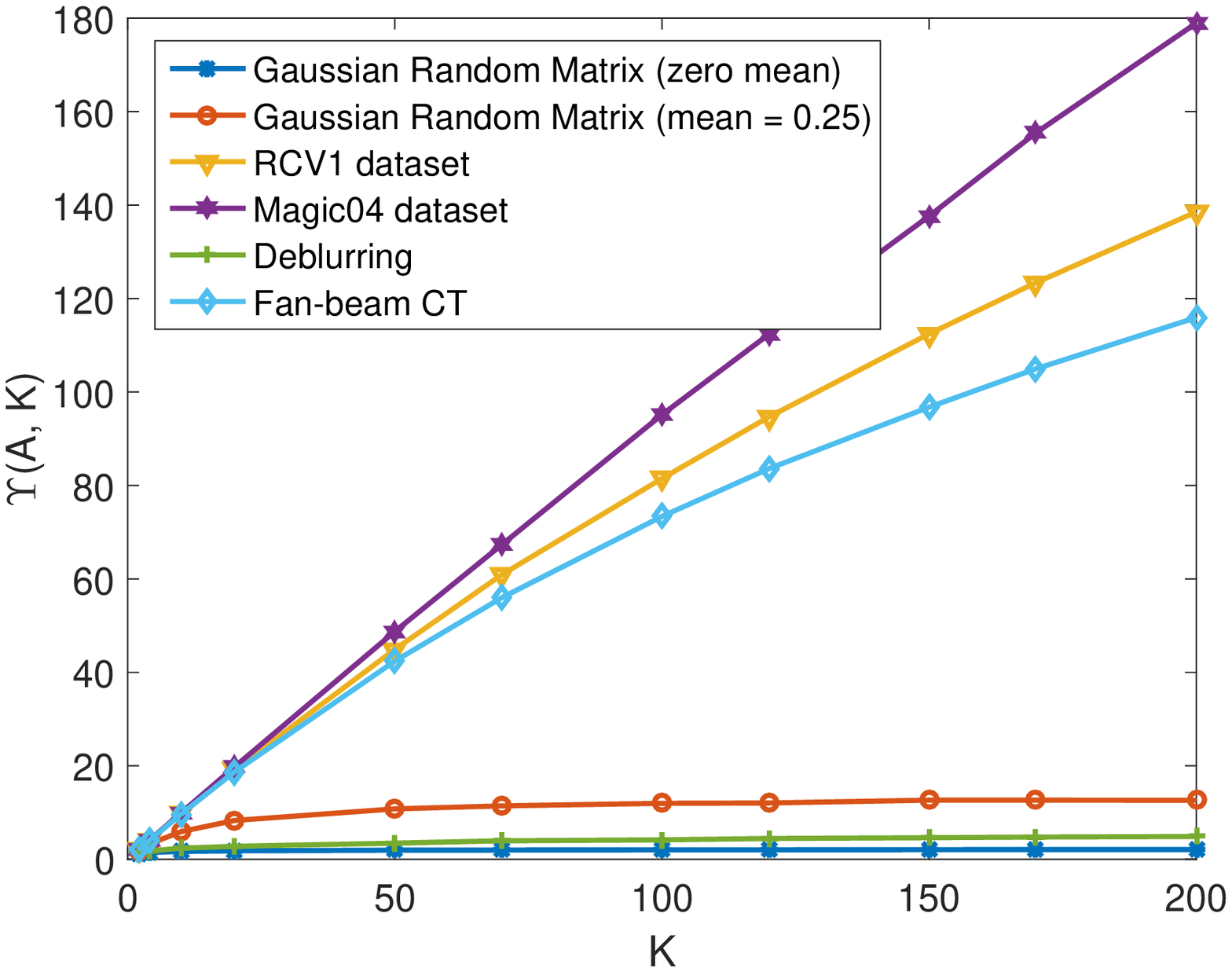}}\\
    {\includegraphics[width=.49\textwidth]{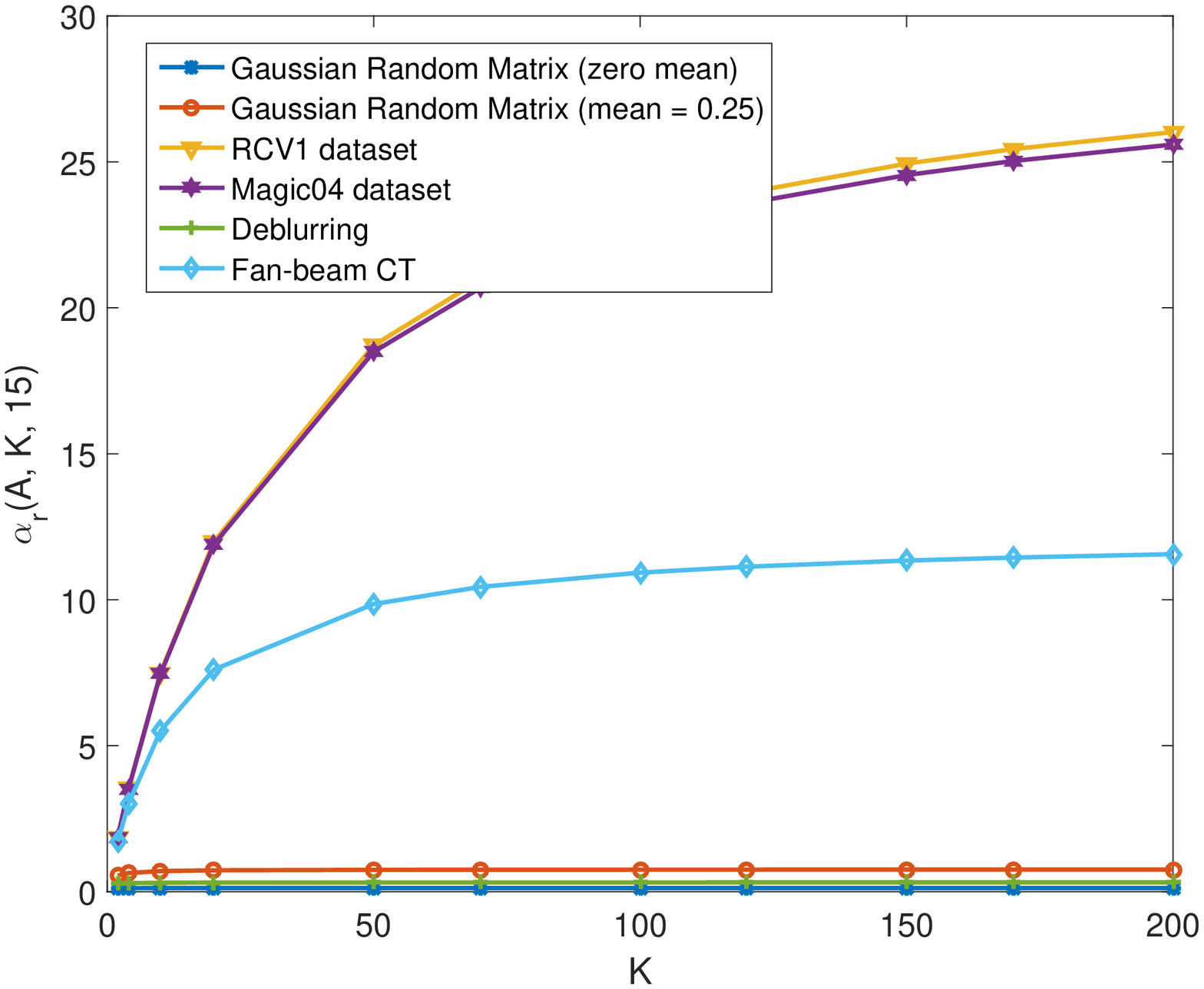}}
    {\includegraphics[width=.49\textwidth]{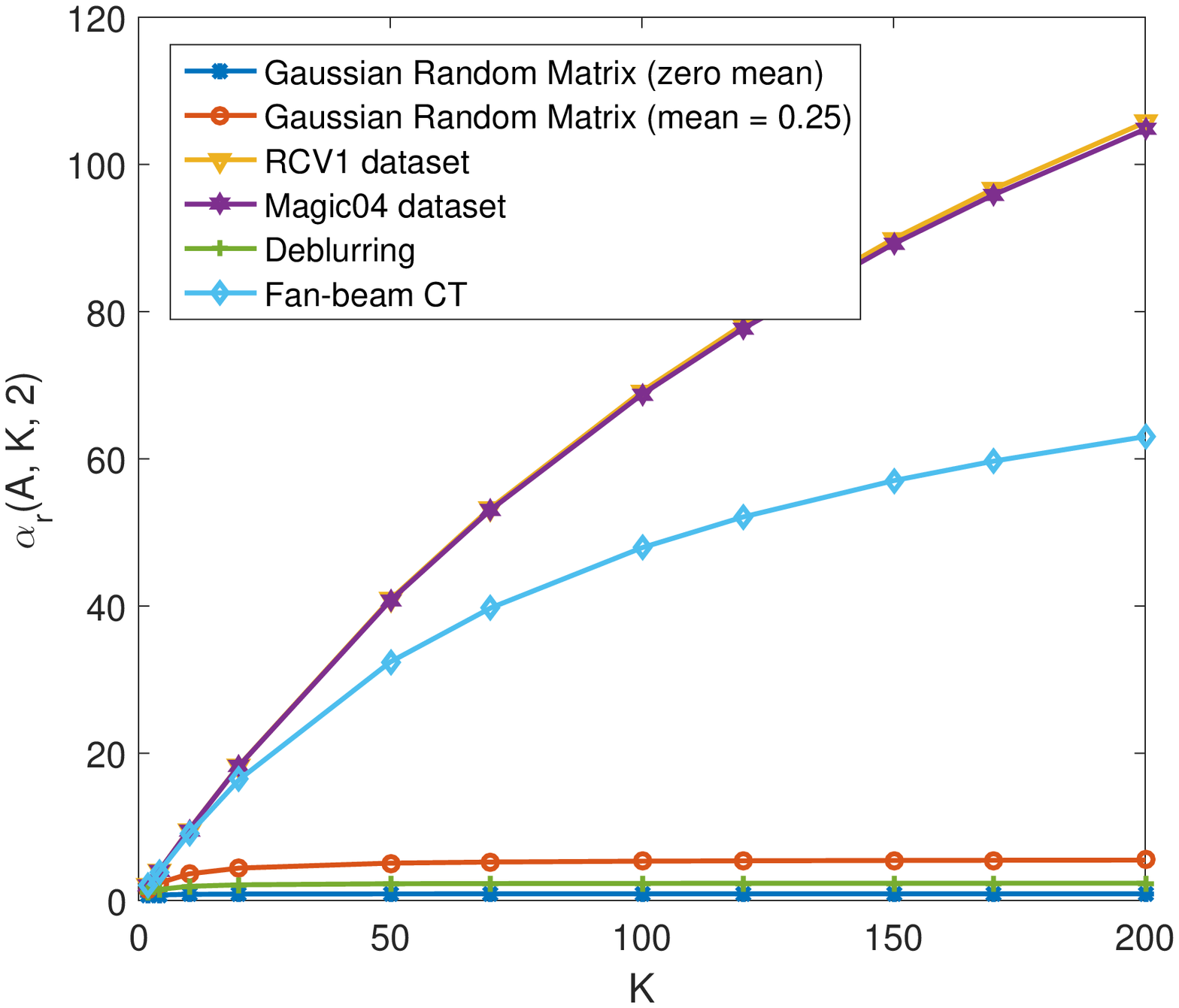}}
	\caption{  SA factors, and lower bound estimates of inverse problems with different forward operators for random partition minibatches.}
	\label{saf_rand_plot}
\end{figure}

In Figure \ref{EA_3f} we present a simulation result where we generate 4 random matrices of the same size $n = 200, d = 1000$ with different distributions and check the relationship of their SA factors and the eigenspectrum of their Hessian matrices. The forward operators we generate are:
\begin{itemize}
    \item (1) random Gaussian matrix with each entry drawn from a Gaussian distribution with zero-mean and unit-variance; 
    \item (2) subsampled Wishart matrix; 
    \item (3) random Gaussian matrix with each entry drawn from a Gaussian distribution with 0.25-mean and unit-variance;
    \item (4) random matrix with each entry drawn from a uniform distribution supported on the interval $[0, 1]$.
\end{itemize}
 From the experimental result we can observe that, these four forward operators have very different decay-rates on their Hessians' eigenspectrum, and correspondingly, very different SA factors. The case (4) has the fastest decay-rate, and has the largest SA factors and it grows almost linearly as the number of minibatches increases. The case (1) has the slowest decay-rate on the eigenspectrum, and correspondingly, it has the worst SA factors among the 4 cases. This numerical result is in broad agreement with our analysis.
 
 We also test our lower bound $\alpha_r(A, K, \delta)$ for all the examples we have considered. We first compute the lower bound estimate $\alpha_r(A, K, \delta)$ by (\ref{lower_bound_rand_par}) for different forward operators, with the choice of $\delta = 15$ which is sufficient for the lower bound to hold with probability at least $0.9$ for all these forward operators. We present the result in Figure \ref{saf_rand_plot}(b) and compare it with the SA factors presented in Figure \ref{saf_rand_plot}(a). We find that our theoretically justified lower bound is still able to distinguish well whether a given inverse problem is suitable or not for stochastic optimization, but seems to be very conservative for the choice of $\delta$. Interestingly, if we heuristically reduce $\delta$ from $15$ to $2$, then we can actually obtain a much better lower bound estimate for the SA factors, as shown in Figure \ref{saf_rand_plot}(c).

\subsubsection{Concluding remark}\label{con_bo} 

In short, the take-home message of our analysis in this subsection contains following aspects:

\begin{itemize}
    \item Firstly, for linear inverse imaging problems, in order to have a good SA factors, the forward operator $A$ should have a small ratio of $\frac{\|A^T\|_{1 \rightarrow 2}^2}{\|A\|^2}$, which essentially means that the Hessian matrix $A^TA$ should have a relatively fast-decaying eigenspectrum. 
    \item Meanwhile, optimizing the choice of partitioning can be crucial in some inverse problems -- a judiciously chosen partition scheme can significantly improve the SA factor if the forward operator has local incoherence structure.
    \item The lower bounds, particularly the one for the random partition\footnote{If one wishes to use the relaxed lower bound estimate, the only need is to compute $\frac{\|A\|^2}{ \|A^T\|_{1 \rightarrow 2}^2}$ which can be very efficiently obtained.} (\ref{lower_bound_rand_par}) can be readily applied by practitioners to conveniently evaluate whether a given inverse problem is suitable for using stochastic gradient methods as solvers. The first lower bound (\ref{lower_bound_SA}) can also be used to compare between different partition schemes and select the best one among them\footnote{One may also achieve this goal by directly computing $L_f$ and $L_b$ with the power method or its accelerated and stochastic variants for each of the compared partition schemes.}. However, finding the best partitioning for a given inverse problem is a combinatorial problem, and we do not currently have any generic scheme which is guaranteed to solve this for arbitrary inverse problems -- we leave this as an open problem for future work. Note that, in this section we have mainly focused on the data-partition minibatch scheme, while we also find that similar results is also valid for random with-replacement minibatch schemes, and we refer the readers to Appendix \ref{Ad4} for details.
\end{itemize}

\section{A practical accelerated stochastic gradient method for imaging}

So far we have considered the role of the Lipschitz constants (and associated step-sizes) in determining the potential advantages of stochastic over deterministic gradient methods in inverse problems. However there are other aspects that complicate the analysis. The most obvious one is that stochastic gradient methods formulated in the primal domain need to calculate the proximal operator many more times than full gradient methods and hence slow down dramatically the run time. There are also scenarios, (see e.g. \cite{ulas2017robust,ulas2017accelerated}), where more than one non-smooth regularization term may be desired, where most of the existing fast stochastic methods such as Katyusha are inapplicable. Here we present a new SGD formalism that aims to mitigate this problem. 

\begin{algorithm}[t]\label{A2}
   \caption{ Accelerated Primal-Dual SGD  (Acc-PD-SGD)}
\begin{algorithmic}[1]
   \State{Initialization: $x^0 = v^0 = v^{-1} \in \mathrm{dom} (g)$, the step size sequences $\alpha_{(.)}, \eta_{(.)}, \theta_{(.)}$, $l = 0$, and the local coherence sampling $\Bar{S}$.}
\For{$t = 1$ {\bfseries to} $N_0$ }
     \State {$x^t \leftarrow \frac{(3t - 2) v^{t - 1} + t x^{t - 1} - (2t - 4) v^{t - 2}}{2t + 2}$, $x_0 \leftarrow x^t$, \\ \ \ \ \ \ $z_0 \leftarrow x^t$, $y_0 \leftarrow Dx_0$  {  \hfill $\rightarrow$ Katyusha-X Momentum}}
     \For{$k = 1$ {\bfseries to} $N_1$ }
      \State{$l \leftarrow  l + 1$}
        \State{$y_{k + 1} = \mathrm{prox}_{\lambda g^*}^{\alpha_l} (y_k + \alpha_l D z_k)$ {  \hfill $\rightarrow$ Dual Ascent}}
        \State{$\mathrm{Pick}$\ \  $i \in [1, 2, ... K]$\ \ $\mathrm{uniformly\ at\ random}$}
        \State{$\triangledown_{k} = \triangledown f_{S_i}(x_k)$} ; 
        \State{$x_{k + 1} = \mathrm{prox}_{\gamma h}^{\eta_l} \left( x_k - \eta_l (D^Ty_{k + 1} + \triangledown_{k} ) \right)$ {  \hfill $\rightarrow$ Primal Descent}}        
        \State{$z_{k + 1} = x_{k + 1} + \theta_l (x_{k + 1} - x_{k})$   {  \hfill $\rightarrow$ Innerloop Momentum}}
     \EndFor
     \State{$ v^t \leftarrow x_K  $}
\EndFor
\State {$\mathrm{Return}\ x^t$}
\end{algorithmic}
\end{algorithm}

We consider in this section the following optimization problem:
\begin{equation}\label{obj_l}
   x^\star  \in  \min_{x \in \mathbb{R}^d} \left\{ \frac{1}{n}\sum_{i = 1}^n f_i(a_i, b_i, x) + \lambda g(Dx) + \gamma h(x)\right\},
\end{equation}
where $f(x) = \frac{1}{n}\sum_{i = 1}^n f_i(a_i, b_i, x)$ is the data-fidelity term, $g(Dx)$ is a regularization term with a linear operator -- for example the TV regularization ($g(.) = \|.\|_1$, $D \in \mathbb{R}^{r \times d}$ is the differential operator), and $h(x)$ is a second convex regularizer. For simplicity of presentation, we only consider here the three-composite case. This is without the loss of generality, since it is well-known in the literature that if an operator-splitting algorithm can solve (\ref{obj_l}) with a certain convergence rate, then it can be immediately extended to tackle multiple non-smooth regularizers by an additional proximal averaging step, with the same convergence rate in the three-composite case \cite{chambolle2016ergodic, zhao2018stochastic, davis2017three}. The saddle-point formulation can be written as:
\begin{equation}\label{saddle}
   [x^\star, y^\star] = \min_{x \in \mathbb{R}^{d}}  \max_{y \in \mathbb{R}^{r}} f(x) + h(x) + y^TDx - \lambda g^*(y)
\end{equation}
The most famous algorithm for solving this saddle-point problem is the primal-dual hybrid gradient (PDHG, also known as the Chambolle-Pock algorithm) \cite{chambolle2011first, chambolle2016ergodic}, which interleaves the update of the primal variable $x$ and the dual variable $y$ throughout the iterates. With this reformulation the linear operator $D$ and the function $g(.)$ are decoupled and hence one can divide-and-conquer the expensive TV-proximal operator with the primal-dual gradient methods. The stochastic variant of the PDHG for the saddle-point problem (\ref{saddle}) has been very recently proposed by Zhao $\&$ Cevher \cite[Alg.1, \say{SPDTCM}]{zhao2018stochastic} and shown to have state-of-the-art performance when compared to PDHG, stochastic ADMM \cite{ouyang2013stochastic} and stochastic proximal averaging \cite{zhong2014accelerated}. We find out that, the SPDTCM method is an efficient and practical optimization algorithm for imaging problems with regularization terms which are coupled with linear operators. Additionally, since the effect of acceleration given by Nesterov's momentum appears to be very important, we also need to consider a way to ensure that our method is accelerated\footnote{We are aware of the recent attempt \cite{zhao2019optimal} to develop accelerated SPDTCM algorithm. However we find this accelerated method is not as practical as SPDTCM and our method in imaging problems since the step-size choices depend on unknown parameters ($\rho$ and $\rho'$ in \cite[Section 2]{zhao2019optimal}) in unconstrained minimization tasks, which need careful tuning manually for different imaging inverse problems in practice. Sub-optimal choices may lead to compromised convergence rates and even diverging behaviors. }. Since the SPDTCM method does not have Nesterov-type acceleration, we propose a variant of it which adopts the outerloop acceleration scheme given by the Katyusha-X algorithm \cite{allen2018katyusha}, which is a simplified variant of the Katyusha algorithm \cite{allen2016katyusha}. We observe that such a momentum step is important for the stochastic primal-dual methods in this application. We present our method as Algorithm 2. One can directly choose the same step-size sequences $\alpha_{(.)}, \eta_{(.)}, \theta_{(.)}$ as suggested in \cite[Section 2.3]{zhao2018stochastic}.

\begin{figure}[t] 
	\centering	
     \includegraphics[width=.77\textwidth]{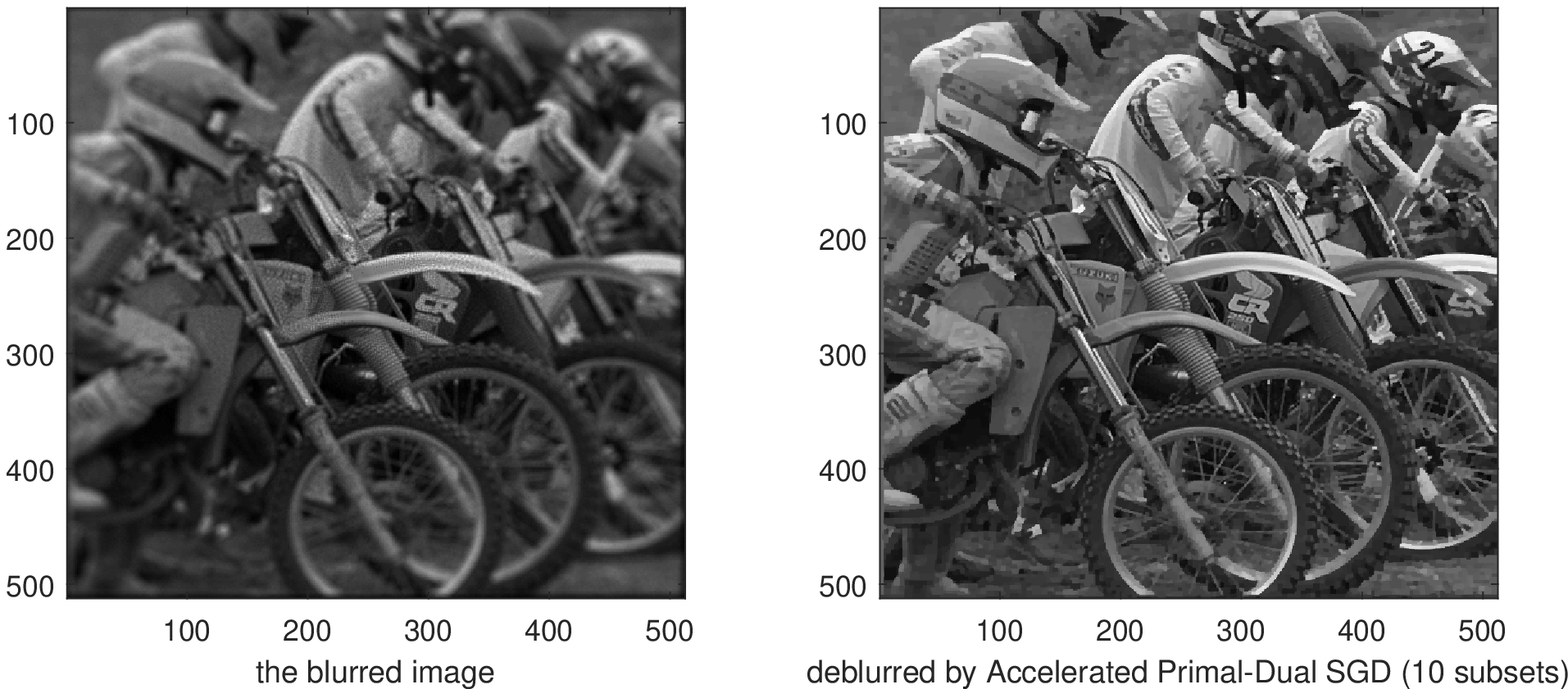}
    \includegraphics[width=.77\textwidth]{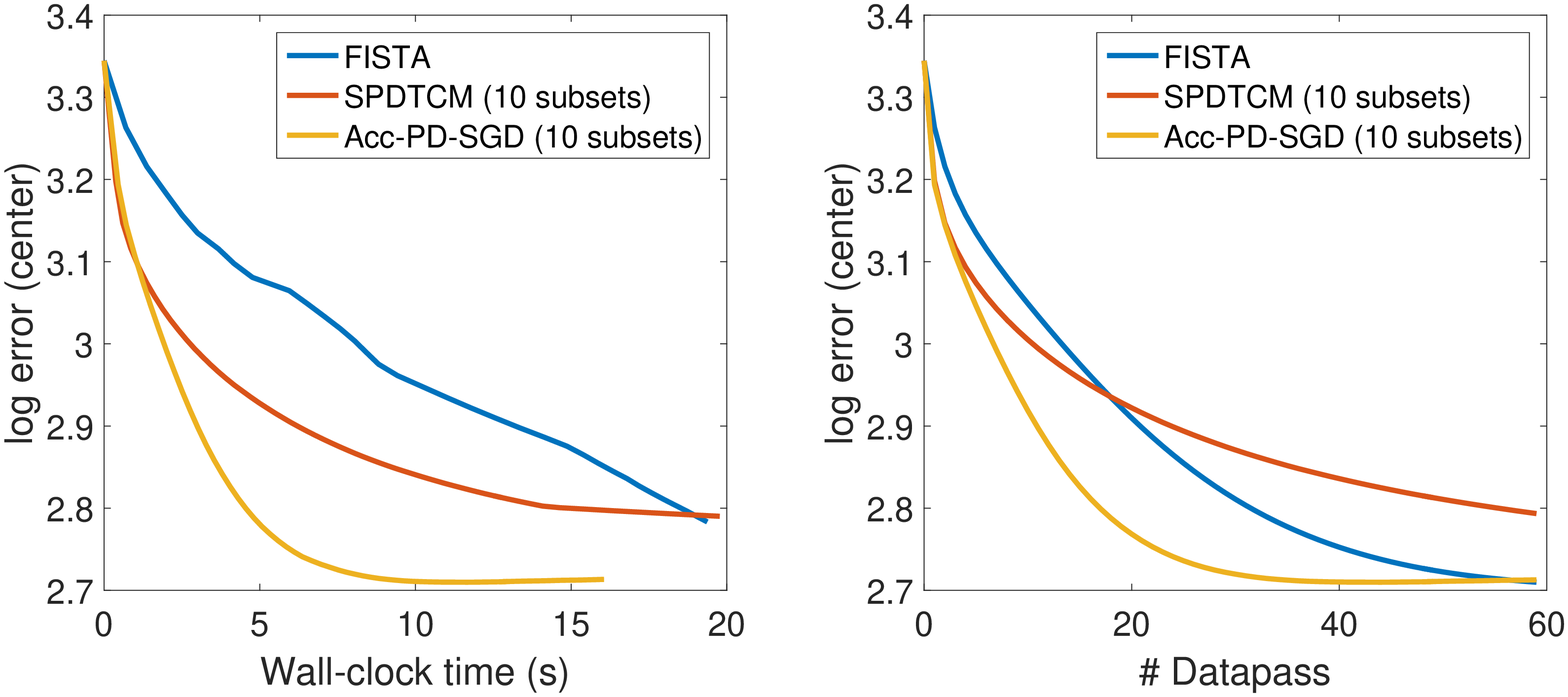}
	\caption{  The estimation error plot for the deblurring experiment with TV-regularization. Image: Kodim05, with an additive Guassian noise (variance 1).}	
	\label{est_error_exp1}
\end{figure}

\section{Numerical experiments}
\subsection{Space-varying image deblurring experiment}
We test our algorithm and compare with FISTA \cite{beck2009fast} and the SPDTCM \cite{zhao2018stochastic} on a space-varying deblurring task for images sized 512 by 512, with a space-varying out-of-focus blur kernel, and TV-regularization. {As suggested by our lower bound $\alpha_\ell(A, \Bar{S}, K)$ in Theorem \ref{Thm1_lower_bound} and the numerical result shown in Figure \ref{EA_2f}, we choose the interleaved subsampling minibatch partition for the deblurring task which has small local-accumulated-coherence for the stochastic algorithms.} All algorithms are initialized with a backprojection. We use a machine with 1.6 GB RAM, 2.60 GHz Intel Core i7-5600U CPU and MATLAB R2015b.

We plot the estimation error $\log_{10}\|x - x^\dagger\|_2^2$ in Figure \ref{est_error_exp1} for each algorithm, where $x^\dagger$ denotes the ground truth image. We observe a roughly $4\times$ improvement in run time compared to FISTA since our algorithm can avoid the heavy cost of the TV proximal operator\footnote{For the computation of the TV proximal-operator for FISTA algorithm, we use the popular implementation from the UnLocBox toolbox \cite{perraudin2014unlocbox}.} while maintaining the fast convergence provided by Nesterov-type momentum and randomization. We also report a significant improvement over the SPDTCM algorithm both in time and iteration complexity. In terms of datapasses (number of epochs), the SPDTCM does not show any advantage over the deterministic method FISTA, while our Acc-PD-SGD with 10 minibatches is able to achieve 2 times acceleration over FISTA. 

We also report that in this experiment, if we further increase the number of subsets of SPDTCM and Acc-PD-SGD, we do not observe faster convergence for these algorithms. In other words, no matter how we increase the number of subsets, this 2-time acceleration (in terms of number of datapasses) is the limit of our algorithm -- such a trend is successfully predicted by the SA factor shown in the Figure \ref{EA_fig}, where we can see that the curve of the SA factor for deblurring task goes flat instead of increasing after the number of minibatches $K > 10$.

\subsection{X-Ray computed tomography image reconstruction experiment}

In the first experiment, we have demonstrated the superior performance of the proposed Acc-PD-SGD algorithm compared to the deterministic algorithm FISTA, and the state-of-the-art stochastic primal-dual gradient method SPDTCM on a space-varying deblurring problem, although it is not inherently favorable for the application of stochastic gradient methods. In this subsection, we turn to another imaging inverse problem -- the computed tomography image reconstruction. As suggested by the curve of the SA factor, the X-ray CT image reconstruction is a nice application for stochastic gradient methods, where we expect them to achieve significant speed-ups over the deterministic methods. 

\begin{figure}[t] 
	\centering	
     \includegraphics[width=.75\textwidth]{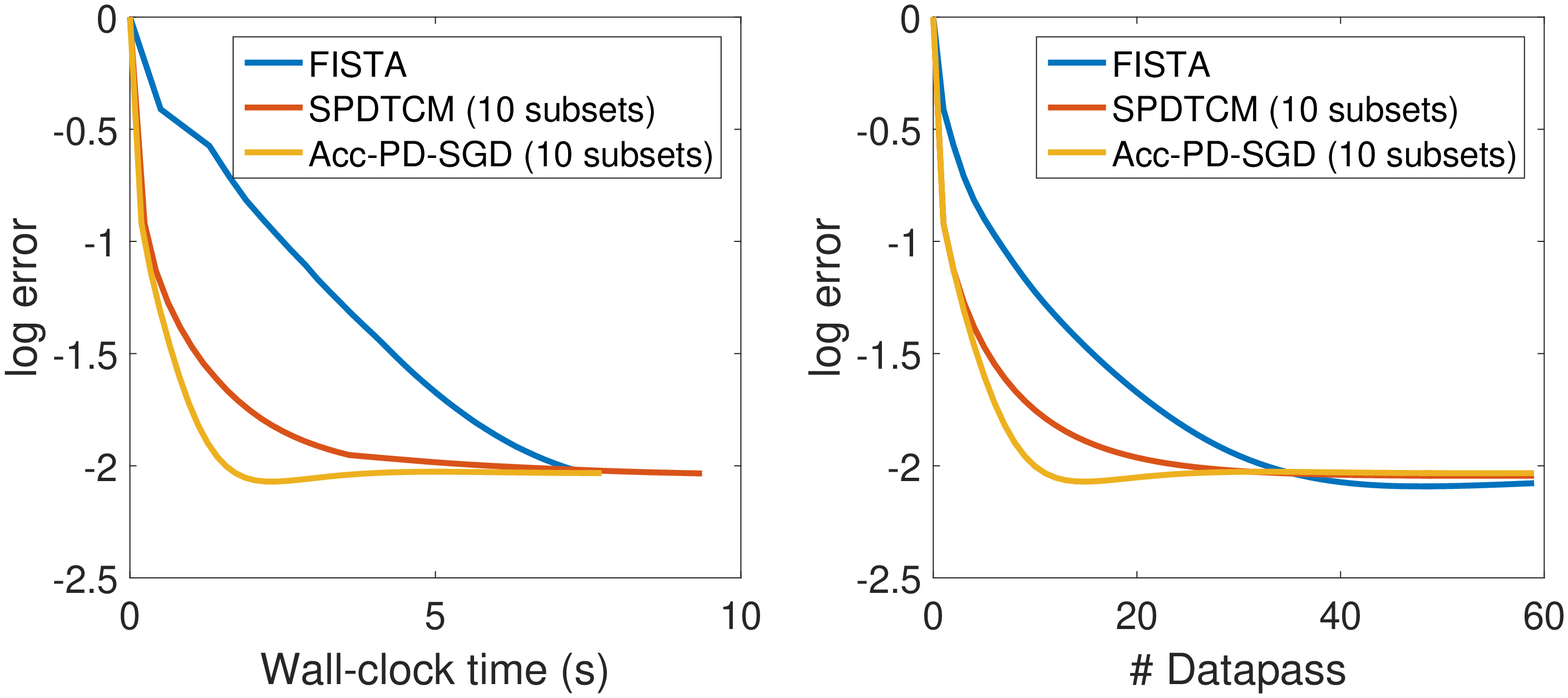}

	\caption{  The estimation error plot for the X-ray CT image reconstruction experiment with TV-regularization. $\log_{10}\frac{\|Ax^\dagger\|_2^2}{\|w\|_2^2} \approx 3.16$.}		
	\label{ct_mod}
\end{figure}

\begin{figure}[h] 
	\centering	

    \includegraphics[width=.75\textwidth]{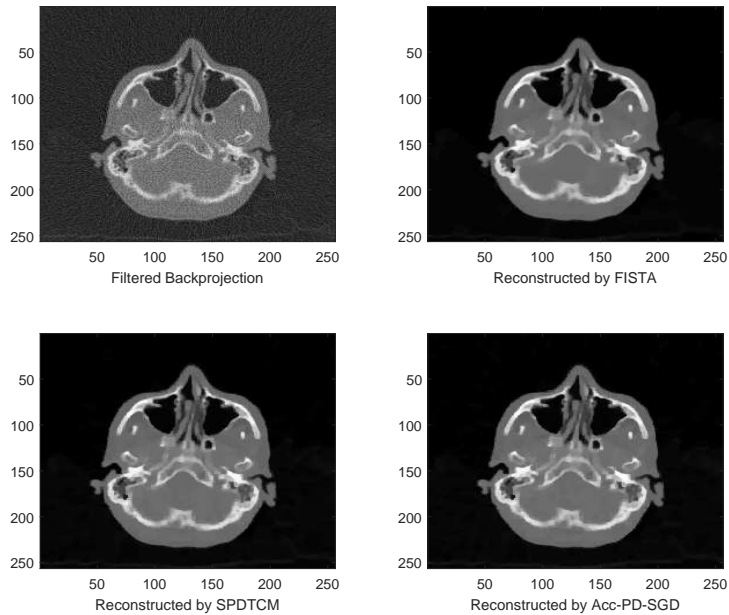}
	\caption{  The reconstructed images by the compared algorithms with TV-regularization.}	
	\label{ct_ima_mod}	
\end{figure}

\begin{figure}[t]
	\centering	
     \includegraphics[width=.75\textwidth]{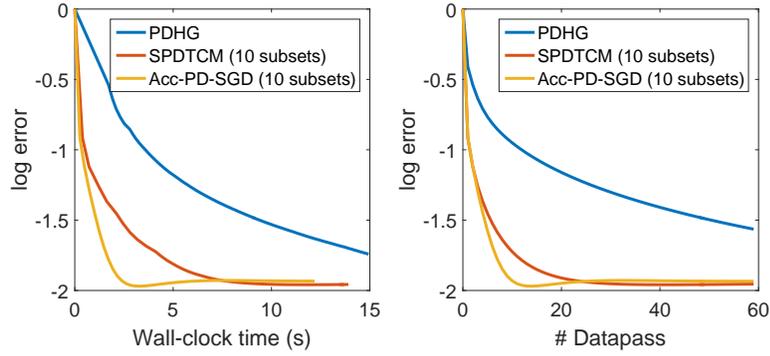}

	\caption{  The estimation error plot for the X-ray CT image reconstruction experiment with TV-regularization and $\ell_1$ regularization on Haar-wavelet basis. $\log_{10}\frac{\|Ax^\dagger\|_2^2}{\|w\|_2^2} \approx 2.86$.}	
	\label{ct_mod2} 	
\end{figure}

\begin{figure}[h] 
	\centering	

    \includegraphics[width=.75\textwidth]{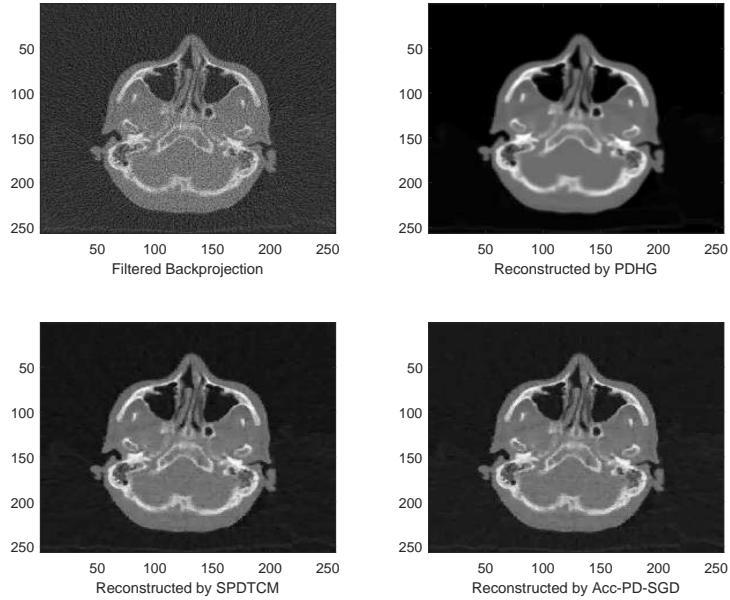}
	\caption{  The reconstructed images by the compared algorithms at termination using joint TV-$\ell_1$ regularization.}		\label{ct_mod2_ima}
\end{figure}

In this experiment we consider a 2D fan-beam CT imaging problem generated via the Matlab package {\it AIRtools} \cite{hansen2012air}, where we aim to reconstruct a $256 \times 256$ head image from $92532$ noisy X-ray measurements (hence the forward operator $A \in \mb{R}^{92532 \times 65536}$), using TV-regularization. Denoting $x^\dagger$ to be the (vectorized) ground truth image and $w \in \mb{R}^n$ to be an additional random noise vector drawn from an exponential Poisson distribution, we have the observed measurement as $b = Ax^\dagger + w$. The signal-to-noise ratio of the X-ray measurement in this example is set to be: $\log_{10}\frac{\|Ax^\dagger\|_2^2}{\|w\|_2^2} \approx 3.16$. We present the convergence results of the compared algorithms in Figure \ref{ct_mod}. In Figure \ref{ct_ima_mod}, we first demonstrate the reconstructed image by the classic filtered-backprojection (FBP) \cite{basu2000n} which is a direct method without considering regularization. From the result of FBP, we can clearly see that the reconstructed image contains a large amount of noise.

From the convergence results of the iterative algorithms in Figure \ref{ct_mod} we can observe that for this experiment, the stochastic methods SPDTCM and Acc-PD-SGD converges significantly faster than the full gradient method FISTA both in terms of number of datapasses and wall-clock time. Meanwhile, we can also see that, our proposed method Acc-PD-SGD converges faster than the SPDTCM which does not use Katyusha-X momentum for acceleration.

Moreover, in some scenarios (such as the cases where we use low-dose X-ray measurements), we may wish to use more than just one regularizer for a better modelling of the ground truth, in order to ensure an accuracy estimation via additional prior information. In the following experiment, we reduce a half of the dosage of the X-ray measurement, such that $\log_{10}\frac{\|Ax^\dagger\|_2^2}{\|w\|_2^2} \approx 2.86$, and use two regularization terms jointly for the reconstruction task -- the TV regularization and $\ell_1$ regularization on the Haar-wavelet basis. The FISTA algorithm cannot be directly applied for this three-composite optimization task, hence we choose the Chambolle-Pock (PDHG) algorithm as a baseline representing the full gradient methods, and SPDTCM as the representative baseline for the state-of-the-art stochastic gradient methods for the three-composite problems. We present the results of this experiment in Figure \ref{ct_mod2} and \ref{ct_mod2_ima}, where we can again clearly observe the superior performance of the proposed method over the baselines on this experiment where we use multiple non-smooth regularization terms.

\section{Conclusion}
In this work we began by investigating the practicability of the state-of-the-art stochastic gradient methods in imaging inverse problems. We first presented a surprisingly negative result on existing SGD-type methods on image deblurring, as a motivational example. To understand the limitation of stochastic gradient methods in inverse problems, we have provided a novel analysis for the estimation-error convergence rate of minibatch SGD in the setting of linear inverse problem with constraints as regularization, under restricted strong-convexity \cite{agarwal2012fast, 2015_Oymak_Sharp} and expected smoothness \cite{gower2018stochastic} conditions. Based on our theoretical analysis, we have proposed the SA factor to evaluate the possible computational advantage of using stochastic techniques for a given task. Then we went further and made an in-depth analysis for understanding how the inherent structure of the forward measurement model can contribute to the practicality of the stochastic gradient methods with partition minibatch schemes. 

We also derived lower and upper bounds of the SA factor. From the theoretical results, we find out that, if an linear inverse problem has a small ratio of $\frac{\|A^T\|_{1 \rightarrow 2}^2}{\|A\|^2}$, which means the Hessian matrix $A^TA$ has fast-decaying eigenspectrum, then it typically admit good SA factors, hence can be rapidly solved by stochastic gradient methods. Our analysis also suggests that, excellent partition schemes typically have low local-accumulated-coherence, which essentially means the measurements within one minibatch are mutually less correlated. Using our SA factor, jointly with the derived lower bounds, the practitioners can easily identify whether they should use stochastic gradient or deterministic gradient algorithms for given inverse problems, and evaluate the potential of given partition schemes.

While our results are mainly for linear inverse problems with least-squares data-fidelity terms and convex regularizers, we believe that they also can be extended and give insights to non-linear inverse problems since one can construct majorizing linearized subproblems (proximal Newton-steps) and solve these subproblems with deterministic or stochastic proximal gradient methods. Our results can also be extended for understanding and analyzing the limitations of stochastic gradient-based methods \cite{sun2019online, sun2019block} with the plug-and-play priors \cite{venkatakrishnan2013plug, kamilov2017plug} and regularization-by-denoising schemes \cite{romano2017little, reehorst2018regularization} in imaging inverse problems, which we leave as a future direction.

Although we have concentrated on stochastic gradient methods vs deterministic gradient methods, there are other considerations that might affect the choice of whether to go stochastic. For example, if an inverse problem can be effectively preconditioned by simple preconditioners (such as diagonal preconditioners), then the potential benefit of stochastic methods over deterministic methods may possibly be reduced, since the preconditioned forward operator may not have as fast-decaying spectrum as the original one. Moreover, if the forward operator can be implemented with a fast transform such as the FFT, for example in MRI image reconstruction tasks, the deterministic gradient methods are usually much more favored since they can benefit from the fast operation while current stochastic gradient methods cannot.

Finally, as a side contribution, we propose the Accelerated Primal-Dual SGD to cope with multiple regularizers (potentially) with a linear operator while maintaining the fast convergence, and demonstrate its effectiveness via experiments on space-varying deblurring and X-Ray CT image reconstruction. Although we have not yet done the theoretical convergence analysis of the Acc-PD-SGD algorithm, we believe it provides insights for the algorithmic design of fast stochastic gradient methods tailored specifically for imaging inverse problems, from understanding the inherent limitation, to the practical algorithmic framework.

\appendix

\section{Lower bounds for deterministic gradient and stochastic gradient optimization}\label{Ad1}
We present some well-known lower bounds for first-order optimization. We start by the lower-bound derived by Nesterov \cite[Theorem 2.16]{nesterov2013introductory}:
\begin{theorem}\label{lower_bound_full_gradient}
(Lower-bound for convex and smooth optimization \cite{nesterov2013introductory}) For any $1 \leq k \leq \frac{1}{2}( d - 1 )$ and $x^0 \in \mb{R}^d$ there exist a function $F \in \mc{F}_L^{1, 1}(\mb{R}^d)$ such that for any iterative algorithm which uses only first-order oracle $\triangledown F(.)$, the following inequality holds:
\begin{equation}
    F(x^k) - F^\star \geq \Omega \left(\frac{L_f\|x^0 - x^\star\|_2^2}{ (k + 1)^2} \right).
\end{equation}
\end{theorem}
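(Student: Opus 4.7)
The plan is to invoke Nesterov's classical worst-function-in-the-world construction. The idea is to exhibit a single quadratic $F$ on $\mathbb{R}^d$ (with $d \geq 2k+1$) whose sub-level structure forces any first-order method to make at most one coordinate of progress per gradient call, and then compute the residual objective gap directly.

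First I would define the tridiagonal ``Laplacian'' operator $A_{2k+1} \in \mathbb{R}^{d\times d}$ with $2$ on the diagonal, $-1$ on the two adjacent off-diagonals, truncated to its first $2k+1$ rows/columns (and zero afterwards). Setting
\begin{equation}
F(x) = \tfrac{L}{8}\bigl( x^T A_{2k+1} x - 2 e_1^T x \bigr),
\end{equation}
one checks that $\|A_{2k+1}\| \leq 4$, so $\nabla^2 F \preceq L \cdot I$ and $F \in \mathcal{F}_L^{1,1}(\mathbb{R}^d)$. Convexity is immediate because $A_{2k+1}$ is positive semidefinite (it is a Gram matrix of differences).

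Next I would establish the crucial invariance: if the algorithm is any ``black-box'' first-order method in the sense of \cite{nesterov2013introductory} — i.e., its $j$-th iterate lies in $x^0 + \operatorname{span}\{\nabla F(x^0), \ldots, \nabla F(x^{j-1})\}$ — and we initialize at $x^0 = 0$, then $x^j \in \operatorname{span}(e_1, \ldots, e_j)$ for all $j \leq k$. This follows by induction, using the tridiagonal sparsity pattern: multiplying a vector supported on the first $j$ coordinates by $A_{2k+1}$ produces a vector supported on at most the first $j+1$ coordinates, and subtracting $e_1$ preserves this. Consequently the last $2k+1-k = k+1$ of the ``active'' coordinates of $x^k$ are zero.

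Then I would compute the minimizer explicitly: $x^\star$ is the solution of $A_{2k+1} x = e_1$, which has closed-form entries $x^\star_i = 1 - \tfrac{i}{2k+2}$ for $i \leq 2k+1$ and $0$ otherwise. This yields explicit expressions for $F^\star$ and $\|x^\star\|_2^2$ as sums that can be evaluated in closed form (both scale like $\Theta(k)$ up to constants, so their ratio $F^\star/\|x^\star\|_2^2$ scales like $\Theta(1/k^2)$). Finally, for any $x^k$ supported on the first $k$ coordinates, one bounds $F(x^k)$ from below by restricting $F$ to those coordinates and minimizing there; the minimum of this restricted quadratic exceeds $F^\star$ by the energy stored in the truncated coordinates, and a direct calculation shows
\begin{equation}
F(x^k) - F^\star \;\geq\; \tfrac{L}{8}\cdot \frac{\|x^0 - x^\star\|_2^2}{(k+1)^2} \cdot c,
\end{equation}
for an absolute constant $c>0$, which is the desired $\Omega\!\bigl(L_f \|x^0-x^\star\|_2^2/(k+1)^2\bigr)$ lower bound.

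The main obstacle — and the only step that is more than bookkeeping — is the coordinate-restriction argument: one has to be careful that ``first-order algorithm'' is formalized so that the Krylov invariance really holds, and that the worst-case is preserved under the standard reduction $x^0 \neq 0$ by translation. The algebraic evaluation of the closed-form sums for $F^\star$ and $\|x^\star\|_2^2$ is routine and I would defer it to the book's bookkeeping, citing \cite[Sec.~2.1.2]{nesterov2013introductory} for the exact constants.
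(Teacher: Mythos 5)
Your proposal is correct and is precisely the canonical argument behind this result: the paper itself supplies no proof, quoting the theorem directly from \cite[Theorem 2.1.7 / 2.16]{nesterov2013introductory}, and your sketch (the tridiagonal worst-case quadratic on $2k+1$ active coordinates, the Krylov-span invariance forcing $x^k \in \operatorname{span}(e_1,\dots,e_k)$, the closed-form minimizer $x^\star_i = 1 - \tfrac{i}{2k+2}$, and the resulting ratio of order $L/(k+1)^2$) reconstructs exactly the proof in that reference, including the correct caveats about formalizing the first-order black-box model and translating to general $x^0$. No gaps.
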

Such a lower-bound suggests that there exists at least one $L$-smooth convex function on which any first-order method cannot converge faster than $O(1/k^2)$ for a limited number of iterations which $1 \leq k \leq \frac{1}{2}( d - 1 )$.

For the stochastic gradient-based optimization, several researchers \cite{woodworth2016tight, lan2015optimal} have derived important lower-bounds for optimizing the finite-sum objective with stochastic gradient oracle $\triangledown f_i(.)$, and we present here a typical well-known result:
 \begin{theorem}\label{lower_bound_SGD}
 (Lower bound for convex and smooth finite-sum optimization \cite[Theorem 7]{woodworth2016tight}.) For any randomized algorithms with access to the stochastic gradient oracle $\triangledown f_i(.)$, and any $L$, $R$, $\epsilon \geq 0$, there exist a sufficiently large dimension $d = O(\frac{L^2R^6}{\epsilon^2} \log\frac{LR^2}{\epsilon} + R^2 n \log n)$, and $n$ functions $f_i \in \mc{F}_L^{1,1}(\mc{X})$ where $\mc{X} \in \left\{x \in \mb{R}^d | \|x\|_2 \leq R \right\}$, such that in order to achieve an output $\Hat{x}$ which satisfies $\mb{E}[F(\Hat{x}) - F(x^\star)] \leq \epsilon$ for the minimization task:
 \begin{equation}
     x^\star \in \arg \min_{x \in \mc{X}} \bigg\{F(x) := \frac{1}{n}\sum_{i = 1}^n f_i(x) \bigg\},
 \end{equation}
 a necessary 
 \begin{equation}\label{lower_bound_stochastic_gradient}
     \Omega \left(n + R\sqrt{\frac{nL}{\epsilon}}\right)
 \end{equation}
 number of stochastic gradient evaluations are needed.
 \end{theorem}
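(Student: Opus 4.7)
The plan is to prove the two terms $\Omega(n)$ and $\Omega(R\sqrt{nL/\epsilon})$ via two distinct hard-instance constructions and then combine them by taking the maximum of the two bounds.

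First, I would establish the $\Omega(n)$ term via a ``needle in a haystack'' construction. Fix a uniformly random index $i^\star \in [n]$, set $f_{i^\star}$ to be some non-trivial convex and $L$-smooth function on $\mc{X}$ whose minimizer is separated from $0$ by more than $\epsilon$ in objective value (after averaging), and set $f_i \equiv 0$ for $i \neq i^\star$. Then $F = \frac{1}{n} f_{i^\star}$, and until the algorithm has actually queried the special index $i^\star$, every stochastic gradient it observes is identically zero and carries no information about the minimizer. By Yao's minimax principle together with a standard hypothesis-testing / coupon-collector argument, any randomized algorithm achieving $\E[F(\hat x) - F^\star] \leq \epsilon$ on this distribution over instances must query a constant fraction of indices, costing $\Omega(n)$ stochastic-gradient evaluations.

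Second, for the $\Omega(R\sqrt{nL/\epsilon})$ term, I would adapt Nesterov's classical chain-quadratic $\psi(x) = \frac{L}{8}(x^\top B x - 2 e_1^\top x)$, where $B$ is the tridiagonal matrix with $2$ on the diagonal and $-1$ on the off-diagonals, truncated at an effective dimension $T = \Theta(R\sqrt{L/\epsilon})$. This $\psi$ is $L$-smooth and known to require $\Omega(T)$ gradient evaluations of any zero-respecting first-order method to reach $\epsilon$-suboptimality. I would distribute this chain structure across the $n$ components by partitioning the coordinates into consecutive blocks, letting each $f_i$ encode a scaled chain-quadratic on its own block together with a coupling term linking its boundary coordinates to the neighbouring block. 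With a random relabelling of which component corresponds to which block (so the algorithm cannot a priori tell where progress is presently possible), advancing one chain step requires sampling the ``correct'' component; a coupon-collector style argument shows this costs $\Omega(\sqrt{n})$ samples per advancement on average, for a total of $\Omega(\sqrt{n}\cdot T) = \Omega(R\sqrt{nL/\epsilon})$ stochastic gradient evaluations. The inflated dimension $d = \tilde O(L^2 R^6/\epsilon^2 + nR^2)$ in the statement is exactly what is needed to host a chain of length $T$ across $n$ blocks while leaving room for the random labelling.

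The main obstacle is the second construction. Ensuring that \emph{no} randomized algorithm can shortcut the chain --- in particular, ruling out clever strategies that effectively parallelise progress across different blocks --- requires the block-to-component assignment to be drawn from a sufficiently rich random ensemble (typically combined with a random orthogonal rotation of the ambient coordinate system), and then converting the resulting distributional lower bound to a worst-case one via Yao's principle. The zero-respecting restriction, which is immediate in the deterministic Nesterov argument, has to be recovered in the randomized oracle setting through the invariance of the hard instance under the randomising transformation; this is the technically delicate step of the whole plan.
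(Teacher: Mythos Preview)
The paper does not prove this theorem. It is presented in Appendix~\ref{Ad1} purely as a quotation of \cite[Theorem 7]{woodworth2016tight}, alongside Nesterov's deterministic lower bound, and is then invoked as a black box in the proof of Theorem~\ref{thm_6.3.1}. There is therefore no ``paper's proof'' to compare your proposal against.

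As a reconstruction of the Woodworth--Srebro argument, your outline is in the right spirit: the $\Omega(n)$ term does come from a needle-in-a-haystack instance, and the $\Omega(R\sqrt{nL/\epsilon})$ term does come from distributing a Nesterov chain across the $n$ components under a random labelling, with Yao's principle and a random rotation to lift the zero-respecting argument to arbitrary randomized algorithms. One point to be careful with is your accounting ``$\Omega(\sqrt{n})$ samples per advancement times $T=\Theta(R\sqrt{L/\epsilon})$ advancements'': a bare coupon-collector argument does not naturally produce a $\sqrt{n}$ factor, and in the original construction the $\sqrt{n}$ arises instead from the \emph{length} of the chain being $\Theta(\sqrt{nL/\epsilon})$ rather than $\Theta(\sqrt{L/\epsilon})$, with each link held by a random component. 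Your identification of the zero-respecting reduction as the delicate step is accurate, but if you intend to actually carry this out you should revisit how the $\sqrt{n}$ enters.
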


\section{The Proof of Theorem \ref{thm_6.3.1}}\label{Ad2}

The proof of this theorem is straight forward and is based on combining the existing results since we have assumed the dimension $d$ is large enough for the lower-bound for stochastic gradient oracles to hold on a domain:
\begin{equation}
    \X = \left\{ x \in \mathbb{R}^d : \|x\|_2^2 \leq 1 \right\}.
\end{equation}
According to the lower bound for the stochastic gradient we have presented in Theorem \ref{lower_bound_SGD} \cite[Theorem 7]{woodworth2016tight}, there exists a positive constant $C_{\mathrm{stoc}}$, which is independent of $L_b$, $l_f$ and $K$, such that in order to achieve an output $ \E f(x_\A^s) - f^\star \leq \epsilon$, any stochastic gradient algorithm must take at least:
\begin{equation}
     C_{\mathrm{stoc}}\left(K + \sqrt{\frac{KL_b}{\epsilon}}\right)
\end{equation}
calls of the stochastic gradient oracle $\triangledown f_i()$. In other words, for this worst case function, if we run any stochastic gradient method with only $Ks$ calls on the stochastic gradient oracle such that:
\begin{equation}
    Ks =  C_{\mathrm{stoc}}\sqrt{\frac{KL_b}{\epsilon}},
\end{equation}
$\E f(x^s_{\A_\mr{stoc}}) - f^\star \geq \epsilon$ can be guaranteed. Hence, we have:
\begin{equation}
    \E f(x^s_{\A_\mr{stoc}}) - f^\star \geq \frac{C_{\mathrm{stoc}}^2 L_b}{Ks^2}
\end{equation}

Meanwhile, starting from $x^0 \in \mathcal{X}$, by Def. \ref{class_of_optimal_full}, for any optimal full gradient method $\A_\mr{full}$ we can have:
\begin{equation}
  f(x^s_{\A_\mr{full}}) - f^\star \leq \frac{C_1 L_f\|x^0 -x^\star\|_2^2}{s^2} \leq \frac{4C_1 L_f}{s^2},
\end{equation}
where the constant $C_1$ is independent of $L_b$, $L_f$ and $K$. Combining these two bounds we can have:
\begin{equation}
     \frac{\E f(x^s_{\A_\mr{stoc}}) - f^\star}{f(x^s_{\A_\mr{full}}) - f^\star} \geq \frac{C_{\mathrm{stoc}}^2 L_b}{4C_1 KL_f}.
\end{equation}
Finally, by setting $c_0 = \frac{C_{\mathrm{stoc}}^2}{4C_1}$ we yield the claim.

\section{The proof of Theorem \ref{T1}: convergence of minibatch SGD on constrained Least-squares}\label{Ad3}

We first present the following lemma:
\begin{lemma}\label{expect_sm}
For $\|w\|_2 = 0$, we have:
\begin{equation}
    \mathbb{E}_S(\|\frac{1}{m}A^TS^TSA(x - x^\dagger)\|_2^2) \leq \frac{L_e}{n} \|A(x - x^\dagger)\|_2^2,
\end{equation}
\end{lemma}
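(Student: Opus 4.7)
The plan is to apply the expected smoothness definition (the inequality labelled \eqref{es_def} above) at the specific pair $(x, y) = (x, x^\dagger)$ and then simplify both sides using the noiseless assumption $\|w\|_2 = 0$, which forces $b = Ax^\dagger$. Under this assumption the full gradient at $x^\dagger$ vanishes, since $\triangledown f(x^\dagger) = \frac{1}{n}A^T(Ax^\dagger - b) = 0$, and likewise $f(x^\dagger) = \frac{1}{2n}\|Ax^\dagger - b\|_2^2 = 0$. So the right-hand side of \eqref{es_def} collapses to $2L_e f(x) = \frac{L_e}{n}\|Ax - b\|_2^2 = \frac{L_e}{n}\|A(x - x^\dagger)\|_2^2$.

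For the left-hand side, I would first rewrite the minibatch gradient using the identification $\frac{K}{n} = \frac{1}{m}$ (which is valid for a data-partition scheme, and which is also the normalization used in the algorithm in Section~\ref{sec:main}): for any realization $S$ of the random subsampling operator drawn from $\mathcal{D}$, we have $\triangledown f_{S_o}(x) = \frac{1}{m}(SA)^T(SAx - Sb)$. Using again $b = Ax^\dagger$, this equals $\frac{1}{m}A^T S^T S A(x - x^\dagger)$, and in particular $\triangledown f_{S_o}(x^\dagger) = 0$. Hence $\|\triangledown f_{S_o}(x) - \triangledown f_{S_o}(x^\dagger)\|_2^2 = \|\frac{1}{m}A^TS^TSA(x - x^\dagger)\|_2^2$, and taking expectations over $S \sim \mathcal{D}$ matches the left-hand side of the claim.

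Combining the two simplifications, the expected smoothness inequality \eqref{es_def} evaluated at $(x, x^\dagger)$ becomes exactly the bound asserted by the lemma. There is essentially no obstacle here: the only thing to be careful about is verifying that the constants $K/n$ and $1/m$ agree so that the form of $\triangledown f_{S_o}$ used in the lemma statement coincides with the one to which expected smoothness is assumed to apply; both reduce to the same linear-inverse-problem gradient under the partition minibatch convention of Section~\ref{notation}. Once this book-keeping is done, the proof is a one-line specialization of \eqref{es_def}.
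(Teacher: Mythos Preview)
Your proposal is correct and follows essentially the same approach as the paper: specialize the expected smoothness inequality \eqref{es_def} at $y = x^\dagger$, then use the noiseless assumption $b = Ax^\dagger$ to kill $\triangledown f(x^\dagger)$, $f(x^\dagger)$, and $\triangledown f_{S_o}(x^\dagger)$, yielding the stated bound. The paper's proof is identical in structure, with only cosmetic differences in how the simplifications are written out.
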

\begin{proof}
Due to the definition of expected smoothness (\ref{es_def}), we have:
\begin{equation}
    \mathbb{E}_S(\|\frac{1}{m}A^TS^TSA(x - y)\|_2^2) \leq 2 L_e (\frac{1}{2n}\|Ax - b\|_2^2 - \frac{1}{2n}\|Ay- b\|_2^2 - \langle \triangledown f(y), x-y \rangle).
\end{equation}
Now set $y = x^\dagger$, and since $\langle \triangledown f(x^\dagger), x-x^\dagger \rangle = 0$ at the noiseless case, we have:
\begin{equation}
    \mathbb{E}_S(\|\frac{1}{m}A^TS^TSA(x - x^\dagger)\|_2^2) \leq 2 L_e (\frac{1}{2n}\|Ax - b\|_2^2 - \frac{1}{2n}\|Ax^\dagger - b\|_2^2).
\end{equation}
Note that by definition $b = Ax^\dagger + w$, and since it is assumed here $\|w\|_2 = 0$, we have:
\begin{equation}
    \mathbb{E}_S(\|\frac{1}{m}A^TS^TSA(x - x^\dagger)\|_2^2) \leq \frac{L_e}{n} \|A(x - x^\dagger)\|_2^2 .
\end{equation}
Thus finishes the proof of this Lemma.
\end{proof}
Now we present the complete proof of Theorem \ref{T1}:
\begin{proof}
For iteration index $i$ of minibatch SGD we have the following:
\begin{eqnarray*}
\|x^{i+1}-x^\dagger\|_2
&\leq& \|\mathcal{P_{\mathcal{K}}}(x^i-\eta\cdot\frac{1}{m} A^T{S^i}^T{S^i}(Ax^i-b))-x^\dagger\|_2\\
(a)&=& \|\mathcal{P}_{\mathcal{K}-x^\dagger}(x^i-x^\dagger-\eta\cdot\frac{1}{m} A^T{S^i}^T{S^i}(Ax^i-b))\|_2\\
&=& \|\mathcal{P}_{\mathcal{K}-x^\dagger}(x^i-x^\dagger - \eta\cdot\frac{1}{m} A^T{S^i}^T{S^i}(Ax^i-Ax^\dagger))\|_2\\
(b)&\leq& \|\mathcal{P}_{\mathcal{C}}(x^i-x^\dagger - \eta\cdot\frac{1}{m} A^T{S^i}^T{S^i}(Ax^i-Ax^\dagger))\|_2\\
(c)&=& \sup_{v \in \mathcal{C} \cap \mathcal{B}^{n}} v^T(x^i-x^\dagger - \eta\cdot\frac{1}{m} A^T{S^i}^T{S^i}(Ax^i-Ax^\dagger))\\
&\leq& \|x^i-x^\dagger - \eta A^TS^TSA(x^i-x^\dagger)\|_2+(\sup_{v \in \mathcal{C} \cap \mathcal{B}^{n}} v^TA^TS^TS\frac{w}{\|w\|_2})\|w\|_2\\
(d)&\leq& \|(I - \eta \cdot \frac{1}{m}A^T{S^i}^T{S^i}A)(x^i-x^\dagger)\|_2
\end{eqnarray*}
Line (a) holds because of the distance preservation of translation \cite[Lemma 6.3]{2015_Oymak_Sharp}; line (b) holds because of the length of the projection onto a convex set which includes $\bold{0}$ is smaller than the length of projection onto a cone containing the set \cite[Lemma 6.4]{2015_Oymak_Sharp}; line (c) holds because of the definition of the cone-projection operator \cite[Lemma 6.2]{2015_Oymak_Sharp}. Line (d) holds because of the non-expansiveness of the cone-projection operator. Now we take the expectation of $\|x^{i+1}-x^\dagger\|_2$ over the randomness of minibatch sampling at iteration $i$, and consequently:
\begin{eqnarray*}
&&\E(\|x^{i+1}-x^\dagger\|_2)\\
&\leq& \E(\|(I - \eta \cdot \frac{1}{m}A^T{S^i}^T{S^i}A)(x^i-x^\dagger)\|_2)\\
(e)&\leq& \sqrt{ \E(\|(I - \eta \cdot \frac{1}{m}A^T{S^i}^T{S^i}A)(x^i-x^\dagger)\|_2^2)}\\
&=& \sqrt{ \E( \|x^i-x^\dagger\|_2^2-2\eta\cdot\frac{1}{m}\|{S^i}A(x^i-x^\dagger)\|_2^2+\eta^2\| \frac{1}{m}A^T{S^i}^T{S^i}A(x^i-x^\dagger)\|_2^2)}\\
(f)&\leq& \sqrt{  \|x^i-x^\dagger\|_2^2-2\eta\cdot\frac{1}{n}\|A(x^i-x^\dagger)\|_2^2+\eta^2 (\frac{L_e}{n} \|A(x^i - x^\dagger)\|_2^2 )}\\
&\leq& \sqrt{  \|x^i-x^\dagger\|_2^2-(2\eta-L_e\eta^2)\cdot \frac{1}{n} \|A(x^i - x^\dagger)\|_2^2}\\
(g)&\leq& \sqrt{  \|x^i-x^\dagger\|_2^2-(2\eta\mu_c - L_e \mu_c\eta^2 ) \|x^i - x^\dagger\|_2^2}\\
&=&\sqrt{1- 2\mu_c\eta+ L_e \mu_c\eta^2}\|x^i-x^\dagger\|_2
\end{eqnarray*}

Line (e) uses the Jensen's inequality, line (f) is due to Lemma \ref{expect_sm}, while inequality (g) holds because of the restricted strong-convexity condition (Def. \ref{D5}), and we choose $\eta \leq \frac{2}{L_e}$.  Then because the subsampling in each iteration is independent from the previous error vector, by the tower rule we yield:
 \begin{eqnarray*}
     \E(\|x^i-x^\dagger\|_2) &\leq& (1-2\mu_c\eta+ L_e \mu_c\eta^2)^{\frac{i}{2}}\|x^0-x^\dagger\|_2.
 \end{eqnarray*}
Thus we finish the proof by choosing $\eta = \frac{1}{L_e}$.
\end{proof}

\section{Estimating the stochastic acceleration for random with-replacement sampling schemes}\label{Ad4}

The notion of stochastic acceleration factor can also be extended to the case where we use random with-replacement sampling scheme. For random with-replacement minibatch scheme, \cite[Proposition 3.8]{pmlr-v97-qian19b} shows that the expected smoothness constant $L_e$ in (\ref{es_def}) can be upper bounded by:
\begin{equation}
    L_e \leq \frac{m - 1}{m(n - 1)}L_f + \frac{n-m}{m(n-1)}\max_{i \in [n]} \|a_i\|_2^2
\end{equation}

 Recall our Remark \ref{remark_1} comparing deterministic gradient descent and minibatch SGD on constrained least-squares. Denote here $L_f = \frac{1}{n}\|A^TA\|_2$. If $\|w\| = 0$ and, to guarantee an estimation accuracy $\|x_K - x^\dagger \|_2 \leq \varepsilon$, the deterministic proximal gradient descent needs:
 \begin{equation}
     N_{\mr{full}} = \frac{L_f}{\mu_c} \log\frac{\|x_0 - x^\dagger\|_2}{\varepsilon} 
 \end{equation}
iterations, while the minibatch SGD needs:
 \begin{equation}
     N_{\mr{stoc}} = \frac{2L_e}{\mu_c} \log\frac{\|x_0 - x^\dagger\|_2}{\varepsilon},
 \end{equation}
to achieve $\E\|x_K - x^\dagger \|_2 \leq \varepsilon$. Hence the iteration complexity of minibatch SGD is $\Upsilon_e(A, m)$-times smaller than the proximal gradient descent, where:
\begin{equation}
    \Upsilon_e(A, m) = \frac{\frac{n}{m} N_{\mr{full}}}{N_{\mr{stoc}}} = \frac{\frac{n}{2m}L_f}{L_e} \geq \frac{1}{\frac{m - 1}{2(n - 1)} + \frac{n - m}{2(n-1)}\frac{\|A^T\|_{1 \rightarrow 2}^2}{\|A\|^2}}.
\end{equation}
We name $\Upsilon_e(A, m)$ as the expected SA factor for the random with-replacement sampling scheme.

\begin{figure}[t] 
	\centering	
    \includegraphics[width=.57\textwidth]{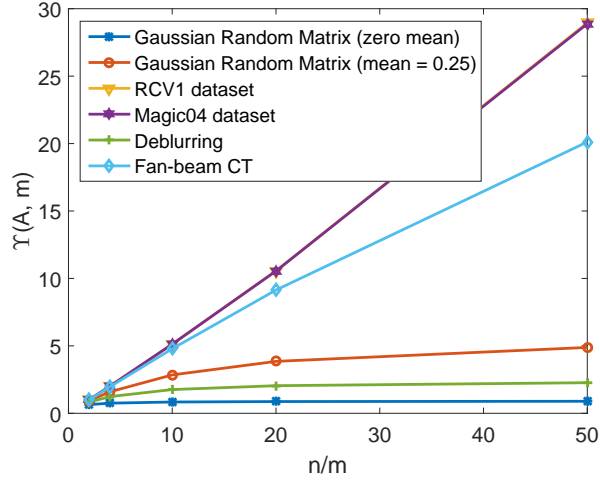}
	\caption{  Expected SA factor of inverse problems with different forward operators for uniform random sampling with replacement.}
	\label{EA_fig_rand_wp}
\end{figure}

\begin{remark}
From the expression of $\Upsilon_e(A, m)$ we can find that the key factor which influences the advantage of stochastic gradient over determinstic gradient is again the ratio $\frac{\|A^T\|_{1 \rightarrow 2}^2}{\|A\|^2}$ which also occurs in our lower bounds for data-partition minibatch schemes in Theorem \ref{Thm1_lower_bound} and \ref{Thm_rand par}.

Our results in this section regarding random with-replacement sampling have several restrictions. So far we can establish results only for minibatch SGD algorithm and meanwhile $\Upsilon_e(A, m)$ is derived based on comparing this result with the iteration complexity of proximal gradient descent. If we use $\Upsilon_e(A, m)$ to measure whether an inverse problem is suitable for stochastic gradient algorithms using a random with-replacement minibatch scheme, we are implicitly making the conjecture that all these minibatch proximal stochastic gradient methods including variance-reduced methods such as SVRG, SAGA and Katyusha, admit the large step-size choices $O(1/L_e)$ based on the expected smoothness constant $L_e$, which has not been shown yet in the literature.

In Figure \ref{EA_fig_rand_wp}, we plot the expected SA factor $\Upsilon_e(A, m)$ for a range of inverse problems which we have considered so far. Similar to the results shown in Figure \ref{EA_fig}, we observe that, the space-varying deblurring example and the zero-mean random Guassian example have the worst expected SA factors. 
\end{remark}

\section{The proof of Theorem \ref{Thm1_lower_bound}}\label{Ad5}

Now we present the proof of Theorem \ref{Thm1_lower_bound}.
\begin{proof}
If we set $H = S^kA (S^k A)^T$ for some $k \in [K]$, the top eigenvalue of $S^kA (S^k A)^T$ is no larger than the largest value within the set $G(S^kA (S^k A)^T)$ which we denote here as $G_{\max}(S^kA (S^k A)^T)$. We have the following relationship:
\begin{equation}\label{GDT_co}
    \|S^kA\|^2 \leq  G_{\max} (S^kA (S^k A)^T) = \max_{i \in S_k} \|(S^kA) a_i\|_1 = \max_{i \in S_k} \sum_{j \in S_k}|\langle a_i, a_j \rangle|.
\end{equation}
Then we have:
\begin{equation}
    L_b = \frac{K}{n} \max_{k \in [K]}\|S^kA (S^k A)^T\| \leq \frac{K}{n} \max_{q \in [K]}\max_{i \in S_q} \sum_{j \in S_q}|\langle a_i, a_j \rangle| = \frac{K}{n}\mu_\ell(A, \Bar{S}, K).
\end{equation}
By definition of the SA factor, we can write:
\begin{equation}
    \Upsilon(A, \Bar{S}, K) = \frac{KL_f}{L_b} \geq \frac{\|A\|^2}{\mu_\ell(A, \Bar{S}, K)}.
\end{equation}
On the other hand, note that by the definition of the local accumulated coherence, we can have an upper bound for $\mu_\ell(A, \Bar{S}, K)$:
\begin{equation}
    \mu_\ell(A, \Bar{S}, K) := \max_{q \in [K]}\max_{i \in S_q} \sum_{j \in S_q}|\langle a_i, a_j \rangle| \leq \frac{n}{K} \max_{i \in [n]} \|a_i\|_2^2 = \frac{n}{K} \|A^T\|_{1 \rightarrow 2}^2,
\end{equation}
and hence we can have a relaxed lower bound for $\Upsilon(A, \Bar{S}, K)$:
\begin{equation}
    \Upsilon(A, \Bar{S}, K) \geq \frac{\|A\|^2}{\mu_\ell(A, \Bar{S}, K)} \geq \frac{K\|A\|^2}{n \|A^T\|_{1 \rightarrow 2}^2}.
\end{equation}
 Suppose that for some positive constant $\rho$ we have:
\begin{equation}
    \frac{\max_{i \in [n]} \|a_i\|_2^2}{\frac{1}{n}\sum_{j = 1}^n\|a_j\|_2^2} \leq \rho,
\end{equation}
then we can write:
\begin{equation}
   \max_{i \in [n]} \|a_i\|_2^2 \leq \rho \cdot \frac{1}{n}\sum_{j = 1}^n\|a_j\|_2^2 = \rho \cdot \frac{\|A\|_F^2}{n} = \rho \cdot \frac{\sum_{i = 1}^d \sigma(A^TA, i)}{n},
\end{equation}
and hence we can further lower bound $\Upsilon(A, \Bar{S}, K)$ by the cumulative eigenspectrum of the Hessian:
\begin{equation}
    \Upsilon(A, \Bar{S}, K) \geq \frac{\|A\|^2}{\mu_\ell(A, \Bar{S}, K)} \geq \frac{K\|A\|^2}{n \|A^T\|_{1 \rightarrow 2}^2} \geq \frac{K\cdot\sigma(A^TA, 1)}{\rho \cdot \sum_{i = 1}^d \sigma(A^TA, i)}.
\end{equation}
thus finishes the proof.
\end{proof}

\section{The proof of Theorem \ref{Thm_upper_bound}}\label{Ad6}

We present the proof of Theorem \ref{Thm_upper_bound} here.

\begin{proof}
\cite[Theorem 4.3.15]{horn2012matrix} indicates that, for a given Hermitian matrix $H \in \mb{R}^{n \times n}$, and any of its $m$-by-$m$ principal submatrices $H_m$, obtained by deleting $n - m$ rows and columns from $H$, we can have:
\begin{equation}
    \sigma(H_m, 1) \geq \sigma(H, n - m + 1).
\end{equation}
If we set $H_m = S^k A(S^k A)^T$, then we have:
\begin{equation}
    \|S^k A(S^k A)^T\| = \|S^k (A A^T) {S^k}^T\| \geq \sigma(AA^T, n - m + 1).
\end{equation}
Now we use the fact that $S^k A(S^k A)^T$ and $(S^k A)^TS^k A$ share the same non-zero eigenvalues, and meanwhile $AA^T$ and $A^TA$ also shares the same non-zero eigenvalues, we can have the following bound:
\begin{equation}
    \|(S^k A)^TS^k A\| = \|S^k A(S^k A)^T\| \geq \sigma(AA^T, n - m + 1) = \sigma(A^TA, n - m + 1).
\end{equation}
Then by the definition of $\Upsilon(A, \Bar{S}, K) $ we can obtain the upper bound.
\end{proof}

\section{The proof of Theorem \ref{Thm_rand par}}\label{Ad7}

We now present the proof of Theorem \ref{Thm_rand par}:

\begin{proof}
Suppose we randomly permute the index $[n]$ and generate the partition index $[S_1, S_2, ..., S_K]$. If we pick arbitrarily a number $k \in [K]$ where $S^k$ is the subsampling matrix, by the Matrix Chernoff inequality \cite{tropp2012user} we have the following relationship:
\begin{equation}
    \|A^T{S^k}^T S^k A\| \leq (1 + \delta_0) \cdot \frac{\|A\|^2}{K},
\end{equation}
with probability at least:
\begin{equation}
   P := 1 - d \cdot \left[ \frac{e^{\delta_0}}{ (\delta_0 + 1)^{\delta_0 + 1}} \right]^{\frac{\|A\|^2}{K \|A^T\|_{1 \rightarrow 2}^2}},
\end{equation}
 for any $\delta_0 > 0$. Now by choosing $\delta_0 = \delta \cdot \frac{K\|A^T\|_{1 \rightarrow 2}}{\|A\|^2}$ we can have following:
\begin{equation}
    \|A^T{S^k}^T S^k A\| \leq (1 + \delta \cdot \frac{K\|A^T\|_{1 \rightarrow 2}}{\|A\|^2}) \cdot \frac{\|A\|^2}{K} = \frac{\|A\|^2}{K} + \delta \|A^T\|_{1 \rightarrow 2}^2,
\end{equation}
with probability at least $P'$ where:
\begin{equation}
  1 - d \cdot \left[ \frac{e^\delta}{ (\delta\cdot \frac{K\|A^T\|_{1 \rightarrow 2}}{\|A\|^2})^{\delta}} \right] \geq P' := 1 - d \cdot \left[ \frac{e^\delta}{ \delta^{\delta}} \right].
\end{equation}
(This is because we restrict here $K \geq \frac{\|A\|^2}{\|A^T\|_{1 \rightarrow 2}^2}$.) Now by applying the union bound over all possible choices of $k$ and since we assume here $K \leq \min(n, d)$, we have, with probability at least $ 1 - d^2 \cdot \left[ \frac{e^\delta}{ \delta^{\delta}} \right]$:
\begin{equation}
    \max_{k \in [K]}\|A^T{S^k}^T S^k A\| \leq \frac{\|A\|^2}{K} + \delta \|A^T\|_{1 \rightarrow 2}^2.
\end{equation}
Then by definition:
\begin{equation}
    \Upsilon(A, \Bar{S}, K) = \frac{KL_f}{L_b} \geq \frac{\frac{K}{n}\|A\|^2}{\frac{K}{n}\left(\frac{\|A\|^2}{K} + \delta \|A^T\|_{1 \rightarrow 2}^2 \right)} = \frac{1}{\frac{1}{K} + \delta \frac{\|A^T\|_{1 \rightarrow 2}^2}{\|A\|^2}}
\end{equation}
Now since
\begin{equation}
   \|A^T\|_{1 \rightarrow 2}^2 = \max_{i \in [n]} \|a_i\|_2^2\leq \rho \cdot \frac{1}{n}\sum_{j = 1}^n\|a_j\|_2^2 = \rho \cdot \frac{\sum_{i = 1}^d \sigma(A^TA, i)}{n},
\end{equation}
we have:
\begin{equation}
    \Upsilon(A, \Bar{S}, K) \geq \frac{1}{\frac{1}{K} + \delta \frac{\|A^T\|_{1 \rightarrow 2}^2}{\|A\|^2}} \geq \frac{1}{\frac{1}{K} + \delta \rho \cdot\frac{\frac{\sum_{i = 1}^d \sigma(A^TA, i)}{n}}{\sigma(A^TA, 1)}}.
\end{equation}
Thus finishes the proof. 
\end{proof}

\section*{Acknowledgment}

We acknowledge the support from H2020-MSCA-ITN 642685 (MacSeNet) , ERC Advanced grant 694888, C-SENSE and a Royal Society Wolfson Research Merit Award. We thank Alessandro Foi, Vladimir Katkovnik, Cristovao Cruz, Enrique Sanchez-Monge, Zhongwei Xu, Jingwei Liang, Derek Driggs, Alessandro Perelli, Mikey Sheehan and Jonathan Mason for helpful discussions.

\bibliographystyle{siamplain}
\bibliography{main.bib}

\begin{thebibliography}{10}

\bibitem{afonso2010fast}
{\sc M.~V. Afonso, J.~M. Bioucas-Dias, and M.~A. Figueiredo}, {\em Fast image
  recovery using variable splitting and constrained optimization}, IEEE
  transactions on image processing, 19 (2010), pp.~2345--2356.

\bibitem{agarwal2010fast}
{\sc A.~Agarwal, S.~Negahban, and M.~J. Wainwright}, {\em Fast global
  convergence rates of gradient methods for high-dimensional statistical
  recovery}, in Advances in Neural Information Processing Systems, 2010,
  pp.~37--45.

\bibitem{agarwal2012fast}
{\sc A.~Agarwal, S.~Negahban, and M.~J. Wainwright}, {\em Fast global
  convergence rates of gradient methods for high-dimensional statistical
  recovery}, The Annals of Statistics, 40 (2012), pp.~2452--2482.

\bibitem{allen2016katyusha}
{\sc Z.~Allen-Zhu}, {\em Katyusha: The first direct acceleration of stochastic
  gradient methods}, in Proceedings of the 49th Annual ACM SIGACT Symposium on
  Theory of Computing, ACM, 2017, pp.~1200--1205.

\bibitem{allen2017katyusha}
{\sc Z.~Allen-Zhu}, {\em Katyusha: The first direct acceleration of stochastic
  gradient methods}, The Journal of Machine Learning Research, 18 (2017),
  pp.~8194--8244.

\bibitem{allen2018katyusha}
{\sc Z.~Allen-Zhu}, {\em Katyusha x: Practical momentum method for stochastic
  sum-of-nonconvex optimization}, arXiv preprint arXiv:1802.03866,  (2018).

\bibitem{almeida2013deconvolving}
{\sc M.~S. Almeida and M.~Figueiredo}, {\em Deconvolving images with unknown
  boundaries using the alternating direction method of multipliers}, IEEE
  Transactions on Image processing, 22 (2013), pp.~3074--3086.

\bibitem{almeida2013blind}
{\sc M.~S. Almeida and M.~A. Figueiredo}, {\em Blind image deblurring with
  unknown boundaries using the alternating direction method of multipliers}, in
  Image Processing (ICIP), 2013 20th IEEE International Conference on, IEEE,
  2013, pp.~586--590.

\bibitem{basu2000n}
{\sc S.~Basu and Y.~Bresler}, {\em O$(n^2 \log_2 n)$ filtered backprojection
  reconstruction algorithm for tomography}, IEEE Transactions on Image
  Processing, 9 (2000), pp.~1760--1773.

\bibitem{beck2009fast}
{\sc A.~Beck and M.~Teboulle}, {\em Fast gradient-based algorithms for
  constrained total variation image denoising and deblurring problems}, IEEE
  Transactions on Image Processing, 18 (2009), pp.~2419--2434.

\bibitem{bottou2010large}
{\sc L.~Bottou}, {\em Large-scale machine learning with stochastic gradient
  descent}, in Proceedings of COMPSTAT'2010, Springer, 2010, pp.~177--186.

\bibitem{bubeck2015convex}
{\sc S.~Bubeck}, {\em Convex optimization: Algorithms and complexity},
  Foundations and Trends{\textregistered} in Machine Learning, 8 (2015),
  pp.~231--357.

\bibitem{chambolle2018stochastic}
{\sc A.~Chambolle, M.~J. Ehrhardt, P.~Richt{\'a}rik, and C.-B. Schönlieb},
  {\em Stochastic primal-dual hybrid gradient algorithm with arbitrary sampling
  and imaging applications}, SIAM Journal on Optimization, 28 (2018),
  pp.~2783--2808.

\bibitem{chambolle2011first}
{\sc A.~Chambolle and T.~Pock}, {\em A first-order primal-dual algorithm for
  convex problems with applications to imaging}, Journal of mathematical
  imaging and vision, 40 (2011), pp.~120--145.

\bibitem{chambolle2016introduction}
{\sc A.~Chambolle and T.~Pock}, {\em An introduction to continuous optimization
  for imaging}, Acta Numerica, 25 (2016), pp.~161--319.

\bibitem{chambolle2016ergodic}
{\sc A.~Chambolle and T.~Pock}, {\em On the ergodic convergence rates of a
  first-order primal--dual algorithm}, Mathematical Programming, 159 (2016),
  pp.~253--287.

\bibitem{2012_Chandrasekaran_Convex}
{\sc V.~Chandrasekaran, B.~Recht, P.~A. Parrilo, and A.~S. Willsky}, {\em The
  convex geometry of linear inverse problems}, Foundations of Computational
  mathematics, 12 (2012), pp.~805--849.

\bibitem{combettes2004image}
{\sc P.~L. Combettes and J.-C. Pesquet}, {\em Image restoration subject to a
  total variation constraint}, IEEE transactions on image processing, 13
  (2004), pp.~1213--1222.

\bibitem{davis2017three}
{\sc D.~Davis and W.~Yin}, {\em A three-operator splitting scheme and its
  optimization applications}, Set-valued and variational analysis, 25 (2017),
  pp.~829--858.

\bibitem{defazio2016simple}
{\sc A.~Defazio}, {\em A simple practical accelerated method for finite sums},
  in Advances in Neural Information Processing Systems 29, D.~D. Lee,
  M.~Sugiyama, U.~V. Luxburg, I.~Guyon, and R.~Garnett, eds., Curran
  Associates, Inc., 2016, pp.~676--684.

\bibitem{defazio2014saga}
{\sc A.~Defazio, F.~Bach, and S.~Lacoste-Julien}, {\em Saga: A fast incremental
  gradient method with support for non-strongly convex composite objectives},
  in Advances in Neural Information Processing Systems, 2014, pp.~1646--1654.

\bibitem{erdogan1999ordered}
{\sc H.~Erdogan and J.~A. Fessler}, {\em Ordered subsets algorithms for
  transmission tomography}, Physics in Medicine \& Biology, 44 (1999), p.~2835.

\bibitem{fadili2010total}
{\sc J.~M. Fadili and G.~Peyr{\'e}}, {\em Total variation projection with first
  order schemes}, IEEE Transactions on Image Processing, 20 (2010),
  pp.~657--669.

\bibitem{gazagnadou2019optimal}
{\sc N.~Gazagnadou, R.~Gower, and J.~Salmon}, {\em Optimal mini-batch and step
  sizes for saga}, in International Conference on Machine Learning, 2019,
  pp.~2142--2150.

\bibitem{pmlr-v97-qian19b}
{\sc R.~M. Gower, N.~Loizou, X.~Qian, A.~Sailanbayev, E.~Shulgin, and
  P.~Richt{\'a}rik}, {\em {SGD}: General analysis and improved rates}, in
  Proceedings of the 36th International Conference on Machine Learning, vol.~97
  of Proceedings of Machine Learning Research, Long Beach, California, USA,
  09--15 Jun 2019, PMLR, pp.~5200--5209.

\bibitem{gower2018stochastic}
{\sc R.~M. Gower, P.~Richt{\'a}rik, and F.~Bach}, {\em Stochastic
  quasi-gradient methods: Variance reduction via jacobian sketching}, arXiv
  preprint arXiv:1805.02632,  (2018).

\bibitem{hansen2012air}
{\sc P.~C. Hansen and M.~Saxild-Hansen}, {\em {AIR} tools—a {MATLAB} package
  of algebraic iterative reconstruction methods}, Journal of Computational and
  Applied Mathematics, 236 (2012), pp.~2167--2178.

\bibitem{horn2012matrix}
{\sc R.~A. Horn and C.~R. Johnson}, {\em Matrix analysis}, Cambridge university
  press, 2012.

\bibitem{johnson2013accelerating}
{\sc R.~Johnson and T.~Zhang}, {\em Accelerating stochastic gradient descent
  using predictive variance reduction}, in Advances in neural information
  processing systems, 2013, pp.~315--323.

\bibitem{kamilov2017plug}
{\sc U.~S. Kamilov, H.~Mansour, and B.~Wohlberg}, {\em A plug-and-play priors
  approach for solving nonlinear imaging inverse problems}, IEEE Signal
  Processing Letters, 24 (2017), pp.~1872--1876.

\bibitem{karimi2016hybrid}
{\sc D.~Karimi and R.~K. Ward}, {\em A hybrid stochastic-deterministic gradient
  descent algorithm for image reconstruction in cone-beam computed tomography},
  Biomedical Physics \& Engineering Express, 2 (2016), p.~015008.

\bibitem{karimi2017sparse}
{\sc D.~Karimi and R.~K. Ward}, {\em Sparse-view image reconstruction in
  cone-beam computed tomography with variance-reduced stochastic gradient
  descent and locally-adaptive proximal operation}, Journal of Medical and
  Biological Engineering, 37 (2017), pp.~420--440.

\bibitem{kim2015combining}
{\sc D.~Kim, S.~Ramani, and J.~A. Fessler}, {\em Combining ordered subsets and
  momentum for accelerated x-ray ct image reconstruction}, IEEE transactions on
  medical imaging, 34 (2015), pp.~167--178.

\bibitem{kodim}
{\sc Kodak-Lossless-True-Color-Image-Suite},
  \url{http://r0k.us/graphics/kodak/}.

\bibitem{lan2015optimal}
{\sc G.~Lan and Y.~Zhou}, {\em An optimal randomized incremental gradient
  method}, arXiv preprint arXiv:1507.02000,  (2015).

\bibitem{liang2018improving}
{\sc J.~Liang and C.-B. Sch{\"o}nlieb}, {\em Improving {FISTA}: Faster, smarter
  and greedier}, arXiv preprint arXiv:1811.01430,  (2018).

\bibitem{lin2015universal}
{\sc H.~Lin, J.~Mairal, and Z.~Harchaoui}, {\em A universal catalyst for
  first-order optimization}, in Advances in Neural Information Processing
  Systems, 2015, pp.~3384--3392.

\bibitem{lions1979splitting}
{\sc P.-L. Lions and B.~Mercier}, {\em Splitting algorithms for the sum of two
  nonlinear operators}, SIAM Journal on Numerical Analysis, 16 (1979),
  pp.~964--979.

\bibitem{murata2017doubly}
{\sc T.~Murata and T.~Suzuki}, {\em Doubly accelerated stochastic variance
  reduced dual averaging method for regularized empirical risk minimization},
  in Advances in Neural Information Processing Systems, 2017, pp.~608--617.

\bibitem{needell2014paved}
{\sc D.~Needell and J.~A. Tropp}, {\em Paved with good intentions: analysis of
  a randomized block kaczmarz method}, Linear Algebra and its Applications, 441
  (2014), pp.~199--221.

\bibitem{negahban2012unified}
{\sc S.~N. Negahban, P.~Ravikumar, M.~J. Wainwright, and B.~Yu}, {\em A unified
  framework for high-dimensional analysis of m-estimators with decomposable
  regularizers}, Statistical Science,  (2012), pp.~538--557.

\bibitem{nemirovski1978cesari}
{\sc A.~S. Nemirovski and D.~B. Yudin}, {\em Cesari convergence of the gradient
  method of approximating saddle points of convex-concave functions}, in
  Doklady Akademii Nauk, vol.~239, Russian Academy of Sciences, 1978,
  pp.~1056--1059.

\bibitem{nemirovsky1983problem}
{\sc A.~S. Nemirovsky and D.~B. Yudin}, {\em Problem complexity and method
  efficiency in optimization.},  (1983).

\bibitem{nesterov1983method}
{\sc Y.~Nesterov}, {\em A method of solving a convex programming problem with
  convergence rate o (1/k2)}, in Soviet Mathematics Doklady, vol.~27, 1983,
  pp.~372--376.

\bibitem{nesterov2007gradient}
{\sc Y.~Nesterov}, {\em Gradient methods for minimizing composite objective
  function}, tech. report, UCL, 2007.

\bibitem{nesterov2013introductory}
{\sc Y.~Nesterov}, {\em Introductory lectures on convex optimization: A basic
  course}, vol.~87, Springer Science \& Business Media, 2013.

\bibitem{nitanda2014stochastic}
{\sc A.~Nitanda}, {\em Stochastic proximal gradient descent with acceleration
  techniques}, in Advances in Neural Information Processing Systems, 2014,
  pp.~1574--1582.

\bibitem{ouyang2013stochastic}
{\sc H.~Ouyang, N.~He, L.~Tran, and A.~Gray}, {\em Stochastic alternating
  direction method of multipliers}, in International Conference on Machine
  Learning, 2013, pp.~80--88.

\bibitem{2015_Oymak_Sharp}
{\sc S.~Oymak, B.~Recht, and M.~Soltanolkotabi}, {\em Sharp time--data
  tradeoffs for linear inverse problems}, arXiv preprint arXiv:1507.04793,
  (2015).

\bibitem{perraudin2014unlocbox}
{\sc N.~Perraudin, V.~Kalofolias, D.~Shuman, and P.~Vandergheynst}, {\em
  Unlocbox: A matlab convex optimization toolbox for proximal-splitting
  methods}, arXiv preprint arXiv:1402.0779,  (2014).

\bibitem{reehorst2018regularization}
{\sc E.~T. Reehorst and P.~Schniter}, {\em Regularization by denoising:
  Clarifications and new interpretations}, IEEE Transactions on Computational
  Imaging, 5 (2018), pp.~52--67.

\bibitem{robbins1951stochastic}
{\sc H.~Robbins and S.~Monro}, {\em A stochastic approximation method}, The
  Annals of Mathematical Statistics, 22 (1951), pp.~400--407.

\bibitem{romano2017little}
{\sc Y.~Romano, M.~Elad, and P.~Milanfar}, {\em The little engine that could:
  Regularization by denoising (red)}, SIAM Journal on Imaging Sciences, 10
  (2017), pp.~1804--1844.

\bibitem{rosasco2014convergence}
{\sc L.~Rosasco, S.~Villa, and B.~C. V{\~u}}, {\em Convergence of stochastic
  proximal gradient algorithm}, arXiv preprint arXiv:1403.5074,  (2014).

\bibitem{schmidt2013minimizing}
{\sc M.~Schmidt, N.~Le~Roux, and F.~Bach}, {\em Minimizing finite sums with the
  stochastic average gradient}, Mathematical Programming,  (2013), pp.~1--30.

\bibitem{sun2019block}
{\sc Y.~Sun, J.~Liu, and U.~S. Kamilov}, {\em Block coordinate regularization
  by denoising}, arXiv preprint arXiv:1905.05113,  (2019).

\bibitem{sun2019online}
{\sc Y.~Sun, B.~Wohlberg, and U.~S. Kamilov}, {\em An online plug-and-play
  algorithm for regularized image reconstruction}, IEEE Transactions on
  Computational Imaging,  (2019).

\bibitem{tang2019limitation}
{\sc J.~Tang, K.~Egiazarian, and M.~Davies}, {\em The limitation and practical
  acceleration of stochastic gradient algorithms in inverse problems}, in
  ICASSP 2019-2019 IEEE International Conference on Acoustics, Speech and
  Signal Processing (ICASSP), IEEE, 2019, pp.~7680--7684.

\bibitem{tang2017exploiting}
{\sc J.~Tang, M.~Golbabaee, and M.~Davies}, {\em Exploiting the structure via
  sketched gradient algorithms}, in 2017 IEEE Global Conference on Signal and
  Information Processing (GlobalSIP), IEEE, 2017, pp.~1305--1309.

\bibitem{tropp2004greed}
{\sc J.~A. Tropp}, {\em Greed is good: Algorithmic results for sparse
  approximation}, IEEE Transactions on Information theory, 50 (2004),
  pp.~2231--2242.

\bibitem{tropp2012user}
{\sc J.~A. Tropp}, {\em User-friendly tail bounds for sums of random matrices},
  Foundations of computational mathematics, 12 (2012), pp.~389--434.

\bibitem{ulas2017robust}
{\sc C.~Ulas, P.~A. Gomez, J.~I. Sperl, C.~Preibisch, M.~I. Menzel, A.~Haase,
  and B.~H. Menze}, {\em A robust reconstruction method for quantitative
  perfusion mri: Application to brain dynamic susceptibility contrast (dsc)
  imaging}.

\bibitem{ulas2017accelerated}
{\sc C.~Ulas, C.~Preibisch, J.~Sperl, T.~Pyka, J.~Kalpathy-Cramer, and
  B.~Menze}, {\em Accelerated reconstruction of perfusion-weighted mri
  enforcing jointly local and nonlocal spatio-temporal constraints}, arXiv
  preprint arXiv:1708.07808,  (2017).

\bibitem{venkatakrishnan2013plug}
{\sc S.~V. Venkatakrishnan, C.~A. Bouman, and B.~Wohlberg}, {\em Plug-and-play
  priors for model based reconstruction}, in 2013 IEEE Global Conference on
  Signal and Information Processing, IEEE, 2013, pp.~945--948.

\bibitem{whyte2012non}
{\sc O.~Whyte, J.~Sivic, A.~Zisserman, and J.~Ponce}, {\em Non-uniform
  deblurring for shaken images}, International journal of computer vision, 98
  (2012), pp.~168--186.

\bibitem{woodworth2016tight}
{\sc B.~E. Woodworth and N.~Srebro}, {\em Tight complexity bounds for
  optimizing composite objectives}, in Advances in neural information
  processing systems, 2016, pp.~3639--3647.

\bibitem{xiao2014proximal}
{\sc L.~Xiao and T.~Zhang}, {\em A proximal stochastic gradient method with
  progressive variance reduction}, SIAM Journal on Optimization, 24 (2014),
  pp.~2057--2075.

\bibitem{zhao2018stochastic}
{\sc R.~Zhao and V.~Cevher}, {\em Stochastic three-composite convex
  minimization with a linear operator}, in International Conference on
  Artificial Intelligence and Statistics, 2018, pp.~765--774.

\bibitem{zhao2019optimal}
{\sc R.~Zhao, W.~B. Haskell, and V.~Y. Tan}, {\em An optimal algorithm for
  stochastic three-composite optimization}, in The 22nd International
  Conference on Artificial Intelligence and Statistics, 2019, pp.~428--437.

\bibitem{zhong2014accelerated}
{\sc W.~Zhong and J.~Kwok}, {\em Accelerated stochastic gradient method for
  composite regularization}, in Artificial Intelligence and Statistics, 2014,
  pp.~1086--1094.

\bibitem{zhou2018simple}
{\sc K.~Zhou, F.~Shang, and J.~Cheng}, {\em A simple stochastic variance
  reduced algorithm with fast convergence rates}, in International Conference
  on Machine Learning, 2018, pp.~5975--5984.

\end{thebibliography}

\end{document}